\theoremstyle{plain}
\newtheorem{theorem}{Theorem}[section]
\newtheorem{proposition}[theorem]{Proposition}
\newtheorem{lemma}[theorem]{Lemma}
\newtheorem{corollary}[theorem]{Corollary}
\theoremstyle{definition}
\newtheorem{example}[theorem]{Example}
\newtheorem{question}[theorem]{Question}
\theoremstyle{remark}
\newtheorem{remark}[theorem]{Remark}
\newcommand{\mfp}{\mathfrak{p}}
\newcommand{\mfP}{\mathfrak{P}}
\newcommand{\mfq}{\mathfrak{q}}
\newcommand{\mfQ}{\mathfrak{Q}}
\newcommand{\mcS}{\mathcal{S}}
\newcommand{\Br}{\mathrm{Br}}
\newcommand{\inv}{\mathrm{inv}}
\DeclareMathOperator{\res}{res}
\title[A $p$-adic analogue of Siegel's theorem on sums of squares]{A $p$-adic analogue of Siegel's theorem on sums of squares}
\author{Sylvy Anscombe, Philip Dittmann, and Arno Fehm}
\address{Jeremiah Horrocks Institute, University of Central Lancashire, Preston PR1 2HE, United Kingdom}
\email{sanscombe@uclan.ac.uk}
\address{Afdeling Algebra, KU Leuven, Celestijnenlaan 200b, 3001 Leuven, Belgium}
\email{philip.dittmann@kuleuven.be}
\address{Institut f\"{u}r Algebra, Technische Universit\"{a}t Dresden, 01062 Dresden}
\email{arno.fehm@tu-dresden.de}
\thanks{\today}
\begin{document}
\begin{abstract}
Siegel proved that every totally positive element
of a number field $K$ is the sum of four squares,
so in particular the Pythagoras number is uniformly bounded across number fields.
The $p$-adic Kochen operator provides a $p$-adic analogue of squaring,
and a certain localisation of the ring generated by this operator consists of precisely the totally $p$-integral elements of $K$.
We use this to formulate and prove a $p$-adic analogue of Siegel's theorem, by introducing the $p$-Pythagoras number of a general field, and showing that this number is uniformly bounded across number fields. 
We also generally study fields with finite $p$-Pythagoras number
and show that the growth of the $p$-Pythagoras number in finite extensions is bounded.
\end{abstract}
\maketitle

\section{Introduction}

The study of sums of squares has a long history.
In the context of the integers, Fermat, Euler, Lagrange and many others studied which integers are a sum of a certain number of square integers.
The possibly most famous result in this direction is Lagrange's Four Squares Theorem \cite[Theorem 369]{HW79}
that every non-negative integer is the sum of four squares.
In fact, earlier Euler had proved a version of this theorem for $\mathbb{Q}$: every non-negative rational number is the sum of four square rational numbers.
A comprehensive history of these theorems may be found in \cite[Chapter VIII]{Dic20}.
In the other direction, for both $\mathbb{Z}$ and $\mathbb{Q}$ there exist non-negative numbers that cannot be written as a sum of three squares.
The {\bf Pythagoras number} $\pi(F)$
of a field $F$
is the smallest $n$ such that
\begin{align*}
\big\{x_{1}^{2}+...+x_{m}^{2}\;\big|\;x_{1},...,x_{m}\in F, m\in\mathbb{N}\big\}&=\big\{x_{1}^{2}+...+x_{n}^{2}\;\big|\;x_{1},...,x_{n}\in F\big\}.
\end{align*}
Using this terminology, Euler's theorem becomes the statement that $\pi(\mathbb{Q})=4$.
The following generalization of Euler's theorem 
was conjectured by Hilbert 
and proven by Siegel in \cite{Sie21}, cf.~\cite[Chapter 7, \S1, 1.4]{Pfi95}:

\begin{theorem}[Siegel]
For all number fields $F$, $\pi(F)\leq4$.
\end{theorem}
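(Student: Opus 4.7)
The plan is to apply the Hasse--Minkowski local-global principle for quadratic forms. The key reformulation is that a non-zero element $\alpha\in F$ is a sum of four squares in $F$ if and only if the five-dimensional quadratic form $q_\alpha := \langle 1,1,1,1,-\alpha\rangle$ is isotropic over $F$. Indeed, from a non-trivial isotropic vector $(x_1,\ldots,x_4,y)$ one recovers $\alpha = \sum_{i=1}^4 (x_i/y)^2$ when $y\neq 0$; and when $y=0$ the form $\langle 1,1,1,1\rangle$ is itself isotropic, hence universal over $F$, and so represents $\alpha$ in any case.

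First I would reduce to the totally positive case: any non-zero sum of squares in $F$ is strictly positive under every real embedding (since real embeddings are injective into $\mathbb{R}$), while $0$ is trivially a sum of four squares. By Hasse--Minkowski it then suffices to prove that $q_\alpha$ is isotropic over every completion $F_v$ of $F$, which I would check place by place. At a complex place, isotropy of any form of dimension $\geq 2$ is automatic. At a real place corresponding to $\sigma\colon F\hookrightarrow\mathbb{R}$, total positivity gives $\sigma(\alpha)>0$, so $q_\alpha\otimes_F\mathbb{R}$ has signature $(4,1)$ and is isotropic. At a non-archimedean place, isotropy follows from the classical fact that every quadratic form of dimension $\geq 5$ over a $p$-adic local field is isotropic, equivalently that the $u$-invariant $u(F_v)$ equals $4$.

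The principal technical obstacle is the non-archimedean local input, which rests on a detailed analysis of the square-class group $F_v^\times/(F_v^\times)^2$ together with the Hilbert symbol. Places above $2$ need extra care, since the square-class group there has order $8$ rather than $4$ and the Hilbert symbol behaves less uniformly. The remainder of the argument consists of formal manipulations of quadratic forms together with a standard application of Hasse--Minkowski, so once the local $u$-invariant bound is in hand the global statement follows quickly.
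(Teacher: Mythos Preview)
Your proof is correct and is the standard modern argument via Hasse--Minkowski. Note, however, that the paper does not actually supply a proof of this theorem: it is stated in the introduction as a classical result of Siegel, with a citation to \cite{Sie21} and to \cite[Chapter~7, \S1, 1.4]{Pfi95}, and serves only as motivation for the $p$-adic analogue developed later. So there is no ``paper's own proof'' to compare against; your write-up is precisely the argument one finds in the cited reference in Pfister's book.
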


The study of the Pythagoras number of a field is intimately related to the study of the orderings on that field,
since by a theorem of Artin and Schreier 
the sums of squares 
are precisely the totally positive elements.
In a number field $F$, 
these can be described simply as those elements
that are mapped to $\mathbb{R}_{\geq0}$
by every embedding of $F$ into $\mathbb{R}$,
cf.~\cite[Ch.~3 and 7]{Pfi95}.

We define and study a $p$-adic version of the Pythagoras number, namely the {\bf $p$-Pythagoras number} $\pi_{p}(F)$ of a field $F$, or more generally the $(\mfp,\tau)$-Pythagoras number,
see Section \ref{subsection:p-Pythagoras} for the definition.
Just like the Pythagoras number gives information on the set of totally positive elements, the $p$-Pythagoras number relates to the set of totally $p$-integral elements,
which in a number field $F$ can be described simply as those elements
that are mapped to $\mathbb{Z}_p$ by every embedding of $F$ into $\mathbb{Q}_p$.
Our main result is an inexplicit analogue of Siegel's theorem:

\begin{theorem}\label{thm:main.1}
Let $p$ be a prime number.
There exists $N_{p}\in\mathbb{N}$ such that 
$\pi_{p}(F)\leq N_{p}$
for every number field $F$.
\end{theorem}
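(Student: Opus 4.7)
The plan is to follow the classical Hasse--Minkowski strategy for Siegel's theorem, translating each ingredient into the $p$-adic setting via the Kochen operator. In the classical case one reduces the global bound $\pi(F)\leq 4$ to (i) a uniform local bound ($\pi(\mathbb{R})\leq 4$, with non-archimedean places being trivial) together with (ii) the Hasse--Minkowski local-global principle for sums of squares. In our setting, (i) should be replaced by a uniform bound on the local $p$-Pythagoras numbers of the completions $F_v$ at places $v\mid p$, with places $v\nmid p$ and archimedean places being vacuous; and (ii) by an analogous local-global principle for Kochen sums.

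First I would establish the local analysis. At archimedean places and at places $v\nmid p$, the totally $p$-integral condition collapses at the completion, so the local $p$-Pythagoras number is trivial. At places $v\mid p$ the completion $F_v$ is a finite extension of $\mathbb{Q}_p$, and the totally $p$-integral elements of $F_v$ form its valuation ring $\mathcal{O}_v$. Since (after the appropriate localisation) the Kochen ring coincides with the ring of totally $p$-integral elements, every element of $\mathcal{O}_v$ is already a Kochen sum of some finite length; the content of the local claim is that this length is bounded in terms of $p$ alone, independent of $[F_v:\mathbb{Q}_p]$.

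Second I would carry out the global-to-local step: if $a\in F$ is totally $p$-integral and admits a Kochen representation of length $N$ at each completion $F_v$ with $v\mid p$, then $a$ admits such a representation globally, with length bounded in terms of $N$ and $p$. I expect this to reduce to a strong-approximation or Brauer-group computation in the Kochen setting, analogous to the role Hasse--Minkowski plays for sums of squares; concretely, one should control the failure of local-to-global representability by a finite cohomological obstruction that is itself bounded uniformly.

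The main obstacle is the uniformity of the local bound at places above $p$. Since $[F:\mathbb{Q}]$ is unbounded across number fields, so is $[F_v:\mathbb{Q}_p]$, and classically this step is trivial only because $\pi(\mathbb{R})=4$ is a single number. Here one must genuinely bound $\pi_p(K)$ over all finite extensions $K/\mathbb{Q}_p$. My first attempt would be an Ax--Kochen--Ershov-type compactness argument in the theory of Henselian $p$-valued fields, exploiting the fact that ``being a sum of $N$ Kochen terms'' is a first-order condition uniformly across such $K$; failing that, I would try to use the bounded growth of the $p$-Pythagoras number in finite extensions (announced in the abstract) together with a reduction to a base case at $\mathbb{Q}_p$ itself. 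The global-to-local step should then be comparatively formal once the right cohomological framework is in place.
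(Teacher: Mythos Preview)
Your proposal misidentifies where the difficulty lies. The local step you worry about is in fact trivial: by definition $\pi_p$ uses $\tau=(1,1)$, so $R_p(F)$ is the intersection of $\mathcal{O}_\mfP$ only over those primes $\mfP$ of $F$ above $p$ that are unramified of residue degree~$1$. The completion at every such prime is $\mathbb{Q}_p$ itself, regardless of $[F:\mathbb{Q}]$, and $\pi_p(\mathbb{Q}_p)=1$. Your assertion that ``the totally $p$-integral elements of $F_v$ form $\mathcal{O}_v$'' for every $v\mid p$ is therefore wrong---at a prime not of type $(1,1)$ one has $R_p(F_v)=F_v$---and there is no ``unbounded $[F_v:\mathbb{Q}_p]$'' obstacle to overcome. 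The bounded-growth result you mention as a fallback (Theorem~\ref{thm:p_pythagoras_in_fin_extn}) would not help anyway, since its bound depends on $[E:F]$ and so cannot give a uniform $N_p$ across all number fields.

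The genuine difficulty is the step you pass over lightly: a local--global principle for representability by Kochen expressions of bounded length. No such principle is known, and your suggestion that it ``should reduce to a strong-approximation or Brauer-group computation'' is a hope rather than an argument. The paper circumvents this entirely. Rather than study Kochen sums directly, it uses the Albert--Brauer--Hasse--Noether theorem to construct two central simple algebras $A,B$ over $\mathbb{Q}$ (Proposition~\ref{prp:AB.1}) and from them a single diophantine family $D=D_{\mfp,t_\mfp,A,B}^\tau$ satisfying $D(L)=R_p(L)$ for every number field $L$ (Proposition~\ref{prp:D.property.1}) and $D(\mathbb{Q}_p)\subseteq\mathbb{Z}_p$ (Proposition~\ref{prp:D.property.2}). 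A model-theoretic compactness argument (Proposition~\ref{prop:compactness_bounded_pi}) then yields the uniform $N_p$. The idea missing from your plan is that one should replace the Kochen description of $R_p$ by an \emph{a priori unrelated} diophantine description for which a local--global principle---namely the one for Brauer classes of central simple algebras---is already available.
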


This result will be deduced from the more general Theorem \ref{thm:main.2}.
We also give some general results on fields $F$ with finite $(\mfp,\tau)$-Pythagoras number
and prove in Theorem \ref{thm:p_pythagoras_in_fin_extn} that the growth of the $(\mfp,\tau)$-Pythagoras number is bounded in finite extensions.
As an application, we show
in Corollary \ref{cor:holomorphy.diophantine}
that for every open-closed subset of the $p$-adic spectrum of $F$, the associated holomorphy ring is diophantine.
A further application 
can be found in the forthcoming work \cite{ADF3}, 
in which we 
use the results of this paper to
show that 
rings of formal power series over number fields are $\mathbb{Z}$-diophantine in their quotient fields.

\section{The \texorpdfstring{$(\mfp,\tau)$}{(p,τ)}-Pythagoras number}
\label{sec:ppi}

\subsection{\texorpdfstring{$p$}{p}-valuations}
\label{subsection:p-valuations}

A (Krull) valuation $v$ on a field $F$ is a {\bf $p$-valuation} 
if it has a finite residue field $\bar{F}_v$ of characteristic $p$
and value group $v(F^\times)$ such that
the interval $(0,v(p)]$ is finite.
A (finite) {\bf prime} $\mfP$ of a field $F$ 
is an equivalence class of $p$-valuations on $F$
(for the usual notion of equivalence of valuations), 
for some prime number $p$.
We write $v_{\mfP}$ for a representative of $\mfP$
which has $\mathbb{Z}$ as smallest non-trivial convex subgroup of the value group.
See \cite{PR84} for basics regarding $p$-valuations,
and \cite{Feh13} for details on this notion of prime
and some of the following definitions.

\begin{example}
The primes of a number field $K$ correspond precisely to the finite places in the usual sense and we will identify them.
If $K=\mathbb{Q}$ and $p$ is a prime number then $v_{p}$ denotes the usual $p$-adic valuation,
and we denote the corresponding prime also by $p$.
\end{example}

For the rest of this work
we fix a triple
$(K,\mfp,\tau)$,
where $K$ is a number field,
$\mfp$ is a finite prime of $K$,
and $\tau$ is a pair of natural numbers $(e,f)\in\mathbb{N}^{2}$.
We denote by $t_{\mfp}$ a uniformizer of $v_{\mfp}$,
i.e.\ an element with $v_\mfp(t_\mfp)=1$,
we let $q$ denote 
the size of the residue field $\bar{K}_{v_{\mfp}}$.

For a field extension $F/K$ with $\mfP$ a prime of $F$ lying above $\mfp$,
the {\bf relative initial ramification} is
$e(\mfP|\mfp):=v_\mfP(t_\mfp)$, 
the {\bf relative residue degree} is
$f(\mfP|\mfp):=[\bar{F}_{v_{\mfP}}:\bar{K}_{v_{\mfp}}]$,
and the pair $(e(\mfP|\mfp),f(\mfP|\mfp))$ is the {\bf relative type} of
$\mfP$ over $\mfp$.
We say $\mfP$ is of relative type {\bf at most} $\tau$ if
$e(\mfP|\mfp)$ is no greater than $e$,
and
$f(\mfP|\mfp)$ divides $f$.
Likewise, for $\tau'=(e',f')$ we write $\tau \leq \tau'$ if $e \leq e'$ and $f \mid f'$.
We denote by $\mathcal{S}(F)$ the set of primes of $F$,
by $\mathcal{S}^{*}_{\mfp}(F)\subseteq\mathcal{S}(F)$
the set of those primes $\mfP$ of $F$ lying above $\mfp$,
and by $\mathcal{S}_{\mfp}^{\tau}(F)\subseteq\mathcal{S}_\mfp^*(F)$ the subset of those primes $\mfP$ of $F$ 
 which are of relative type at most $\tau$ over $\mfp$.
The corresponding
{\bf holomorphy ring} is
\begin{align*}
R_{\mfp}^{\tau}(F)&:=\bigcap_{\mfP\in S_{\mfp}^{\tau}(F)}\mathcal{O}_{\mfP},
\end{align*}
where
$\mathcal{O}_{\mfP}$
is the valuation ring of $\mfP$,
and
\begin{align*}
\Gamma_{\mfp}^{\tau}(F)&:=\bigg\{\frac{a}{1+t_{\mfp}b}\;\bigg|\;a,b\in\mathcal{O}_{\mfp}[\gamma_{\mfp, t_\mfp}^{\tau}(F)], 1+t_{\mfp}b \neq 0 \bigg\},
\end{align*}
is the corresponding {\bf Kochen ring}, where
\begin{align*}
    \gamma_{\mfp, t_\mfp}^{\tau}(X)&:=\frac{1}{t_{\mfp}}\cdot\bigg(\frac{X^{q^{f}}-X}{(X^{q^{f}}-X)^{2}-1}\bigg)^{e}
\end{align*}
is the {\bf Kochen operator}.
Here and in what follows, if $\gamma\in F(X)$ is a rational function, we mean by $\gamma(F)$
the image of $\gamma$ on $F\setminus\{\mbox{poles of }\gamma\}$.
Note that $\Gamma_{\mfp}^\tau(F)$ does not depend on the choice of $t_\mfp$, since the quotient of two uniformizers of $v_\mfp$ is an element of $\mathcal{O}_\mfp^\times$.
Recall that 
$R_{\mfp}^{\tau}(F)$
is the integral closure of 
$\Gamma_{\mfp}^{\tau}(F)$,
with equality
in the case $e=1$,
see
\cite[Corollary 6.9]{PR84} and the subsequent discussion for more details.

\begin{example}
If $\mfp$ is any place of the number field $K$,
we denote by $K_\mfp$ the completion of $K$ with respect to $\mfp$.
If $\mfp$ is a finite place,
then $K_\mfp$ is a non-archimedean local field and $\mfp$ extends to a unique prime $\mfP$ of $K_\mfp$ of the same type,
so $R_\mfp^\tau(K_\mfp)=R_\mfp^{(1,1)}(K_\mfp)=\mathcal{O}_\mfP$.
In fact, any non-archimedean local field $E$ of characteristic zero carries a unique prime, whose valuation ring we denote by $\mathcal{O}_E$, cf.~\cite[Theorem 6.15]{PR84}.
We say that an extension of non-archimedean local fields is of relative type
at most $\tau$ if this is true for the respective primes.
\end{example}

\subsection{The \texorpdfstring{$(\mfp,\tau)$}{(p,τ)}-Pythagoras number}
\label{subsection:p-Pythagoras}

Let $F/K$ be an extension.
For $g\in\mathcal{O}_{\mfp}[X_{1},...,X_{n}]$,
we write
\begin{align*}
R_{\mfp,g, t_\mfp}^\tau(F)&:=\bigg\{\frac{a}{1+t_{\mfp}b}\;\bigg|\;a,b\in g(\gamma_{\mfp, t_\mfp}^{\tau}(F),...,\gamma_{\mfp, t_\mfp}^{\tau}(F)), 1+t_{\mfp}b\neq0\bigg\},
\end{align*}
and for $n\geq1$
\begin{align*}
R_{\mfp,g,t_\mfp,n}^{\tau}(F)&:=\bigg\{x\in F\;\bigg|\;\text{$x^{m}+a_{m-1}x^{m-1}+...+a_{0}=0$ with $1\leq m\leq n,a_{0},...,a_{m-1}\in R_{\mfp,g,t_\mfp}^{\tau}(F)$}\bigg\}.
\end{align*}
We denote by $\mathcal{P}_{\mfp,n}$ the finite set of those $g\in\mathcal{O}_{\mfp}[X_{1},...,X_{n}]$ of 
degree and height at most $n$ (cf.~\cite[Def.~1.6.1]{BG}).
We write
\begin{align*}
R_{\mfp,n}^{\tau}(F)&:=\bigcup_{t_\mfp} \bigcup_{g\in\mathcal{P}_{\mfp,n}}R_{\mfp,g,t_\mfp,n}^{\tau}(F),
\end{align*}
where $t_\mfp$ varies over those (finitely many) elements of 
the ring of integers $\mathcal{O}_K$ which are uniformizers for $\mfp$ of minimal height.
Then $\big(R_{\mfp,n}^{\tau}(F)\big)_{n\in\mathbb{N}}$ is an increasing chain of subsets of $F$ and
\begin{align*}
R_{\mfp}^{\tau}(F)&=\bigcup_{n\in\mathbb{N}}R_{\mfp,n}^{\tau}(F).
\end{align*}
The {\bf $(\mfp,\tau)$-Pythagoras number} $\pi_{\mfp}^{\tau}(F)$
of $F$
is the smallest $n$ such that
\begin{align*}
R_{\mfp}^{\tau}(F)&=R_{\mfp,n}^{\tau}(F),
\end{align*}
and we write $\pi_{\mfp}^{\tau}(F)=\infty$ if there is no such $n$.
In other words, 
\begin{align*}
\pi_{\mfp}^{\tau}(F)&:=\inf\bigg\{n\in\mathbb{N}\;\bigg|\;R_{\mfp}^{\tau}(F)=R_{\mfp,n}^{\tau}(F)\bigg\}\in\mathbb{N}\cup\{\infty\}.
\end{align*}
In the case $K=\mathbb{Q}$, $\mfp=p$ and $\tau=(1,1)$, 
we write $R_p(F)$ and $\pi_p(F)$,
omitting the relative type $(1,1)$,
and we speak of the $p$-Pythagoras number.
We also write $\gamma_p:=\gamma_{p,p}^{(1,1)}$,
and note that the only two uniformizers (of the prime $p$) in $\mathbb{Z}$ of minimal height are $p$ and $-p$,
with $\gamma_{p,-p}^{(1,1)}=-\gamma_{p}$.
\begin{example}
Since $\mathbb{C}$ is algebraically closed and also is not formally $p$-adic, we have
\begin{align*}
R_{p}(\mathbb{C})=\mathbb{C}=\gamma_{p}(\mathbb{C}),
\end{align*}
in particular $\pi_{p}(\mathbb{C})=1$.
\end{example}

\begin{example}
It follows easily from Hensel’s lemma that
\begin{align*}
R_{p}(\mathbb{Q}_{p})&=\mathbb{Z}_{p}=\gamma_{p}(\mathbb{Q}_{p}),
\end{align*}
in particular $\pi_{p}(\mathbb{Q}_{p})=1$.
\end{example}

\begin{example}\label{Ex:PpC}
In {\cite[Lemma 3.02]{Gro87}} it is shown that every so-called
pseudo $p$-adically closed field $F$ satisfies
\begin{align*}
R_{p}(F)&=\gamma_{p}(F)+\gamma_{p}(F)+\gamma_{p}(F),
\end{align*}
hence $\pi_{p}(F)\leq3$.
This applies for example to the field $\mathbb{Q}^{{\rm t}p}$ of totally $p$-adic algebraic numbers
by a result of Moret-Bailly \cite{MB}.
\end{example}

There are fields $F$ with $\pi(F)=\infty$, see e.g.~\cite[Theorem 1]{Hof99}.
On the other hand, we do not know if $\pi_{p}(F)=\infty$ for any field:

\begin{question}
Is $\pi_p(\mathbb{Q}(X_1,X_2,\dots))=\infty$?
\end{question}

\subsection{Explicit bounds and uniformity in \texorpdfstring{$p$}{p}}
\label{section:pi.Q}

We now prove a few rather elementary statements about $\pi_p(\mathbb{Q})$.
We will drop the relative type $\tau=(1,1)$ from all notation.
Let $\ell$ be a prime number distinct from $p$.

\begin{lemma}\label{lem:p.l.2}
We have $\gamma_{p}(\mathbb{Q})\subseteq\mathbb{Z}_{(\ell)}$ if and only if
neither $X^{p}-X+1$ nor $X^{p}-X-1$
has a zero in $\mathbb{F}_{\ell}$.
\end{lemma}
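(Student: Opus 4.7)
The plan is to reduce the inclusion $\gamma_p(\mathbb{Q}) \subseteq \mathbb{Z}_{(\ell)}$ to a statement about $\mathbb{F}_\ell$-roots of $X^p - X \pm 1$ by a direct $\ell$-adic valuation computation. In the present setting $K = \mathbb{Q}$, $\mfp = p$, $\tau = (1,1)$, we have $\gamma_p(X) = \frac{X^p - X}{p\bigl((X^p - X)^2 - 1\bigr)}$; setting $y := x^p - x$ for $x \in \mathbb{Q}$ not a pole, this becomes $\gamma_p(x) = \frac{y}{p(y-1)(y+1)}$. Since $\ell \neq p$, we have $v_\ell(p) = 0$, and therefore
\[
v_\ell(\gamma_p(x)) \;=\; v_\ell(y) - v_\ell(y-1) - v_\ell(y+1).
\]

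Next, I would run a case analysis on $v_\ell(y)$. If $v_\ell(y) < 0$, then $v_\ell(y \pm 1) = v_\ell(y)$, so $v_\ell(\gamma_p(x)) = -v_\ell(y) > 0$. If $v_\ell(y) > 0$, then $y \equiv 0 \pmod{\ell}$ forces $v_\ell(y \pm 1) = 0$, hence $v_\ell(\gamma_p(x)) = v_\ell(y) > 0$. In both cases $\gamma_p(x) \in \mathbb{Z}_{(\ell)}$. When $v_\ell(y) = 0$, however, $v_\ell(\gamma_p(x)) < 0$ holds exactly when $y \equiv 1 \pmod{\ell}$ or $y \equiv -1 \pmod{\ell}$. (For $\ell = 2$ the two residue classes coincide, but the conclusion still holds since then both $y-1$ and $y+1$ are even.) Thus $\gamma_p(x) \notin \mathbb{Z}_{(\ell)}$ if and only if $v_\ell(x^p - x) = 0$ and $x^p - x \equiv \pm 1 \pmod{\ell}$.

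The next step is to translate this into a condition on the reduction $\bar x \in \mathbb{F}_\ell$. The requirement $v_\ell(x^p - x) = 0$ forces $x \in \mathbb{Z}_{(\ell)}$, since $v_\ell(x) < 0$ would imply $v_\ell(x^p - x) = p\,v_\ell(x) < 0$. So one may reduce modulo $\ell$, and the condition $x^p - x \equiv \pm 1 \pmod{\ell}$ becomes precisely the statement that $\bar x$ is a root of $X^p - X - 1$ or of $X^p - X + 1$ in $\mathbb{F}_\ell$.

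Combining, $\gamma_p(\mathbb{Q}) \subseteq \mathbb{Z}_{(\ell)}$ if and only if neither polynomial has a root in $\mathbb{F}_\ell$. The ``only if'' direction is the contrapositive of the preceding analysis. For the ``if'' direction, given a root $\bar x \in \mathbb{F}_\ell$ of one of the two polynomials, any integer lift $x$ of $\bar x$ satisfies $v_\ell(x^p - x) = 0$ and $x^p - x \equiv \pm 1 \pmod{\ell}$; since $X^p - X = \pm 1$ has only finitely many integer solutions, one can translate $x$ by a suitable multiple of $\ell$ to keep it in the domain of $\gamma_p$, producing a concrete rational witness with $\gamma_p(x) \notin \mathbb{Z}_{(\ell)}$. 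The only mild obstacle I anticipate is the uniform treatment of $\ell = 2$, where $1 \equiv -1$, but this is absorbed by the valuation bookkeeping above rather than requiring a separate argument.
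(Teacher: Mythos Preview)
Your proof is correct and follows essentially the same $\ell$-adic valuation case analysis on $y=x^p-x$ as the paper, with a bit of added care about poles and the case $\ell=2$. Note only that you have swapped the labels ``if'' and ``only if'' in the final paragraph: the preceding analysis (showing that $\gamma_p(x)\notin\mathbb{Z}_{(\ell)}$ forces $\bar x$ to be a root) gives the ``if'' direction by contraposition, while the witness construction from a root in $\mathbb{F}_\ell$ establishes the ``only if'' direction---both arguments are present and correct.
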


\begin{proof}
Let $x\in\mathbb{Q}$,
recall that $\gamma_p(x)=\frac{1}{p}((x^p-x)-(x^p-x)^{-1})^{-1}$ and denote by $v_\ell$ the $\ell$-adic valuation.
If $v_\ell(x^p-x)<0$ or $v_\ell(x^p-x)>0$, then $v_\ell(\gamma_p(x))>0$.
If $v_\ell(x^p-x)= 0$, then $x\in\mathbb{Z}_{(\ell)}$, and $v_\ell(\gamma_p(x))<0$ iff $(x^p-x)-(x^p-x)^{-1}\equiv 0\mod{\ell}$,
which means that $x^p-x\equiv\pm1\mod\ell$.
\end{proof}

\begin{proposition}\label{prp:p.l.1}
$\mathbb{Z}[\gamma_{p}(\mathbb{Q})]\subsetneqq\mathbb{Z}_{(p)}$. 
\end{proposition}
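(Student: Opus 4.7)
The plan is to invoke Lemma~\ref{lem:p.l.2} to locate, for each prime $p$, some prime $\ell \neq p$ such that $\gamma_p(\mathbb{Q}) \subseteq \mathbb{Z}_{(\ell)}$. The easy inclusion $\mathbb{Z}[\gamma_p(\mathbb{Q})] \subseteq \mathbb{Z}_{(p)}$ is immediate from the general fact $\gamma_p(\mathbb{Q}) \subseteq R_p(\mathbb{Q})$, combined with $R_p(\mathbb{Q}) = \mathcal{O}_p = \mathbb{Z}_{(p)}$ (since $\mathbb{Q}$ has only one prime above the rational prime $p$). For the strict inclusion, once such an $\ell$ is in hand we get $\mathbb{Z}[\gamma_p(\mathbb{Q})] \subseteq \mathbb{Z}_{(p)} \cap \mathbb{Z}_{(\ell)}$, and $1/\ell$ witnesses the proper containment since $1/\ell \in \mathbb{Z}_{(p)} \setminus \mathbb{Z}_{(\ell)}$.

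By Lemma~\ref{lem:p.l.2}, it suffices to find $\ell \neq p$ such that neither $X^p - X + 1$ nor $X^p - X - 1$ has a zero in $\mathbb{F}_\ell$. When $p$ is odd, the choice $\ell = 2$ works at once: modulo $2$, both polynomials reduce to $X^p + X + 1$, which takes the value $1$ at both $X = 0$ and $X = 1$. When $p = 2$, this choice is unavailable; however, the discriminants of $X^2 - X + 1$ and $X^2 - X - 1$ are $-3$ and $5$, so by quadratic reciprocity any odd prime $\ell$ with $\ell \equiv 2 \pmod 3$ and $\ell \equiv \pm 2 \pmod 5$ (for example $\ell = 17$) renders both discriminants non-squares modulo $\ell$, so both polynomials are rootless in $\mathbb{F}_\ell$.

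The only real obstacle is that the uniform choice $\ell = 2$ breaks down precisely in the case $p = 2$, forcing the separate quadratic reciprocity computation above; everything else is formal.
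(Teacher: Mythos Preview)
Your proof is correct and follows essentially the same approach as the paper: invoke Lemma~\ref{lem:p.l.2} with $\ell=2$ for odd $p$ and $\ell=17$ for $p=2$, yielding $\mathbb{Z}[\gamma_p(\mathbb{Q})]\subseteq\mathbb{Z}_{(\ell)}$ and hence proper containment in $\mathbb{Z}_{(p)}$. You supply slightly more detail than the paper (the explicit discriminant/quadratic reciprocity verification for $\ell=17$, and the witness $1/\ell$ for strictness), but the argument is the same.
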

\begin{proof}
There exists a prime number $\ell \neq p$ such that $\mathbb{Z}[\gamma_{p}(\mathbb{Q})]$ is contained in $\mathbb{Z}_{(l)}$ by Lemma~\ref{lem:p.l.2}:
specifically, the criterion given there is satisfied by $\ell=2$ if $p$ is odd and by $\ell=17$ for $p=2$.
\end{proof}

\begin{lemma}\label{lem:p.l.1}
If $\ell-1\mid p-1$ then $\gamma_{p}(\mathbb{Q})\subseteq \ell\mathbb{Z}_{(\ell)}$.
\end{lemma}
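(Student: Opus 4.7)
The plan is to compute $v_\ell(\gamma_p(x))$ directly for an arbitrary $x\in\mathbb{Q}$ at which $\gamma_p$ is defined, using the formula
\[
 \gamma_p(x)\;=\;\frac{1}{p}\cdot\frac{x^p-x}{(x^p-x)^2-1},
\]
and exploiting that $\ell\neq p$ makes $v_\ell(p)=0$. The computation in the proof of Lemma~\ref{lem:p.l.2} already shows that as soon as $v_\ell(x^p-x)\neq 0$, one has $v_\ell(\gamma_p(x))>0$, i.e.\ $\gamma_p(x)\in\ell\mathbb{Z}_{(\ell)}$. So the task reduces to verifying that, under the hypothesis $\ell-1\mid p-1$, the value $v_\ell(x^p-x)$ is never zero.

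This is where the divisibility hypothesis enters, via Fermat's little theorem: if $v_\ell(x)\geq 0$, then $\bar x\in\mathbb{F}_\ell$ satisfies $\bar x^{\ell-1}=1$ (when $\bar x\neq 0$), hence $\bar x^{p-1}=1$ because $\ell-1\mid p-1$, hence $x^p\equiv x\pmod\ell$, giving $v_\ell(x^p-x)\geq 1$; when $\bar x=0$ the same inequality is trivial. If instead $v_\ell(x)<0$, then $v_\ell(x^p-x)=pv_\ell(x)<0$. In either case $v_\ell(x^p-x)\neq 0$, so the bound from Lemma~\ref{lem:p.l.2} gives $\gamma_p(x)\in\ell\mathbb{Z}_{(\ell)}$ as required.

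I do not expect a substantive obstacle here: Lemma~\ref{lem:p.l.2} already does the work of tracking $v_\ell(\gamma_p(x))$, and the hypothesis $\ell-1\mid p-1$ is precisely tailored to force $x^p\equiv x\pmod\ell$ for all $x\in\mathbb{Z}_{(\ell)}$. The only minor point of care is that $x$ with $(x^p-x)^2=1$ are excluded from the domain of $\gamma_p$ by convention, so they do not need to be considered.
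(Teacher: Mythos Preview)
Your proof is correct and follows essentially the same approach as the paper: both arguments observe that $\ell-1\mid p-1$ forces $x^p\equiv x\pmod\ell$ for all $x\in\mathbb{F}_\ell$ (Fermat's little theorem), and then appeal to the valuation computation from the proof of Lemma~\ref{lem:p.l.2} to conclude $v_\ell(\gamma_p(x))>0$. Your version is simply more explicit about the case distinction on $v_\ell(x)$, whereas the paper compresses the argument into two sentences.
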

\begin{proof}
If $\ell-1|p-1$, then $x^{p}-x=0$ for all $x\in\mathbb{F}_{\ell}$. Thus $v_\ell(\gamma_p(x)) > 0$ for all $x \in \mathbb{Q}$, where $v_\ell$ is the $\ell$-adic valuation.
\end{proof}

\begin{proposition}\label{prp:p.l.3}
For every finite set $\mathcal{P}\subseteq\mathbb{Q}[X_{1},X_{2},...]$, 
there exist some $p$ and $\ell\neq p$
with
$$
    \bigcup_{g\in\mathcal{P}}R_{p,g,p}(\mathbb{Q})\subseteq\mathbb{Z}_{(\ell)}.
$$
In particular, $\sup_{p}\pi_{p}(\mathbb{Q})=\infty$.
\end{proposition}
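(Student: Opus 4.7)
The plan hinges on Lemma \ref{lem:p.l.1}, which shows that if $\ell - 1 \mid p - 1$ then $\gamma_p(\mathbb{Q}) \subseteq \ell\mathbb{Z}_{(\ell)}$, so every value of $\gamma_p$ reduces to $0$ modulo $\ell$. My strategy is to first fix a suitable auxiliary prime $\ell$ large enough to accommodate the data of $\mathcal{P}$, then use Dirichlet's theorem on primes in arithmetic progressions to produce $p \equiv 1 \pmod{\ell - 1}$ subject to finitely many additional residue constraints modulo $\ell$ that prevent the denominators $1 + pb$ from vanishing modulo $\ell$.

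Since $\mathcal{P}$ is finite, only finitely many primes occur in the denominators of its coefficients; I choose a prime $\ell$ larger than all of them and satisfying $\ell > |\mathcal{P}| + 1$. Then every $g \in \mathcal{P}$ has coefficients in $\mathbb{Z}_{(\ell)}$ and a well-defined reduction $g(\vec 0) \bmod \ell$. Combining Dirichlet with the Chinese remainder theorem (valid since $\gcd(\ell, \ell - 1) = 1$), I find a prime $p \neq \ell$ with $p \equiv 1 \pmod{\ell - 1}$ and $p\, g(\vec 0) \not\equiv -1 \pmod \ell$ for every $g \in \mathcal{P}$. Each such extra condition excludes at most one residue class mod $\ell$, so the lower bound on $\ell$ leaves enough admissible residues for Dirichlet to produce such a $p$.

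With $\ell$ and $p$ so chosen, Lemma \ref{lem:p.l.1} gives $\gamma_p(\mathbb{Q}) \subseteq \ell\mathbb{Z}_{(\ell)}$. For every $g \in \mathcal{P}$, each element of $g(\gamma_p(\mathbb{Q}), \ldots, \gamma_p(\mathbb{Q}))$ then lies in $\mathbb{Z}_{(\ell)}$ and reduces to $g(\vec 0)$ modulo $\ell$. Hence for any $a, b$ in this set, $1 + pb \equiv 1 + p\,g(\vec 0) \not\equiv 0 \pmod \ell$ by choice of $p$, so $1 + pb$ is a unit of $\mathbb{Z}_{(\ell)}$ and $a/(1+pb) \in \mathbb{Z}_{(\ell)}$, yielding the claimed inclusion.

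For the \emph{in particular} statement, observe that when $p$ exceeds the height parameter $M$, the set $\mathcal{P}_{p, M}$ stabilises to a fixed finite subset $\mathcal{P}_0 \subseteq \mathbb{Q}[X_1, \ldots, X_M]$ independent of $p$. Repeating the construction above with $\mathcal{P} = \mathcal{P}_0$, but additionally requiring $p\, g(\vec 0) \not\equiv 1 \pmod \ell$ for each $g \in \mathcal{P}_0$ in order to handle also the uniformizer $t_p = -p$ (via $\gamma_{p,-p} = -\gamma_p$, which reduces the $R_{p,g,-p}(\mathbb{Q})$ case to the same kind of residue constraint), and demanding $p$ sufficiently large, we obtain primes $p, \ell$ with $R_{p, M}(\mathbb{Q}) \subseteq \mathbb{Z}_{(\ell)}$; the passage from $R_{p, g, t_p}$ to $R_{p, g, t_p, M}$ uses that $\mathbb{Z}_{(\ell)}$ is integrally closed in $\mathbb{Q}$. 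Since $R_p(\mathbb{Q}) = \mathbb{Z}_{(p)} \not\subseteq \mathbb{Z}_{(\ell)}$, this contradicts $\pi_p(\mathbb{Q}) \leq M$, forcing $\sup_p \pi_p(\mathbb{Q}) = \infty$. The main technical obstacle is organising the CRT/Dirichlet step cleanly in the presence of multiple residue constraints on $p$; once this is done, everything else is straightforward bookkeeping.
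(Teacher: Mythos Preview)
Your argument is correct and follows essentially the same route as the paper: choose $\ell$ large, use Lemma~\ref{lem:p.l.1} together with Dirichlet/CRT to force $\gamma_p(\mathbb{Q})\subseteq\ell\mathbb{Z}_{(\ell)}$ while avoiding the finitely many bad residues of $p$ modulo $\ell$, and then invoke integral closedness of $\mathbb{Z}_{(\ell)}$ for the passage to $R_{p,g,t_p,M}$. The only noticeable difference is in the treatment of the uniformizer $-p$: the paper observes that $R_{p,g,-p,n}(\mathbb{Q})=-R_{p,g^{*},p,n}(\mathbb{Q})$ with $g^{*}(X)=-g(-X)$ of the same height, so enlarging $\mathcal{P}$ to all polynomials of the given height bound already covers this case, whereas you instead impose the additional residue condition $p\,g(\vec 0)\not\equiv 1\pmod{\ell}$; both work, though for yours you should make explicit that $\ell$ is chosen with $\ell-1>2|\mathcal{P}_0|$ so that the doubled set of excluded residues still leaves an admissible class.
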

\begin{proof}
Choose $\ell>|\mathcal{P}|+1$ such that $\mathcal{P}\subseteq\mathbb{Z}_{(\ell)}[X_{1},X_{2},...]$.
There exists $a\in\mathbb{Z}$ such that
$a\not\equiv 0\pmod\ell$ and
$a\not\equiv g(0,...,0)\pmod\ell$
for every $g\in\mathcal{P}$.
By Dirichlet's theorem on primes in arithmetic progressions
(see \cite[VII, (13.2)]{Neu99}),
there exist infinitely many primes $p>\ell$ with
$p\equiv1\pmod{\ell-1}$ and $p\equiv -a^{-1}\pmod{\ell}$.
Then
$$
    g(\gamma_{p}(\mathbb{Q}),...,\gamma_{p}(\mathbb{Q}))\subseteq g(0,...,0)+\ell\mathbb{Z}_{(\ell)}
$$
by Lemma \ref{lem:p.l.1},
hence $1+pg(\gamma_{p}(\mathbb{Q}),...,\gamma_{p}(\mathbb{Q}))\subseteq\mathbb{Z}_{(\ell)}^{\times}$
by the choice of $a$ and $p$.
Thus $R_{p,g,p}(\mathbb{Q})\subseteq\mathbb{Z}_{(\ell)}$ for every $g\in\mathcal{P}$.

By the integral closedness of $\mathbb{Z}_{(\ell)}$ this implies
$R_{p,g,p,n}(\mathbb{Q})\subseteq\mathbb{Z}_{(\ell)}$
for every $n$.
Note that $R_{p,g,-p,n}(F) =  -R_{p,g^*,p,n}(F)$, where 
$g^*(X_1,\dots,X_n) = -g(-X_1,\dots,-X_n)$ has the same height as $g$.
Therefore, 
applying the above to the set $\mathcal{P}$
of all $f\in\mathbb{Q}[X_1,\dots,X_n]$ of degree and height at most $n$,
we obtain $\ell$ and $p>\ell$ with
$$
 \bigcup_{g\in\mathcal{P}_{p,n}}(R_{p,g,p,n}(F)\cup R_{p,g,-p,n}(F))
 \subseteq\bigcup_{p\in\mathcal{P}}R_{p,g,p,n}(F)\subseteq\mathbb{Z}_{(\ell)},
$$
and therefore $\pi_p(\mathbb{Q})>n$.
\end{proof}

\subsection{The Kochen operator}
\label{subsection:Kochen}

For later use, we explore several simple properties of the Kochen operator.
Let $F/K$ be any extension.

\begin{lemma}\label{lem:gamma.1}
Let $\mfP\in\mcS_{\mfp}^{*}(F)$ and 
suppose that $x\in F$ is not a pole of $\gamma_{\mfp,t_\mfp}^\tau$.
Then
\begin{align*}
    v_{\mfP}(\gamma_{\mfp, t_\mfp}^{\tau}(x))&=\left\{\begin{array}{ll}
        -eq^{f}v_{\mfP}(x)-v_{\mfP}(t_{\mfp})&\text{if $v_{\mfP}(x)<0$},\\
        ev_{\mfP}(x)-v_{\mfP}(t_{\mfp})&\text{if $v_{\mfP}(x)>0$},\\
        ev_{\mfP}(x^{q^{f}}-x)-v_{\mfP}(t_{\mfp})&\text{if $v_{\mfP}(x)=0$ and $v_{\mfP}(x^{q^{f}}-x)>0$},\\
        -ev_{\mfP}((x^{q^{f}}-x)^{2}-1)-v_{\mfP}(t_{\mfp})&\text{if $v_{\mfP}(x)=0$ and
        $v_{\mfP}(x^{q^{f}}-x)=0$}.
    \end{array}\right.
\end{align*}
\end{lemma}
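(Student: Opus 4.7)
The plan is to unwind the definition of the Kochen operator and reduce everything to a careful case analysis using the ultrametric inequality. Since $v_\mfP$ is a valuation, we have
\begin{align*}
v_\mfP(\gamma_{\mfp,t_\mfp}^\tau(x)) &= -v_\mfP(t_\mfp) + e\bigl[v_\mfP(x^{q^f}-x) - v_\mfP((x^{q^f}-x)^2-1)\bigr],
\end{align*}
so the lemma reduces to computing the two quantities $v_\mfP(x^{q^f}-x)$ and $v_\mfP((x^{q^f}-x)^2-1)$ in each of the four cases. Throughout I will use the standard fact that $v_\mfP(a+b)=\min\{v_\mfP(a),v_\mfP(b)\}$ whenever $v_\mfP(a)\neq v_\mfP(b)$.

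First I would dispose of the two easy sign cases. If $v_\mfP(x)<0$, then $v_\mfP(x^{q^f})=q^f v_\mfP(x)<v_\mfP(x)$, so $v_\mfP(x^{q^f}-x)=q^fv_\mfP(x)$, which is negative; consequently $v_\mfP((x^{q^f}-x)^2)=2q^f v_\mfP(x)<0$ and so $v_\mfP((x^{q^f}-x)^2-1)=2q^f v_\mfP(x)$. Subtracting yields the first line of the statement. Dually, if $v_\mfP(x)>0$, then $v_\mfP(x^{q^f})>v_\mfP(x)>0$, so $v_\mfP(x^{q^f}-x)=v_\mfP(x)>0$; then $v_\mfP((x^{q^f}-x)^2)>0$, so $v_\mfP((x^{q^f}-x)^2-1)=v_\mfP(-1)=0$, which gives the second line.

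Next I would handle the two cases where $v_\mfP(x)=0$. If $v_\mfP(x^{q^f}-x)>0$, then again $v_\mfP((x^{q^f}-x)^2)>0$ so $v_\mfP((x^{q^f}-x)^2-1)=0$, which gives the third line. Finally, if $v_\mfP(x^{q^f}-x)=0$, the second term is simply $v_\mfP((x^{q^f}-x)^2-1)$ (which is defined because $x$ is not a pole of $\gamma_{\mfp,t_\mfp}^\tau$, i.e.\ $x^{q^f}-x\neq\pm1$, although it may reduce to $\pm1$ modulo $\mfP$, so this valuation can be arbitrarily large); substituting gives the fourth line.

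There is no real obstacle here: the argument is a purely mechanical case split. The only mild subtlety is to remember that in cases where one of $x^{q^f}$, $x$, $(x^{q^f}-x)^2$, or $1$ dominates strictly, the ultrametric inequality becomes an equality, whereas in the last case no such domination is available and the valuation $v_\mfP((x^{q^f}-x)^2-1)$ must simply be left as is.
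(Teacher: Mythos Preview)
Your argument is correct and is exactly the routine valuation computation that the paper alludes to with ``This is a matter of calculating valuations.'' There is nothing to add: your case split and use of the strict ultrametric equality are precisely what is needed.
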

\begin{proof}
This is a matter of calculating valuations.
\end{proof}

\begin{lemma}\label{lem:gamma}
Let $\mfP\in\mathcal{S}_{\mfp}^{*}(F)$.
Suppose that $x\in F$ is not a pole of $\gamma_{\mfp, t_\mfp}^\tau$ and satisfies
either
\begin{enumerate}
\item[(i)]
$0<(e+1)v_{\mfP}(x)\leq v_{\mfP}(t_{\mfp})$, or
\item[(ii)]
$v_{\mfP}(x)=0$ and
$[\mathbb{F}_{q}(\res_\mfP(x)):\mathbb{F}_{q}]\nmid f$, where $\res_\mfP(x)$ is the residue of $x$.
\end{enumerate}
Then 
\begin{align*}
v_\mathfrak{P}(\gamma_{\mfp, t_\mfp}^{\tau}(x))&\leq-\frac{1}{e+1}v_\mathfrak{P}(t_{\mathfrak{p}})<0.
\end{align*}
\end{lemma}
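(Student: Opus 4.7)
The plan is to apply Lemma~\ref{lem:gamma.1} separately in each of the two hypothesised cases, and check the required inequality directly. First I would record the basic fact that $v_\mfP(t_\mfp)>0$: since $t_\mfp$ is a uniformizer of $v_\mfp$ and $\mfP$ lies above $\mfp$, we have $v_\mfP(t_\mfp)=e(\mfP|\mfp)\cdot v_\mfp(t_\mfp)=e(\mfP|\mfp)\geq 1$. This immediately gives the second inequality $-\frac{1}{e+1}v_\mfP(t_\mfp)<0$ of the conclusion, and reduces the task to establishing the upper bound $v_\mfP(\gamma_{\mfp,t_\mfp}^\tau(x))\leq -\frac{1}{e+1}v_\mfP(t_\mfp)$.

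For case (i), the hypothesis $(e+1)v_\mfP(x)>0$ puts us in the second branch of Lemma~\ref{lem:gamma.1}, so $v_\mfP(\gamma_{\mfp,t_\mfp}^\tau(x))=ev_\mfP(x)-v_\mfP(t_\mfp)$. From $(e+1)v_\mfP(x)\leq v_\mfP(t_\mfp)$ one gets $ev_\mfP(x)\leq\frac{e}{e+1}v_\mfP(t_\mfp)$, hence
\[
v_\mfP(\gamma_{\mfp,t_\mfp}^\tau(x))\;\leq\;\frac{e}{e+1}v_\mfP(t_\mfp)-v_\mfP(t_\mfp)\;=\;-\frac{1}{e+1}v_\mfP(t_\mfp),
\]
which is what is required.

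For case (ii), the key observation is that the condition $[\mathbb{F}_q(\res_\mfP(x)):\mathbb{F}_q]\nmid f$ is equivalent to $\res_\mfP(x)\notin\mathbb{F}_{q^f}$, i.e.\ to $\res_\mfP(x)^{q^f}\neq\res_\mfP(x)$. Since $v_\mfP(x)=0$, this says precisely that $v_\mfP(x^{q^f}-x)=0$, so Lemma~\ref{lem:gamma.1} (fourth branch) gives $v_\mfP(\gamma_{\mfp,t_\mfp}^\tau(x))=-ev_\mfP((x^{q^f}-x)^2-1)-v_\mfP(t_\mfp)$. As $v_\mfP(x^{q^f}-x)=0$, we also have $v_\mfP((x^{q^f}-x)^2-1)\geq 0$ by the ultrametric inequality, so
\[
v_\mfP(\gamma_{\mfp,t_\mfp}^\tau(x))\;\leq\;-v_\mfP(t_\mfp)\;\leq\;-\frac{1}{e+1}v_\mfP(t_\mfp),
\]
where the last inequality uses $e\geq 1$ and $v_\mfP(t_\mfp)>0$.

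There is no genuine obstacle here; the lemma is a bookkeeping consequence of Lemma~\ref{lem:gamma.1}. The one point that warrants care is the reformulation in case (ii) of the residue degree condition as a statement about $v_\mfP(x^{q^f}-x)$, and making sure the right branch of Lemma~\ref{lem:gamma.1} is selected; once this is in place, the two cases are dispatched by the short computations above.
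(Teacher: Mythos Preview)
Your proposal is correct and follows essentially the same approach as the paper's proof: apply Lemma~\ref{lem:gamma.1} in each case and read off the inequality. You spell out a bit more of the arithmetic (the justification of $v_\mfP(t_\mfp)>0$ and the manipulation $ev_\mfP(x)\leq\frac{e}{e+1}v_\mfP(t_\mfp)$ in case (i)), but the argument is the same.
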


\begin{proof} 
In case (i),
Lemma \ref{lem:gamma.1} gives that
\begin{align*}
v_\mfP(\gamma_{\mfp, t_\mfp}^{\tau}(x))&=ev_\mfP(x)-v_\mfP(t_{\mathfrak{p}})
\leq-\frac{1}{e+1}v_\mfP(t_{\mathfrak{p}}).
\end{align*}
In case (ii), the residue of $x$ is not a root of $X^{q^{f}}-X$,
and so
$$
v_\mfP(\gamma_{\mfp,t_\mfp}^{\tau}(x))=-ev_\mfP((x^{q^{f}}-x)^{2}-1)-v_\mfP(t_{\mathfrak{p}})
\leq-v_\mfP(t_{\mathfrak{p}})
\leq-\frac{1}{e+1}v_\mfP(t_{\mathfrak{p}}),
$$
also by Lemma \ref{lem:gamma.1}.
\end{proof}

\begin{lemma}\label{lem:gamma.3}
Let $\mfP\in\mcS_{\mfp}^{*}(F)$ and $x,y\in F$,
and suppose that $x$ is not a pole of $\gamma_{\mfp,t_\mfp}^\tau$,
and
$v_{\mfP}(\gamma_{\mfp, t_\mfp}^\tau(x))<0$.
If
$v_{\mfP}(x-y)\geq v_{\mfP}(t_\mfp)$,
then also $y$ is not a pole of $\gamma_{\mfp,t_\mfp}^\tau$, and $v_{\mfP}(\gamma_{\mfp, t_\mfp}^\tau(y))<0$.
\end{lemma}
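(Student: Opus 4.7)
The plan is to perform a case analysis on $v_\mfP(x)$ following the four branches of Lemma \ref{lem:gamma.1}, and in each case use the closeness hypothesis $v_\mfP(x-y) \geq v_\mfP(t_\mfp) > 0$ together with ultrametric estimates to show that the same branch applies to $y$ and produces a $\gamma_{\mfp,t_\mfp}^\tau$-value of the same (negative) valuation. The preliminary ingredient, repeatedly used whenever $v_\mfP(x), v_\mfP(y) \geq 0$, is the factorisation
\[
y^{q^f} - x^{q^f} \;=\; (y-x)\sum_{i=0}^{q^f-1} y^i x^{q^f-1-i},
\]
whose second factor has nonnegative valuation; it yields $v_\mfP((y^{q^f}-y)-(x^{q^f}-x)) \geq v_\mfP(y-x) \geq v_\mfP(t_\mfp)$.

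In the first two cases ($v_\mfP(x) \neq 0$), the assumption $v_\mfP(\gamma_{\mfp,t_\mfp}^\tau(x)) < 0$ combined with the corresponding formula of Lemma \ref{lem:gamma.1} forces $|v_\mfP(x)| < v_\mfP(t_\mfp)$, so the ultrametric inequality gives $v_\mfP(y) = v_\mfP(x)$; the formula then propagates identically, and $y$ cannot be a pole since $(y^{q^f}-y)^2 = 1$ requires $v_\mfP(y)=0$. In the third case, $v_\mfP(x) = 0$ and $v_\mfP(x^{q^f}-x) > 0$, the assumption forces $v_\mfP(x^{q^f}-x) < v_\mfP(t_\mfp)$, and the preliminary estimate combined with $v_\mfP(y)=v_\mfP(x)=0$ yields $v_\mfP(y^{q^f}-y) = v_\mfP(x^{q^f}-x)$; hence $y$ is not a pole and $v_\mfP(\gamma_{\mfp,t_\mfp}^\tau(y)) = v_\mfP(\gamma_{\mfp,t_\mfp}^\tau(x))$.

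The main obstacle is the fourth case $v_\mfP(x)=0=v_\mfP(x^{q^f}-x)$, where $v_\mfP(\gamma_{\mfp,t_\mfp}^\tau(x))$ is controlled by $m := v_\mfP((x^{q^f}-x)^2 - 1) \geq 0$. A difference-of-squares factorisation together with the preliminary estimate produces
\[
v_\mfP\bigl((y^{q^f}-y)^2 - (x^{q^f}-x)^2\bigr) \;\geq\; v_\mfP(t_\mfp),
\]
and the decomposition $(y^{q^f}-y)^2 - 1 = ((x^{q^f}-x)^2 - 1) + ((y^{q^f}-y)^2 - (x^{q^f}-x)^2)$ together with the ultrametric inequality preserves the valuation $m$ whenever $m < v_\mfP(t_\mfp)$, whence $y$ is not a pole and $v_\mfP(\gamma_{\mfp,t_\mfp}^\tau(y)) = v_\mfP(\gamma_{\mfp,t_\mfp}^\tau(x))$. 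The delicate subcase is $m \geq v_\mfP(t_\mfp)$, where the two summands may share valuation and a priori cancel; here one has to exploit the concrete form of the Kochen denominator, together with the hypothesis that $x$ is not a pole, to rule out $(y^{q^f}-y)^2 = 1$ and to conclude $v_\mfP(\gamma_{\mfp,t_\mfp}^\tau(y)) < 0$ directly from the bound $v_\mfP((y^{q^f}-y)^2 - 1) \geq v_\mfP(t_\mfp)$, without needing equality with $v_\mfP(\gamma_{\mfp,t_\mfp}^\tau(x))$.
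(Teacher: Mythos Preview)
Your overall strategy is the same as the paper's: case split via Lemma~\ref{lem:gamma.1}, establish $v_\mfP(y)=v_\mfP(x)$ and (when $v_\mfP(x)=0$) $v_\mfP(y^{q^f}-y)=v_\mfP(x^{q^f}-x)$ using the factorisation of $y^{q^f}-x^{q^f}$, then read off the valuation of $\gamma_{\mfp,t_\mfp}^\tau(y)$. Cases with $v_\mfP(x)\neq 0$ and with $v_\mfP(x^{q^f}-x)>0$ are handled exactly as in the paper.

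In the fourth case you work harder than necessary. Once you know $v_\mfP(y)=0$ and $v_\mfP(y^{q^f}-y)=0$, Lemma~\ref{lem:gamma.1} applied directly to $y$ gives
\[
v_\mfP(\gamma_{\mfp,t_\mfp}^\tau(y))=-e\,v_\mfP\bigl((y^{q^f}-y)^2-1\bigr)-v_\mfP(t_\mfp)\leq -v_\mfP(t_\mfp)<0,
\]
since $v_\mfP((y^{q^f}-y)^2-1)\geq 0$. No comparison of $m$ with $v_\mfP(t_\mfp)$ is needed; this is exactly the paper's one-line argument in that subcase.

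Your ``delicate subcase'' does, however, put its finger on a genuine issue: the clause ``$y$ is not a pole'' cannot in fact be secured by the stated hypotheses, and your plan to ``rule out $(y^{q^f}-y)^2=1$'' would fail. For a concrete obstruction, take any $y_0\in F$ with $y_0^{q^f}-y_0=1$ (such $y_0$ exist in suitable extensions of $K_\mfp$), and set $x=y_0+t_\mfp$; then $v_\mfP(x-y_0)=v_\mfP(t_\mfp)$, one checks $x$ is not a pole and $v_\mfP(\gamma_{\mfp,t_\mfp}^\tau(x))<0$, yet $y_0$ is a pole. The paper's proof does not address this point either: it simply invokes Lemma~\ref{lem:gamma.1} for $y$, which presupposes $y$ is not a pole. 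For the only application (Proposition~\ref{prp:finding.y}) what is actually needed is just $v_\mfP(\gamma_{\mfp,t_\mfp}^\tau(y))<0$, which remains true under the convention that a pole has valuation $-\infty$; so the defect is in the lemma's statement rather than in its use.
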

\begin{proof}
If $v_\mfP(x)\leq 0$, then 
in particular $v_\mfP(x)< v_\mfP(t_\mfp)$,
while if $v_\mfp(x)>0$,
then $v_{\mfP}(\gamma_{\mfp, t_\mfp}^{\tau}(x))=ev_{\mfP}(x)-v_{\mfP}(t_{\mfp})$ by Lemma \ref{lem:gamma.1},
hence $v_{\mfP}(\gamma_{\mfp, t_\mfp}^{\tau}(x))<0$
implies that $v_{\mfP}(x)<v_{\mfP}(t_{\mfp})$
also in this case.
Therefore, in either case
we conclude from 
$v_{\mfP}(x-y)\geq v_{\mfP}(t_\mfp)$
that
$v_{\mfP}(x)=v_{\mfP}(y)$.
We make a case distinction:

Suppose first that $v_{\mfP}(x)\neq0$.
By Lemma \ref{lem:gamma.1},
in this case,
$v_{\mfP}(\gamma_{\mfp, t_\mfp}^{\tau}(x))$ depends only on $v_{\mfP}(x)$.
Therefore $v_{\mfP}(\gamma_{\mfp, t_\mfp}^{\tau}(y))=v_{\mfP}(\gamma_{\mfp, t_\mfp}^{\tau}(x))<0$.

Suppose now that $v_\mfP(x)=0$.
As $x-y$ divides $x^{q^f}-y^{q^f}$ in $\mathcal{O}_\mfP$,
we have that
$v_{\mfP}(y^{q^f}-y-x^{q^f}+x)\geq v_{\mfP}(x-y)\geq v_{\mfP}(t_{\mfp})$.
If 
$v_\mfP(x^{q^f}-x)=0$, then in particular $v_\mfP(x^{q^f}-x)<v_\mfP(t_\mfp)$,
while if $v_\mfP(x^{q^f}-x)>0$, then
$v_{\mfP}(\gamma_{\mfp, t_\mfp}^{\tau}(x))<0$
implies that $v_{\mfP}(x^{q^{f}}-x)<\frac{1}{e}v_{\mfP}(t_{\mfp})\leq v_{\mfP}(t_{\mfp})$
by Lemma \ref{lem:gamma.1}.
Thus $v_\mfP(y^{q^f}-y)=v_\mfP(x^{q^f}-x)$ in both cases.
If $v_{\mfP}(x^{q^{f}}-x)=0$, then
Lemma~\ref{lem:gamma.1} gives immediately that
$v_{\mfP}(\gamma_{\mfp, t_\mfp}^{\tau}(y))<0$,
while if $v_{\mfP}(x^{q^{f}}-x)>0$,
then
Lemma~\ref{lem:gamma.1} shows that
$v_{\mfP}(\gamma_{\mfp, t_\mfp}^{\tau}(x))$ depends only on $v_{\mfP}(x^{q^{f}}-x)$,
hence $v_{\mfP}(\gamma_{\mfp, t_\mfp}^{\tau}(y))=v_{\mfP}(\gamma_{\mfp, t_\mfp}^{\tau}(x))<0$.
\end{proof}

\section{Diophantine families}
\label{section:diophantine}

A {\bf diophantine}
subset of a field $F$
is the 
image of the $F$-rational points of some $F$-variety $V$
under a morphism $V\rightarrow\mathbb{A}^1_F$.
As we want to discuss questions of uniformity we 
use the following slightly more sophisticated notion:
An {\bf $n$-dimensional diophantine family over $K$}
is a map $D$ from the class of field extensions $F$ of $K$ to sets
which is given by
finitely many polynomials $f_1,\dots,f_r\in K[X_1,\dots,X_n,Y_1,\dots,Y_m]$, for some $m$,
in the sense that 
$$ 
D(F)= \{x\in F^n \;|\; \exists y\in F^m:f_1(x,y)=0,\dots,f_r(x,y)=0\}
$$
for every extension $F/K$.
In this case, we say that the polynomials $f_{1},...,f_{r}$ {\bf define} $D$.
Note that if $E/F$ is an extension, then $D(F)\subseteq D(E)$.

\begin{remark}
From the point of view of algebraic geometry,
an $n$-dimensional diophantine family $D$ over $K$ is given by a morphism
of (not necessarily irreducible) $K$-varieties $\varphi:V\rightarrow\mathbb{A}_K^n$
in the sense that $D(F)=\varphi(V(F))$ for every extension $F/K$.
\end{remark}

\begin{remark}\label{rem:modeltheory}
From the point of view of model theory,
an $n$-dimensional diophantine family $D$ over $K$ is given by an 
existential formula $\varphi(x_1,\dots,x_n)$ in the language of rings with free variables among $x_1,\dots,x_n$ and parameters from $K$,
in the sense that for every extension $F/K$, 
$D(F)$ is the set defined by $\varphi$ in $F$,
i.e.~the set of $a\in F^m$ such that
$F\models\varphi(a)$.
Such a formula is equivalent (modulo the theory of fields)
to a formula of the form
$$
 \exists y_1\dots y_m:\bigwedge_{i=1}^r f_i(x_1,\dots,x_n,y_1,\dots,y_m)=0
$$
with $f_1,\dots,f_r\in K[X_1,\dots,X_n,Y_1,\dots,Y_m]$.
\end{remark}

Most of the usual constructions for diophantine sets (see e.g.~\cite{Shl06}) go through for diophantine families:

\begin{lemma}\label{lem:dio.1}
If $D_1,D_2$ are $n$-dimensional diophantine families over $K$,
then there are $n$-dimensional diophantine families $D_1\cup D_2$ 
and $D_1\cap D_2$ over $K$ such that
$(D_1\cup D_2)(F)=D_1(F)\cup D_2(F)$ 
and $(D_1\cap D_2)(F)=D_1(F)\cap D_2(F)$ for every $F/K$.
\end{lemma}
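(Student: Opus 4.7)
The approach is standard and mirrors the usual manipulation of existential formulas in the language of rings (cf.\ Remark~\ref{rem:modeltheory}). Write $D_1$ as defined by polynomials $f_1,\dots,f_r \in K[X_1,\dots,X_n,Y_1,\dots,Y_m]$ and $D_2$ by polynomials $g_1,\dots,g_s \in K[X_1,\dots,X_n,Z_1,\dots,Z_\ell]$, where the auxiliary variable tuples $Y$ and $Z$ are chosen to be disjoint.

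For the intersection, I would take the concatenated system $f_1,\dots,f_r,g_1,\dots,g_s$, viewed as polynomials in the combined ring $K[X_1,\dots,X_n,Y_1,\dots,Y_m,Z_1,\dots,Z_\ell]$. For a given $x \in F^n$, the existence of a common zero $(y,z) \in F^{m+\ell}$ then decomposes into the independent existence of $y \in F^m$ with $f_i(x,y)=0$ for all $i$ and $z \in F^\ell$ with $g_j(x,z)=0$ for all $j$, precisely because the two blocks of auxiliary variables do not interact. This yields $(D_1 \cap D_2)(F)=D_1(F)\cap D_2(F)$.

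For the union, I would encode the disjunction $\bigl(\bigwedge_i f_i = 0\bigr) \vee \bigl(\bigwedge_j g_j = 0\bigr)$ as the conjunction $\bigwedge_{i,j} f_i g_j = 0$, and take these $rs$ products as the polynomials defining $D_1 \cup D_2$, with auxiliary variables $Y$ and $Z$. The verification has two directions: if $x \in D_1(F)$ with witness $y$, then pairing $y$ with any $z\in F^\ell$ makes every product $f_i(x,y)g_j(x,z)$ vanish, and symmetrically for $D_2$; conversely, if all products vanish at some $(y,z)\in F^{m+\ell}$, then either $f_i(x,y)=0$ for all $i$ and so $x \in D_1(F)$, or some $f_{i_0}(x,y)$ is nonzero, in which case $g_j(x,z)=0$ for every $j$ since $F$ is a field and so $x \in D_2(F)$.

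There is no real obstacle here; the only care needed is to keep the auxiliary variables of $D_1$ and $D_2$ disjoint so that the two witnesses remain independent, and to use that $F$ has no nonzero zero-divisors in the argument for the union.
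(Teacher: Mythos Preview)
Your proposal is correct and follows exactly the paper's approach: the intersection is defined by concatenating the two systems with disjoint auxiliary variables, and the union by the products $f_ig_j$. You supply more verification detail than the paper (which simply asserts the construction works), but the argument is identical.
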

\begin{proof}
Suppose that the polynomials $f_{1},...,f_{r}\in K[X_{1},...,X_{n},Y_{1},...,Y_{m}]$ define $D_{1}$ and
that the polynomials $g_{1},...,g_{s}\in K[X_{1},...,X_{n},Z_{1},...,Z_{l}]$ define $D_{2}$.
We may assume that the variables $Y_{i}$ and $Z_{j}$ are distinct.
We observe that
$f_{1},...,f_{r},g_{1},...,g_{s}$
define $D_{1}\cap D_{2}$.
Slightly less trivially, we have that
$f_{1}g_{1},...,f_{i}g_{j},...,f_{r}g_{s}$
define
$D_{1}\cup D_{2}$.
\end{proof}

\begin{lemma}\label{lem:dio.2}
Suppose that $D_{1}$
and $D_{2}$
are $n_{1}$- respectively $n_2$-dimensional
diophantine families over $K$.
Then there is an $(n_1+n_2)$-dimensional diophantine family $D_1\times D_2$ over $K$ such that $(D_1\times D_2)(F)=D_1(F)\times D_2(F)$ for every $F/K$.
\end{lemma}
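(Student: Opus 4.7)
The plan is to adapt the intersection construction from Lemma~\ref{lem:dio.1}, keeping the two blocks of external variables separate instead of identifying them. First I would take defining polynomials $f_{1},\ldots,f_{r}\in K[X_{1},\ldots,X_{n_{1}},Y_{1},\ldots,Y_{m}]$ for $D_{1}$ and $g_{1},\ldots,g_{s}\in K[X'_{1},\ldots,X'_{n_{2}},Z_{1},\ldots,Z_{l}]$ for $D_{2}$, renaming variables (which is harmless) so that the four blocks $X_{i}$, $X'_{j}$, $Y_{k}$, $Z_{\ell}$ are pairwise disjoint.

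Next I would view all the $f_{i}$ and $g_{j}$ as elements of the single larger ring $K[X_{1},\ldots,X_{n_{1}},X'_{1},\ldots,X'_{n_{2}},Y_{1},\ldots,Y_{m},Z_{1},\ldots,Z_{l}]$ and let $D_{1}\times D_{2}$ be the $(n_{1}+n_{2})$-dimensional diophantine family over $K$ defined by this combined list, with the external variables being the concatenated tuple $(X_{1},\ldots,X_{n_{1}},X'_{1},\ldots,X'_{n_{2}})$ and the remaining variables $Y_{1},\ldots,Y_{m},Z_{1},\ldots,Z_{l}$ existentially quantified.

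To verify the desired identity, fix an extension $F/K$ and a pair $(a,b)\in F^{n_{1}}\times F^{n_{2}}$. By construction, $(a,b)\in(D_{1}\times D_{2})(F)$ iff there exist $y\in F^{m}$ and $z\in F^{l}$ with $f_{i}(a,y)=0$ for all $i$ and $g_{j}(b,z)=0$ for all $j$. Since each $f_{i}$ depends only on the $X$- and $Y$-variables and each $g_{j}$ only on the $X'$- and $Z$-variables, the choices of $y$ and $z$ are independent, so this is equivalent to the conjunction $a\in D_{1}(F)$ and $b\in D_{2}(F)$, i.e.\ $(a,b)\in D_{1}(F)\times D_{2}(F)$.

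I do not expect any genuine obstacle; the only point requiring attention is the bookkeeping that ensures disjointness of the internal variable blocks, which is what allows the witnesses for $D_{1}$ and $D_{2}$ to be chosen independently. From the model-theoretic viewpoint of Remark~\ref{rem:modeltheory}, the construction is simply the observation that if $\varphi_{1}(x)$ and $\varphi_{2}(x')$ are existential formulas with disjoint bound-variable blocks, then $\varphi_{1}(x)\wedge\varphi_{2}(x')$ is again existential and defines exactly the product.
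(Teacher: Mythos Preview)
Your proposal is correct and follows essentially the same approach as the paper: take defining polynomials for $D_{1}$ and $D_{2}$ with all four variable blocks disjoint, and let the concatenated list $f_{1},\ldots,f_{r},g_{1},\ldots,g_{s}$ define $D_{1}\times D_{2}$. Your write-up is simply more detailed than the paper's, which states this in one sentence.
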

\begin{proof}
Suppose that the polynomials
$f_{1},...,f_{r}\in K[X_{1},...,X_{n_1},Y_{1},...,Y_{m}]$ define $D_{1}$ and
that the polynomials
$g_{1},...,g_{s}\in K[X'_{1},...,X'_{n_2},Z_{1},...,Z_{l}]$ define $D_{2}$.
This time, we suppose that all the variables $X_{i},X'_{i},Y_{i},Z_{i}$ are distinct.
Then the polynomials $f_{1},...,f_{r},g_{1},...,g_{s}$ define $D_{1}\times D_{2}$.
\end{proof}

\begin{lemma}\label{lem:dio.rational}
  Let $D$ be an $n$-dimensional diophantine family over $K$
and $f=(\frac{g_1}{h_1},\dots,\frac{g_k}{h_k})$ a tuple of rational functions with $g_i, h_i \in K[X_{1},\dots,X_{n}]$ such that for every $i$ the polynomials $g_i$ and $h_i$ are coprime.
Then there is an $k$-dimensional diophantine family $fD$ with
\[
 (fD)(F) = \left\{\left.\left(\frac{g_1(x)}{h_1(x)}, \dotsc, \frac{g_k(x)}{h_k(x)}\right)\;\right|\; x \in D(F), h_i(x) \neq 0 \text{ for all $i$}\right\}
\] 
for every $F/K$.
\end{lemma}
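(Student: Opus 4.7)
The plan is to start from a given set of defining polynomials $f_1,\dots,f_r\in K[X_1,\dots,X_n,Y_1,\dots,Y_m]$ for $D$, so that $D(F)=\{x\in F^n\mid \exists y\in F^m\bigwedge_j f_j(x,y)=0\}$, and then extend this existential description to incorporate the tuple of rational functions. The free variables of the new $k$-dimensional family will be fresh symbols $U_1,\dots,U_k$, standing for the coordinates of the output, while $X_1,\dots,X_n,Y_1,\dots,Y_m$ will become existentially quantified auxiliary variables.

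The key idea is to express the rational equations $U_i=g_i(X)/h_i(X)$ and the pole-avoidance condition $h_i(X)\neq 0$ by a single polynomial trick: I introduce further fresh auxiliary variables $Z_1,\dots,Z_k$ and impose the equations $h_i(X)Z_i-1=0$. Any solution in $F$ forces $h_i(x)\neq 0$ and $z_i=h_i(x)^{-1}$; having such an inverse on hand allows the rational equation to be written polynomially as $U_i-g_i(X)Z_i=0$.

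Concretely, I define $fD$ by the polynomials $f_1,\dots,f_r$ together with $h_i(X)Z_i-1$ and $U_i-g_i(X)Z_i$ for $i=1,\dots,k$, all regarded as elements of $K[U_1,\dots,U_k,X_1,\dots,X_n,Y_1,\dots,Y_m,Z_1,\dots,Z_k]$ with the $U_i$ as the $k$ free variables. Correctness is a direct unwinding: if $u\in F^k$ lifts to an $F$-rational solution $(u,x,y,z)$, then $f_j(x,y)=0$ for all $j$ gives $x\in D(F)$, $h_i(x)z_i=1$ gives both $h_i(x)\neq 0$ and $z_i=h_i(x)^{-1}$, and then $u_i=g_i(x)z_i=g_i(x)/h_i(x)$. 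Conversely, given any $x\in D(F)$ with $h_i(x)\neq 0$, a witness $y$ for $x\in D(F)$ together with $z_i:=h_i(x)^{-1}$ satisfies all the equations and yields the point $(g_1(x)/h_1(x),\dots,g_k(x)/h_k(x))$.

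There is no genuinely hard step: the entire argument rests on the standard observation that non-vanishing is diophantinely expressible via an inverse variable, a device already implicit in the definition of the Kochen ring $\Gamma_\mfp^\tau(F)$ with its denominators $1+t_\mfp b$. The coprimality hypothesis is not actually needed for this construction; it serves only to make the set-theoretic description of $(fD)(F)$ match the natural pole locus of $g_i/h_i$, so no normalization of the input fractions is required in the proof itself.
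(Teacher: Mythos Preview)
Your proof is correct and follows essentially the same approach as the paper: both introduce auxiliary inverse variables to encode the non-vanishing of the $h_i$ and then express the rational values polynomially, with only a cosmetic difference in how the value equation is written (the paper uses $g_i(x)-z_ih_i(x)=0$ rather than your $U_i-g_i(x)Z_i=0$). Your observation that the coprimality hypothesis is not used in the construction is also accurate.
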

\begin{proof}
  Let $f_1, \dotsc, f_r \in K[X_1, \dotsc, X_n, Y_1, \dotsc, Y_m]$ define $D$. Then a tuple $(z_1, \dotsc, z_k) \in F^k$ is 
  an element of the right hand side
  if and only if there exists $(x_1, \dotsc, x_n, y_1, \dotsc, y_m, w_1, \dotsc, w_k) \in F^{n+m+k}$ such that
  \begin{enumerate}
  	\item $g_i(x_1, \dotsc, x_n) - z_i h_i(x_1, \dotsc, x_n) = 0$ for all $i =1, \dotsc, k$,
  	\item $w_i h_i(x_1, \dotsc, x_n) = 1$ for all $i=1,\dots,k$, and
  	\item $f_j(x_1, \dotsc, x_n, y_1, \dotsc, y_m) = 0$ for all $j = 1, \dotsc, r$.
  \end{enumerate}
  Each of these conditions is the vanishing of a polynomial 
  in the variables $W_1,\dots,W_k$, $X_1,\dots,X_k$, $Y_1,\dots,Y_r$  and $Z_1,\dots,Z_k$ 
  over $K$.
\end{proof}

\begin{remark}\label{rmk:gamma.diophantine}
Perhaps the most trivial $1$-dimensional diophantine family over $K$ is the one assigning the set $F$ to every field $F/K$.
As described above in Section \ref{subsection:p-valuations}, given a rational function $\gamma\in K(X)$ and a field $F/K$,
we write $\gamma(F)$ to mean the image under $\gamma$ of $F\setminus\{\mbox{poles of }\gamma\}$.
By this small abuse of notation, $\gamma$ may be identified with the map which sends a field $F/K$ to its image $\gamma(F)$ under $\gamma$.
Then by Lemma \ref{lem:dio.rational},
$\gamma$ is a $1$-dimensional diophantine family over $K$.
This applies in particular to the Kochen operator
$\gamma_{\mfp, t_\mfp}^{\tau}$.
\end{remark}

\begin{lemma}\label{lem:dioph_section}
If $D$ is an $n$-dimensional diophantine family over $K$ and $a=(a_1,\dots,a_r)\in K^r$, $r<n$, then there is a $(n-r)$-dimensional family $D_a$ over $K$ with
$$
 D_a(F) = \{x\in F^{n-r} \;|\; (x,a)\in D(F)\}
$$
for every $F/K$.
\end{lemma}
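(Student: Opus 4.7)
The plan is to define $D_a$ by the obvious substitution of constants and check that this works uniformly across all extensions $F/K$. Since $a \in K^r$, the constants $a_1,\dots,a_r$ are available in every extension of $K$, so we can simply plug them into the defining polynomials of $D$.

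More concretely, suppose $f_1,\dots,f_s \in K[X_1,\dots,X_n,Y_1,\dots,Y_m]$ define $D$. I would set
$$
 f_i'(X_1,\dots,X_{n-r},Y_1,\dots,Y_m) := f_i(X_1,\dots,X_{n-r},a_1,\dots,a_r,Y_1,\dots,Y_m) \in K[X_1,\dots,X_{n-r},Y_1,\dots,Y_m]
$$
for $i=1,\dots,s$, and declare $D_a$ to be the $(n-r)$-dimensional diophantine family over $K$ defined by $f_1',\dots,f_s'$. By the definition of a diophantine family,
$$
 D_a(F) = \{x \in F^{n-r} \mid \exists y \in F^m: f_1'(x,y)=0,\dots,f_s'(x,y)=0\}
$$
for every extension $F/K$. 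The right-hand side is, by construction of the $f_i'$, equal to
$$
 \{x \in F^{n-r} \mid \exists y \in F^m: f_1(x,a,y)=0,\dots,f_s(x,a,y)=0\} = \{x \in F^{n-r} \mid (x,a) \in D(F)\},
$$
which is exactly the set described in the statement.

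There is no real obstacle here; the only thing to notice is that we had set up diophantine families as being defined by polynomials with coefficients in $K$, and since $a_1,\dots,a_r \in K$ the substituted polynomials $f_i'$ still have coefficients in $K$, so the construction stays within the framework. This is of course the family-analogue of the trivial fact that specialising free variables to constants preserves existential definability (Remark \ref{rem:modeltheory}).
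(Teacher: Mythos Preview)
Your proof is correct and essentially identical to the paper's own argument: both substitute the constants $a_1,\dots,a_r$ into the last $r$ of the $X$-variables of the defining polynomials and observe that the resulting polynomials still lie in $K[X_1,\dots,X_{n-r},Y_1,\dots,Y_m]$. Your added remark that this is the family version of specialising free variables in an existential formula is apt but not needed.
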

\begin{proof}
Again, let $f_{1},...,f_{r}\in K[X_{1},...,X_{n},Y_{1},...Y_{m}]$
define $D$.
We write
\begin{align*}
g_{i}(X_{1},...,X_{n-r},Y_{1},...,Y_{m})&:=f_{i}(X_{1},...,X_{n-r},a_{1},...,a_{r},Y_{1},...,Y_{m}).
\end{align*}
Then the polynomials $g_{1},...,g_{r}\in K[X_{1},...,X_{n-r},Y_{1},...,Y_{m}]$
define the $(n-r)$-dimensional diophantine family $D_{a}$ over $K$.
\end{proof}

\begin{example}\label{ex:Gamma.n.diophantine}
Each of the $R_{\mfp,n}^{\tau}$ is a $1$-dimensional diophantine family over $K$.
\end{example}

\begin{proposition}\label{prop:compactness}
Let $D,D_1,D_2,\dots$ be $n$-dimensional diophantine families over $K$.
If $D(F)\subseteq\bigcup_{i\in\mathbb{N}}D_i(F)$
for every extension $F/K$,
then there exists $N$ such that 
$D(F)\subseteq\bigcup_{i=1}^ND_i(F)$
for every extension $F/K$.
\end{proposition}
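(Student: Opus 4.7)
The plan is to apply the model-theoretic compactness theorem, leveraging the observation of Remark \ref{rem:modeltheory} that each diophantine family $D_i$ (and $D$) corresponds to an existential formula $\varphi_i(x_1,\dots,x_n)$ (respectively $\varphi(x_1,\dots,x_n)$) in the language $\mathcal{L}_{\mathrm{ring}}$ of rings with parameters from $K$, such that $D_i(F) = \{a \in F^n : F \models \varphi_i(a)\}$ for every extension $F/K$, and similarly for $D$.

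I would argue by contradiction. Suppose no uniform $N$ works. Then for every $N \in \mathbb{N}$ there is an extension $F_N/K$ and a tuple $a_N \in F_N^n$ satisfying $\varphi(a_N) \wedge \bigwedge_{i \le N} \neg\varphi_i(a_N)$. Consider the first-order theory $T$ in the language obtained from $\mathcal{L}_{\mathrm{ring}}$ by adjoining a constant symbol for every element of $K$ and $n$ further constants $c_1,\dots,c_n$, consisting of: the field axioms; the atomic diagram of $K$ (ensuring every model is a field extension of $K$); the sentence $\varphi(c_1,\dots,c_n)$; and the sentences $\neg\varphi_i(c_1,\dots,c_n)$ for every $i \in \mathbb{N}$.

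Any finite subset $T_0 \subseteq T$ mentions only finitely many of the $\neg\varphi_i$, say those with indices at most $N_0$. Interpreting the $c_j$ as the components of $a_{N_0}$ in $F_{N_0}$ provides a model of $T_0$, so every finite subset of $T$ is satisfiable. By the compactness theorem, $T$ itself has a model, which yields a field extension $F/K$ and a tuple $a \in F^n$ satisfying $\varphi$ but none of the $\varphi_i$; equivalently, $a \in D(F) \setminus \bigcup_{i \in \mathbb{N}} D_i(F)$, contradicting the hypothesis.

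There is no substantive obstacle here: the argument is a routine application of compactness. The only thing to verify with care is that the existential formulas produced by Remark \ref{rem:modeltheory} genuinely define $D_i(F)$ uniformly across all extensions $F/K$, which is immediate from the definition of a diophantine family. An alternative, purely algebro-geometric proof is possible via Noetherian induction on the $K$-variety $V$ whose image defines $D$, successively lifting the projection $V \to \mathbb{A}^n_K$ to one of the $D_i$ on a dense open subset determined at each generic point, but the compactness proof is shorter and better matches the framework of Remark \ref{rem:modeltheory}.
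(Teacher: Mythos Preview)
Your proposal is correct and follows exactly the approach of the paper: the paper's proof is a one-line appeal to the compactness theorem via Remark~\ref{rem:modeltheory}, and you have simply spelled out the standard details of that appeal.
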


\begin{proof}
In light of Remark \ref{rem:modeltheory},
this is a direct consequence of the compactness theorem of model theory,
see for example \cite[Theorem 2.1.4]{Mar02}.
\end{proof}

\begin{proposition}\label{prop:compactness_bounded_pi}
Let $D$ be a $1$-dimensional diophantine family over $K$
and let $\mathcal{K}$ be a class of extensions of $K$. 
If
\begin{enumerate}
\item[(i)]
$D(L)=R_\mfp^\tau(L)$ for every $L\in\mathcal{K}$, and
\item[(ii)]
$D(E)\subseteq \mathcal{O}_E$
for every finite extension $E/K_\mfp$ of relative type at most $\tau$,
\end{enumerate}
then there exists $N$ such that $\pi_\mfp^\tau(L)\leq N$ for every $L\in\mathcal{K}$.
\end{proposition}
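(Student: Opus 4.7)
The plan is to combine Proposition~\ref{prop:compactness} with the Ax--Kochen--Ershov transfer principle for $p$-adically closed fields. Applying Proposition~\ref{prop:compactness} to the given family $D$ and to the increasing chain of $1$-dimensional diophantine families $(R_{\mfp, n}^\tau)_{n \geq 1}$ (Example~\ref{ex:Gamma.n.diophantine}), whose union is $R_\mfp^\tau$, it will suffice to show that
\[
D(F) \subseteq R_\mfp^\tau(F) \quad \text{for every extension $F/K$,}
\]
not only for $L \in \mathcal{K}$. Once this is established, Proposition~\ref{prop:compactness} produces some $N$ with $D(F) \subseteq R_{\mfp, N}^\tau(F)$ for every $F/K$; for $L \in \mathcal{K}$, combining with hypothesis~(i) yields
\[
R_\mfp^\tau(L) = D(L) \subseteq R_{\mfp, N}^\tau(L) \subseteq R_\mfp^\tau(L),
\]
forcing $\pi_\mfp^\tau(L) \leq N$.

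To establish the key inclusion, I will fix an extension $F/K$, a prime $\mfP \in \mcS_\mfp^\tau(F)$, and an element $x \in D(F)$ witnessed by $y \in F^m$ with $f_1(x,y) = \ldots = f_r(x,y) = 0$ for the defining polynomials $f_i$ of $D$; my goal is then to prove $v_\mfP(x) \geq 0$. Supposing otherwise, I would follow \cite{PR84} and embed $(F, v_\mfP)$ into a $p$-adic closure $(F^*, v^*)$, i.e.\ a $p$-adically closed field of the same (hence at most $\tau$) relative type over $(K, \mfp)$. Since $D$ is a diophantine family, $x \in D(F^*)$, and $v^*(x) < 0$.

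The crucial tool is then the Ax--Kochen--Ershov theorem for $p$-adically closed fields: $F^*$ is elementarily equivalent, in the language of valued fields, to the finite extension $E$ of $K_\mfp$ of the same relative type $\tau' \leq \tau$. Consequently, the existential sentence
\[
\exists z\, \exists y_1 \cdots \exists y_m \colon \; v(z) < 0 \;\wedge\; \bigwedge_{i=1}^r f_i(z, y_1, \ldots, y_m) = 0
\]
holds in $F^*$ (witnessed by $x$ and $y$), and therefore also in $E$, producing an element $z \in D(E) \setminus \mathcal{O}_E$ and contradicting hypothesis~(ii).

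The main obstacle lies in this last step: one must correctly set up the $p$-adic closure so that the relative type over $(K, \mfp)$ is preserved, and then invoke Ax--Kochen--Ershov in a first-order language rich enough to express simultaneously the diophantine condition defining $D$ and membership in the valuation ring. This is standard in the Prestel--Roquette framework~\cite{PR84}, but some care is needed when $v_\mfP$ is of higher rank (its value group properly containing its smallest non-trivial convex subgroup $\mathbb{Z}$), in order to ensure that the embedding $F \hookrightarrow F^*$ genuinely preserves the property $v(x) < 0$.
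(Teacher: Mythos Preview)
Your proposal is correct and follows essentially the same route as the paper's proof: both establish $D(F)\subseteq R_\mfp^\tau(F)$ for every extension $F/K$ by passing to a $p$-adic closure and invoking Ax--Kochen--Ershov (the paper calls this the $p$-adic Lefschetz principle, citing \cite[Theorem~5.1]{PR84}), and then apply Proposition~\ref{prop:compactness} together with hypothesis~(i). The only cosmetic differences are that the paper argues directly that $D(F')\subseteq\mathcal{O}_{\mfP'}$ rather than by contradiction, and it pins down the local field $E$ explicitly as $F_0K_\mfp$ with $F_0$ the algebraic part of the closure, which also disposes of your worry about higher rank.
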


\begin{proof}
Let $F$ be any extension of $K$.
For $\mfP\in\mathcal{S}_\mfp^\tau(F)$
let $(F',\mfP')$ denote a $p$-adic closure of $(F,\mfP)$
(see \cite[\S3]{PR84}).
By the $p$-adic Lefschetz principle, the assumption (ii) implies that
$D(F')\subseteq\mathcal{O}_{\mfP'}$, in particular $D(F)\subseteq\mathcal{O}_{\mfP'}\cap F=\mathcal{O}_\mfP$.
(In model-theoretic terms, $F'$ is elementarily equivalent, in the language of valued fields, to a finite extension $E$
of $K_\mfp$ of relative type at most $\tau$.
More precisely, if $F_0$ denotes the algebraic part of $F'$, then both
$F_0 K_\mfp$ and $F'$ are elementary extensions of $F_0$
by \cite[Theorem 5.1]{PR84}.)
In particular,
$
 D(F)
 \subseteq \bigcap_{\mfP\in\mathcal{S}_\mfp^\tau(F)}\mathcal{O}_\mfP=R_\mfp^\tau(F)
$.
So since $R_\mfp^\tau(F)=\bigcup_{n=1}^{\infty}R_{\mfp,n}^{\tau}(F)$,
by Proposition \ref{prop:compactness} there exists $N$ such that
$D(F)\subseteq\bigcup_{n=1}^{N}R_{\mfp,n}^{\tau}(F)$ for every $F/K$.
In fact $(R_{\mfp,n}^{\tau}(F))_{n\in\mathbb{N}}$ is an increasing chain, so $D(F)\subseteq R_{\mfp,N}^{\tau}(F)$.
Thus for $L\in\mathcal{K}$,
(i) implies that
$R_\mfp^\tau(L)=D(L)\subseteq R_{\mfp,N}^{\tau}(L)$,
which shows that $\pi_\mfp^\tau(L)\leq N$.
\end{proof}

\begin{remark}
We also have the following converse: If $\pi_\mfp^\tau(L) \leq N$ for all $L \in \mathcal{K}$, then $D = R_{\mfp, N}^\tau$ is a diophantine family satisfying both conditions.
This indicates that while our definition of $\pi_\mfp^\tau$ depends on the construction of the height function on polynomials over $\mathcal{O}_\mfp$, the property of a class $\mathcal{K}$ to have bounded $(\mfp, \tau)$-Pythagoras number is a very robust notion and does not depend on the details of the height function.
\end{remark}

\begin{remark}
The notion that a class $\mathcal{K}$ has bounded
$(\mfp,\tau)$-Pythagoras number is robust in a further sense: under taking a suitable alternative for the Kochen operator.
Consider a rational function $\delta\in K(X)$
and suppose that 
$R_{\mfp}^{\tau}(F)$   
is the integral closure in $F$ of the ring
\begin{align*}
    R'(F)&:=\bigg\{\frac{a}{1+t_{\mfp}b}\;\bigg|\;a,b\in\mathcal{O}_{\mfp}[\delta(F)],1+t_{\mfp}b\neq0\bigg\},
\end{align*}
for every extension $F/K$.
We introduce a new $1$-dimensional diophantine family $R_{n}'$ over $K$,
by defining $R_{n}'(F)$ in terms of $\delta$ exactly as $R_{\mfp,n}(F)$ is defined in terms of $\gamma_{\mfp,t_{\mfp}}^{\tau}$.
Then
$$
    R_{\mfp}^{\tau}(F)=\bigcup_{n=1}^\infty R'_{n}(F),
$$
for all $F/K$.
Simply adapting the proof of Proposition \ref{prop:compactness_bounded_pi},
a class $\mathcal{K}$ of extensions of $K$
has bounded $(\mfp,\tau)$-Pythagoras number
if and only if
there is $M\in\mathbb{N}$ such that
$R'_{M}(L)=R_{\mfp}^{\tau}(L)$, for all $L\in\mathcal{K}$.
Also note that at least in the case $\tau=(1,1)$,
the Kochen operator $\gamma_{\mfp,t_\mfp}^\tau$ is universal in the sense that
every such $\delta$ is in fact a rational function in $\gamma_{\mfp,t_\mfp}^\tau$, 
see \cite[Corollary 7.12]{PR84}.
\end{remark}

\section{The \texorpdfstring{$(\mfp,\tau)$}{(p,τ)}-Pythagoras number of number fields}
\label{section:number.fields}

Introduced by Poonen (\cite{Poo09}),
and subsequently used and developed by others including Koenigsmann (\cite{Koe16}) and the second author (\cite{Dit18}),
the following diophantine predicates behave well in local fields, and satisfy a strong local-global principle.
They are defined from central simple algebras.
For further details about central simple algebras, the Brauer group, and associated local-global principles, see \cite[Section 6.3]{NSW}.

Let $A$ be a central simple algebra of prime degree $\ell$ 
over a field $F$.
Following
\cite[Section 2]{Dit18}, we let
\begin{align*}
S_{A}(F)&:=\Big\{\mathrm{Trd}(x)\;\Big|\;x\in A,\mathrm{Nrd}(x)=1\Big\}\subseteq F,
\end{align*}
where $\mathrm{Trd}$ and $\mathrm{Nrd}$ are the reduced norm and reduced trace, see \cite[Construction 2.6.1]{GS06} for details.
We also define
\begin{align*}
T_{A}(F)&:=\left\{\begin{array}{ll}
S_{A}(F)&\text{if $\ell>2$},\\
S_{A}(F)-S_{A}(F)&\text{if $\ell=2$}.
\end{array}\right.
\end{align*}

If $A$ is a central simple algebra over $F$ and $E/F$ is any extension, we view $A_E:=A\otimes_{F}E$ as a central simple algebra over $E$
and write
$S_{A}(E):=S_{A_E}(E)$ and $T_{A}(E):=T_{A_E}(E)$.

\begin{lemma}\label{lem:S.T.dio}
Both $S_{A}$ and $T_{A}$ are $1$-dimensional diophatine families over $F$.
\end{lemma}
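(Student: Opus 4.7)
The plan is to exhibit explicit defining polynomials for both families, relying on the fact that the reduced norm and reduced trace of a central simple algebra are, once a basis is chosen, polynomial functions in the coordinates, and that these polynomials behave well under base change.

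First I would fix an $F$-basis $e_{1},\dots,e_{\ell^{2}}$ of $A$ and identify $A$ with $F^{\ell^{2}}$ via this basis. By the construction of the reduced norm and reduced trace (cf.\ \cite[Construction 2.6.1]{GS06}), there exist polynomials $N,T\in F[Y_{1},\dots,Y_{\ell^{2}}]$ such that for every $x=\sum_{i}y_{i}e_{i}\in A$,
\[
\mathrm{Nrd}(x)=N(y_{1},\dots,y_{\ell^{2}}),\qquad\mathrm{Trd}(x)=T(y_{1},\dots,y_{\ell^{2}}).
\]
The key observation is that for any extension $E/F$, the tensor product $A_{E}=A\otimes_{F}E$ has $e_{1}\otimes 1,\dots,e_{\ell^{2}}\otimes 1$ as an $E$-basis, and the reduced norm and trace of $A_{E}/E$ are computed by the \emph{same} polynomials $N,T$ on $E^{\ell^{2}}$; this is immediate from the functoriality of $\mathrm{Nrd}$ and $\mathrm{Trd}$ under scalar extension.

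Consequently, for every extension $E/F$,
\[
S_{A}(E)=\bigl\{z\in E\;\bigl|\;\exists\,y_{1},\dots,y_{\ell^{2}}\in E:\;N(y_{1},\dots,y_{\ell^{2}})-1=0\;\wedge\;T(y_{1},\dots,y_{\ell^{2}})-z=0\bigr\}.
\]
Thus the two polynomials $N(Y_{1},\dots,Y_{\ell^{2}})-1$ and $T(Y_{1},\dots,Y_{\ell^{2}})-X$ in $F[X,Y_{1},\dots,Y_{\ell^{2}}]$ define $S_{A}$ as a $1$-dimensional diophantine family over $F$.

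For $T_{A}$, in the case $\ell>2$ there is nothing to do since $T_{A}=S_{A}$. For $\ell=2$, we have $T_{A}=S_{A}-S_{A}$, which is the image of $S_{A}\times S_{A}$ under the rational (in fact polynomial) map $(z_{1},z_{2})\mapsto z_{1}-z_{2}$; applying Lemma~\ref{lem:dio.2} to form $S_{A}\times S_{A}$ and then Lemma~\ref{lem:dio.rational} to push it forward along this map yields the desired $1$-dimensional diophantine family. There is no real obstacle here: the only point deserving care is the invariance of the polynomial expressions for $\mathrm{Nrd}$ and $\mathrm{Trd}$ under base change, which is standard.
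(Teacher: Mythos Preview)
Your argument is correct: the reduced norm and reduced trace are indeed given by fixed polynomials over $F$ in the coordinates with respect to a chosen basis, and these polynomials are unchanged under scalar extension, so $S_{A}$ is visibly a $1$-dimensional diophantine family; the passage to $T_{A}$ via Lemmas~\ref{lem:dio.2} and~\ref{lem:dio.rational} (which hold verbatim with $K$ replaced by any base field) is routine. The paper itself does not give an argument but simply cites \cite[Lemma 2.12]{Dit18}; what you have written is exactly the standard proof one finds behind that citation, so there is no substantive difference in approach.
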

\begin{proof}
This is shown in \cite[Lemma 2.12]{Dit18} and the subsequent discussion.
\end{proof}

Recall that $A$ is {\bf split} if it is isomorphic to a matrix algebra over $F$, and $A$ splits over $E$ if $A_E$ is split.
The behaviour of $S_{A}$ and $T_{A}$ in a completion $F$ of a number field $L$ is determined by whether or not $A$ splits over $F$, and the behaviour of $S_{A}$ and $T_{A}$ in $L$ is controlled by a local-global principle,
which leads to the following:

\begin{proposition}[{\cite[Proposition 2.9]{Dit18}}]\label{prp:Philip.Tcomputation}
  Let $L$ be a number field and $A$ a central simple algebra over $L$ of prime degree $\ell$ which splits over all real completions of $L$.
  Then
  \[ T_A(L) = \bigcap_{\mfp} \mathcal{O}_\mfp ,\]
  where the intersection is over the finitely many finite primes $\mfp$ of $L$ such that $A$ does not split over $L_\mfp$.
\end{proposition}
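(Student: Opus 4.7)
The plan is to prove the two inclusions separately, exploiting the local behaviour of $T_A$ at each completion of $L$.

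For the inclusion $T_A(L) \subseteq \bigcap_{\mfp} \mathcal{O}_\mfp$, I would use the functoriality $T_A(L) \subseteq T_A(L_\mfp)$ coming from $L \hookrightarrow L_\mfp$, then analyse $T_A(L_\mfp)$ at the finitely many ramified primes. Since $\ell$ is prime, over a local field the algebra $A_{L_\mfp}$ is either split or a central division algebra; in the latter case any $a \in A_{L_\mfp}$ with $\mathrm{Nrd}(a)=1$ has minimal polynomial over $L_\mfp$ with constant term $\pm 1$, so $a$ is integral over $\mathcal{O}_{L_\mfp}$ and hence $\mathrm{Trd}(a) \in \mathcal{O}_{L_\mfp}$. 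This yields $S_A(L_\mfp) \subseteq \mathcal{O}_{L_\mfp}$, and since $\mathcal{O}_{L_\mfp}$ is a ring, $T_A(L_\mfp) \subseteq \mathcal{O}_{L_\mfp}$ as well.

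For the reverse inclusion, my strategy is to invoke a local-global principle for embeddings of étale $L$-algebras of dimension $\ell$ into $A$ (a standard consequence of Albert--Brauer--Hasse--Noether). Given $x$ in the intersection, and assuming $\ell$ odd for the moment, consider the polynomial $f_x(X) := X^\ell - xX^{\ell-1} + 1$; any root of $f_x$ in $A$ automatically has reduced trace $x$ and reduced norm $1$, so it suffices to show that the étale algebra $E_x := L[X]/(f_x)$ embeds into $A$. The local-global principle reduces this to local embeddability at each place $v$ of $L$: at archimedean places and at finite primes where $A$ splits, $A_{L_v}$ is a matrix algebra and every $\ell$-dimensional étale algebra embeds trivially; at a ramified finite prime $\mfp$ we have $x \in \mathcal{O}_\mfp$ by hypothesis, and one constructs an element of the unramified degree-$\ell$ subfield $M$ of $A_{L_\mfp}$ with the required trace and norm via Hensel's lemma, starting from a solution in the residue field.

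The case $\ell = 2$ requires additional flexibility: since $T_A = S_A - S_A$, I would write $x = (x+c) - c$ for a cleverly chosen $c \in L$ so that the quadratic polynomials $X^2 - (x+c)X + 1$ and $X^2 - cX + 1$ each give a quadratic étale algebra that embeds into $A$ by the same local-global argument, with $c$ chosen to avoid the degenerate discriminant values and to stay integral at the ramified primes.

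The principal obstacle is the local analysis at ramified primes: one must verify that the chosen polynomial defines an étale algebra admitting a local embedding in $A_{L_\mfp}$, which forces careful choices to control the discriminant and to ensure separability. The secondary subtlety is the archimedean hypothesis: splitting of $A$ at every real place is exactly what eliminates the sign constraint that otherwise arises from Hamilton's quaternions (where reduced traces of norm-$1$ elements are bounded), and it is this hypothesis that permits the unrestricted intersection on the right-hand side.
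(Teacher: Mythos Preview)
The paper does not prove this proposition: it is cited from \cite[Proposition~2.9]{Dit18} with no argument supplied, so there is nothing in the present paper to compare your proposal against.

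On the merits of your sketch, the strategy is the natural one and in the spirit of the cited reference, but two points need repair. For the easy inclusion, ``constant term $\pm 1$, so $a$ is integral'' is not a valid implication on its own; the correct step is that the valuation on $L_\mfp$ extends uniquely to the division algebra $A_{L_\mfp}$ by $w(a)=\tfrac{1}{\ell}v_\mfp(\mathrm{Nrd}(a))$, so $\mathrm{Nrd}(a)=1$ places $a$ in the maximal order, and \emph{then} integrality over $\mathcal{O}_\mfp$ gives $\mathrm{Trd}(a)\in\mathcal{O}_\mfp$ (this is essentially Proposition~\ref{prp:Philip.3}, also cited without proof). For the hard inclusion there is first a sign slip --- the constant term of the reduced characteristic polynomial is $(-1)^\ell\mathrm{Nrd}(a)$, so for odd $\ell$ you want $f_x(X)=X^\ell-xX^{\ell-1}-1$ --- and, more substantively, an irreducibility gap. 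Since $A$ is non-split of prime degree over the number field $L$, it is itself a division algebra; hence an $L$-algebra embedding $L[X]/(f_x)\hookrightarrow A$ exists only if $f_x$ is irreducible over $L$, and likewise the local embedding into the division algebra $A_{L_\mfp}$ at a ramified prime forces $f_x$ to be irreducible over $L_\mfp$. Your rigid choice of $f_x$ does not guarantee this, and a Hensel lift from the residue field will not produce irreducibility in general. The fix is to let the intermediate coefficients of the degree-$\ell$ polynomial vary and use approximation to arrange irreducibility over each $L_\mfp$ at the finitely many ramified primes simultaneously; your treatment of $\ell=2$ via the auxiliary shift $c$ already has the right shape for this extra flexibility.
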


\begin{proposition}[see {\cite[Proposition 2.6]{Dit18}}]\label{prp:Philip.3}
Let $F$ be a non-archimedean local field
of characteristic zero
and let $A$ be a central simple algebra over $F$ of prime degree $\ell$.
If $A$ is non-split then
$T_{A}(F)=\mathcal{O}_{F}$.
\end{proposition}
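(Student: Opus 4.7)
The plan is to exploit the fact that, since $A$ is a central simple algebra of prime degree $\ell$ over a non-archimedean local field of characteristic zero and is non-split, $A$ must be a division algebra of dimension $\ell^{2}$ over $F$. I would start by invoking the standard theory of division algebras over local fields: the valuation on $F$ extends uniquely to a (non-commutative) valuation $w$ on $A$, taking values in $\frac{1}{\ell}\mathbb{Z}$, and one has $v_F(\mathrm{Nrd}(x)) = \ell\cdot w(x)$ for every $x \in A^{\times}$, while $\mathrm{Trd}$ sends the valuation ring $\mathcal{O}_A$ into $\mathcal{O}_F$. Combining these two facts gives the easy inclusion: any $x \in A$ with $\mathrm{Nrd}(x)=1$ satisfies $w(x)=0$, so $x \in \mathcal{O}_A$ and hence $\mathrm{Trd}(x) \in \mathcal{O}_F$. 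Thus $S_A(F) \subseteq \mathcal{O}_F$, and since $\mathcal{O}_F$ is a ring this implies $T_A(F) \subseteq \mathcal{O}_F$ in both the $\ell>2$ and the $\ell=2$ cases.

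For the other direction I would reduce the realization of a prescribed trace to a polynomial problem. For any $x \in A \setminus F$, the field $L := F(x) \subseteq A$ is a degree-$\ell$ extension of $F$, the reduced characteristic polynomial of $x$ coincides with the minimal polynomial, has the form $X^{\ell} - \mathrm{Trd}(x) X^{\ell-1} + \cdots + (-1)^{\ell}\mathrm{Nrd}(x)$, and conversely every degree-$\ell$ extension $L/F$ embeds into $A$ because $[A] \in \mathrm{Br}(F) = \mathbb{Q}/\mathbb{Z}$ has order dividing $\ell$ and the restriction $\mathrm{Res}_{L/F}$ is multiplication by $\ell$, hence kills $[A]$. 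Therefore, in order to realize $\alpha \in \mathcal{O}_F$ as $\mathrm{Trd}(x)$ with $\mathrm{Nrd}(x)=1$, it is enough to exhibit an irreducible monic polynomial $g(X) \in F[X]$ of degree $\ell$ with second coefficient $-\alpha$ and constant coefficient $(-1)^{\ell}$.

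For odd $\ell$ I would produce such a $g$ directly: for example, picking the remaining middle coefficients in the maximal ideal so that the Newton polygon of $g(X) - (-1)^{\ell}$ forces $g$ to be Eisenstein-like, or alternatively arranging that $\bar g$ factors in $\kappa[X]$ as an irreducible polynomial of degree $\ell$ over the residue field, whose lift is then irreducible by Hensel's lemma. This gives $\alpha \in S_A(F) = T_A(F)$. For $\ell=2$ the polynomial is forced to be $X^{2}-\alpha X+1$, which is irreducible precisely when $\alpha^{2}-4$ is a non-square in $F$, so we cannot in general represent $\alpha$ by a single trace; instead I would write $\alpha = t_1 - t_2$ with $t_1,t_2\in F$ chosen so that both $t_i^{2}-4$ are non-squares (using that the set of non-squares of $F^{\times}$ is large, e.g.\ it is a union of three cosets modulo $F^{\times 2}$ and one can easily shift an arbitrary $\alpha \in \mathcal{O}_F$ into such a pair), giving $\alpha \in S_A(F) - S_A(F) = T_A(F)$.

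The main obstacle is the last part of Step~3/Step~4, namely constructing irreducible monic polynomials with prescribed trace and norm coefficients. For $\ell$ odd one has enough free middle coefficients to force irreducibility by a residue-field or Newton-polygon argument, but the work lies in checking that such a construction succeeds uniformly in $\alpha \in \mathcal{O}_F$ (including the problematic case where $\alpha$ is a unit and reduction to $\kappa$ does not immediately help). For $\ell=2$ the genuine issue is that $S_A(F)$ itself is not all of $\mathcal{O}_F$, and one must carefully argue that the difference $S_A(F)-S_A(F)$ still fills out $\mathcal{O}_F$; this is where the hypothesis that $A$ is non-split (so that every quadratic extension of $F$ splits $A$) is really used.
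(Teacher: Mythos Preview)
The paper does not give a self-contained proof of this proposition: it simply cites \cite[Proposition~2.6]{Dit18} and adds the one-line observation that a non-split central simple algebra of prime degree is automatically a division algebra, so that the cited result (stated for division algebras) applies. Your proposal therefore goes considerably further than the paper itself, sketching an actual argument from first principles.

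Your strategy is the right one and matches the standard approach. The inclusion $T_A(F)\subseteq\mathcal{O}_F$ via the extended valuation on the division algebra is clean and correct. The reverse inclusion, reduced via the splitting-field criterion to producing irreducible degree-$\ell$ polynomials with prescribed second and constant coefficients, is exactly how one proceeds, and you correctly flag this construction as the crux. A word of caution on the details you leave open: for odd $\ell$ the constant term $(-1)^\ell=-1$ is a unit, so a straight Eisenstein/Newton-polygon argument is not available, and a residue-field irreducibility argument can fail when $q$ is small relative to $\ell$ (there may be no generator of $\mathbb{F}_{q^\ell}$ with the prescribed trace and norm); one typically needs a more careful construction or a Hensel-type lift from a slightly larger class of residue polynomials. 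For $\ell=2$ the decomposition $\alpha=t_1-t_2$ with both $t_i^2-4$ non-squares is the right idea but is delicate in residue characteristic~$2$, where $F^\times/F^{\times 2}$ is larger and the square classes behave differently. Filling in these constructions is essentially the content of the proof in \cite{Dit18} that the paper is invoking, so your outline is correct but not yet complete.
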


Note that \cite[Proposition 2.6]{Dit18} is stated for central division algebras of prime degree,
but a non-split central simple algebra of prime degree is a division algebra.

Recall that above we fixed a number field $K$, a finite place $\mfp$ of $K$, and a pair $\tau=(e,f)\in\mathbb{N}^{2}$.
Given this data $(K,\mfp,\tau)$, we now describe a choice of algebras $A,B$ over $K$.

\begin{proposition}\label{prp:AB.1}
For every prime number $\ell$
there exist central simple algebras $A,B$ of degree $\ell$ over $K$ such that
\begin{enumerate}
\item neither of them splits over $K_\mfp$,
\item for every finite place $\mfq\neq\mfp$ of $K$, at least one of them splits over $K_\mfq$,
\item for every infinite place $\mfq$ of $K$, both of them split over $K_\mfq$.
\end{enumerate}
\end{proposition}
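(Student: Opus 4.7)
The plan is to use the Albert-Brauer-Hasse-Noether exact sequence
$$
0 \to \Br(K) \to \bigoplus_{\mfq} \Br(K_{\mfq}) \xrightarrow{\sum_{\mfq} \inv_{\mfq}} \mathbb{Q}/\mathbb{Z} \to 0
$$
for number fields (see e.g.~\cite[Section 8.1]{NSW}) to construct Brauer classes of order $\ell$ with prescribed local invariants, and then represent these classes by central division algebras of degree $\ell$.

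Since $K$ has infinitely many finite primes, I choose two distinct finite primes $\mfq_{1}\neq\mfq_{2}$ of $K$, both different from $\mfp$. I then define $\alpha\in\Br(K)$ by setting $\inv_{\mfp}(\alpha)=\frac{1}{\ell}$, $\inv_{\mfq_{1}}(\alpha)=-\frac{1}{\ell}$, and $\inv_{\mfq}(\alpha)=0$ at every other place, and $\beta\in\Br(K)$ analogously with $\mfq_{1}$ replaced by $\mfq_{2}$. Both sums of local invariants vanish in $\mathbb{Q}/\mathbb{Z}$, so $\alpha$ and $\beta$ exist by exactness of the sequence, and each has order exactly $\ell$ by construction. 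Since over a number field the period of a Brauer class equals its index, the central division algebras $A$ and $B$ representing $\alpha$ and $\beta$ have degree $\ell$, hence are the sought central simple algebras.

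Conditions (1)--(3) are then immediate from the construction. Condition (1) holds because $\inv_{\mfp}(\alpha)\neq 0\neq \inv_{\mfp}(\beta)$, so neither algebra splits over $K_{\mfp}$. For a finite place $\mfq\neq\mfp$: if $\mfq\notin\{\mfq_{1},\mfq_{2}\}$ both algebras have trivial invariant and hence split over $K_{\mfq}$, while if $\mfq=\mfq_{1}$ (resp.~$\mfq=\mfq_{2}$) then $B$ (resp.~$A$) has trivial invariant there, giving condition (2). For infinite $\mfq$, the invariants are $0$ by construction; when $\ell$ is odd this is in fact forced, since $\Br(\mathbb{R})\cong\frac{1}{2}\mathbb{Z}/\mathbb{Z}$ has no non-trivial $\ell$-torsion and $\Br(\mathbb{C})=0$, while for $\ell=2$ setting the real invariants to $0$ is compatible with the sum-of-invariants condition as already arranged. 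I do not anticipate any genuine obstacle here beyond the bookkeeping of local invariants and invoking period = index over number fields.
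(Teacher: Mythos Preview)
Your proof is correct and follows essentially the same approach as the paper: both use the Albert--Brauer--Hasse--Noether sequence to prescribe local invariants $\frac{1}{\ell}$ at $\mfp$ and $-\frac{1}{\ell}$ (equivalently $\frac{\ell-1}{\ell}$) at two auxiliary finite places $\mfq_1,\mfq_2$, then invoke period${}={}$index over number fields to obtain division algebras of degree $\ell$.
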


\begin{proof}
The Brauer equivalence classes $[A]$ of central simple algebras $A$ over a field $F$ form the Brauer group $\Br(F)$ of $F$,
see \cite[(6.3.2) Definition]{NSW}.
For
an extension $F/K$, there is a group homomorphism
$\Br(K)\longrightarrow\Br(F)$
given by $[A]\longmapsto[A_F]$.
Moreover, the local Hasse invariant is an isomorphism
\begin{align*}\label{eq:inv}\tag{a}
    \inv_{K_{\mfq}}:\Br(K_{\mfq})&\longrightarrow
    \left\{\begin{array}{ll}
    \mathbb{Q}/\mathbb{Z} & \text{if $\mfq$ is finite},\\
    \frac{1}{2}\mathbb{Z}/\mathbb{Z} & \text{if $\mfq$ is infinite and $K_\mfq\cong\mathbb{R}$},\\
    0&\text{if $\mfq$ is infinite and $K_\mfq\cong\mathbb{C}$}
\end{array}\right.
\end{align*}
and so $A$ splits over $K_\mfq$ if and only if ${\rm inv}_{K_\mfq}([A])=0$.
There will be no ambiguity if we write
$\inv_{K_{\mfq}}([A])=\inv_{K_{\mfq}}([A_{K_{\mfq}}])$.
Note that each of the local Hasse invariants
$\inv_{K_{\mfq}}$ takes its values in $\mathbb{Q}/\mathbb{Z}$.

The Albert--Brauer--Hasse--Noether Theorem ({\cite[(8.1.17) Theorem]{NSW}}) gives the exact sequence
\begin{align}\label{eq:ABHN}\tag{b}
0\longrightarrow\Br(K)\xrightarrow{\hspace*{8mm}}\bigoplus_{\mfq\in \mathbb{S}(K)}\Br(K_{\mfq})\xrightarrow{\hspace*{2mm}\inv_{K}\hspace*{2mm}}\mathbb{Q}/\mathbb{Z}\longrightarrow0,
\end{align}
where 
$\mathbb{S}(K)$ is the set of (finite and infinite) places of $K$,
and $\inv_{K}$ is the sum of the local invariant maps
$\inv_{K_{\mfq}}$.

Fix 
two distinct finite places $\mfq_{1},\mfq_{2}\neq\mfp$ of $K$.
We define two sequences
$(a_{\mfq})_{\mfq\in\mathbb{S}(K)}$ and
$(b_{\mfq})_{\mfq\in\mathbb{S}(K)}$
of rational numbers, indexed by the places of $K$, by
\begin{itemize}
\item
$a_{\mfp}=b_{\mfp}=\ell^{-1}$,
\item
$a_{\mfq_{1}}=(\ell-1)\ell^{-1}$ and $b_{\mfq_{1}}=0$,
\item
$a_{\mfq_{2}}=0$ and $b_{\mfq_{2}}=(\ell-1)\ell^{-1}$,
\item
$a_{\mfq}=b_{\mfq}=0$,
for every other place $\mfq$.
\end{itemize}
Note that only finitely many of the elements of these sequences are nonzero.
Thus, by applying the inverses of the local Hasse invariants
from (\ref{eq:inv}),
the sequences $(a_{\mfq})_\mfq$ and $(b_{\mfq})_\mfq$ correspond to elements of the direct sum
$\bigoplus_{\mfq}\Br(K_{\mfq})$.
We also note the sums
$$
\sum_{\mfq\in\mathbb{S}(K)}a_{\mfq}=\sum_{\mfq\in\mathbb{S}(K)}b_{\mfq}=0\quad\mbox{in }\mathbb{Q}/\mathbb{Z}.
$$
By the exactness of the short exact sequence (\ref{eq:ABHN}),
we get (unique)
equivalence classes $[A]$ and $[B]$ in $\Br(K)$
such that
$\inv_{K_{\mfq}}([A])=a_{\mfq}+\mathbb{Z}$ and 
$\inv_{K_{\mfq}}([B])=b_{\mfq}+\mathbb{Z}$,
for all $\mfq\in\mathbb{S}(K)$.
Thus both $[A]$ and $[B]$ are of period $\ell$.
As $K$ is a number field, this implies that they are also of index $\ell$ (\cite[32.19]{Reiner}),
which means that if $A$ and $B$ denote the unique division algebras in $[A]$ respectively $[B]$, 
then these are of degree $\ell$.
\end{proof}

\begin{proposition}\label{prp:AB.2}
Let $\ell$ be a prime number with $\ell>ef$.
If $A$ and $B$ are algebras as in Proposition \ref{prp:AB.1}, then 
\begin{enumerate}
\item[(i)]
for all finite extensions $E/K_{\mfp}$ of relative type at most $\tau$,
\begin{align*}
T_{A}(E)+T_{B}(E)&= \mathcal{O}_{E};
\end{align*}
\item[(ii)]
and for all number fields $L/K$,
\begin{align*}
T_{A}(L)+T_{B}(L)&\supseteq\bigcap_{\mfP\in\mathcal{S}_{\mfp}^{*}(L)}\mathcal{O}_{\mfP}.
\end{align*}
\end{enumerate}
\end{proposition}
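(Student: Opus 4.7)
The plan is to handle the two parts separately, using Propositions \ref{prp:Philip.Tcomputation} and \ref{prp:Philip.3} together with the splitting data recorded in Proposition \ref{prp:AB.1}.

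For part (i), the key observation is that $[E:K_\mfp] = e(E|K_\mfp)\cdot f(E|K_\mfp) \leq ef < \ell$. By Proposition \ref{prp:AB.1}(1), $A_{K_\mfp}$ is non-split of prime degree $\ell$ over $K_\mfp$, so it is a division algebra. Since the local Hasse invariant multiplies by $[E:K_\mfp]$ under base change, and $\ell \nmid [E:K_\mfp]$, the algebra $A_E$ remains non-split. Proposition \ref{prp:Philip.3} then gives $T_A(E) = \mathcal{O}_E$, and the same argument yields $T_B(E) = \mathcal{O}_E$, whence $T_A(E) + T_B(E) = \mathcal{O}_E$.

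For part (ii), I first verify that Proposition \ref{prp:Philip.Tcomputation} applies to $A_L$ and $B_L$: a real place of $L$ lies above a real place of $K$, and by Proposition \ref{prp:AB.1}(3) the algebras $A$ and $B$ split at every infinite place of $K$, so they split at every real completion of $L$. Hence
\[
T_A(L) = \bigcap_{\mfP \in S_A} \mathcal{O}_\mfP, \qquad T_B(L) = \bigcap_{\mfP \in S_B} \mathcal{O}_\mfP,
\]
where $S_A, S_B$ are the (necessarily finite) sets of finite primes of $L$ where $A_L$, respectively $B_L$, do not split. A prime $\mfP \in S_A$ lies above some finite place $\mfq$ of $K$ at which $A$ is non-split; by Proposition \ref{prp:AB.1}(2), if $\mfq \neq \mfp$ then $B$ splits over $K_\mfq$, so $\mfP \notin S_B$. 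Consequently $S_A \cap S_B \subseteq \mathcal{S}_\mfp^*(L)$.

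To conclude, take $x \in \bigcap_{\mfP \in \mathcal{S}_\mfp^*(L)} \mathcal{O}_\mfP$. By weak approximation applied at the finite set $S_A \cup S_B$, one finds $b \in L$ with $v_\mfP(b) \geq 0$ for all $\mfP \in S_B$ and $v_\mfP(b-x) \geq 0$ for all $\mfP \in S_A \setminus S_B$. For $\mfP \in S_A \cap S_B$ we have $\mfP \mid \mfp$, so $v_\mfP(x) \geq 0$, and the condition $v_\mfP(b) \geq 0$ automatically yields $v_\mfP(b-x) \geq 0$. Setting $a := x - b$, we conclude $b \in T_B(L)$, $a \in T_A(L)$, and $x = a + b$. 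The main obstacle is the bookkeeping in this final approximation step, namely verifying that the splitting hypotheses of Proposition \ref{prp:AB.1} force $S_A \cap S_B \subseteq \mathcal{S}_\mfp^*(L)$ and that the local conditions on $b$ at the primes of $S_A \triangle S_B$ and of $S_A \cap S_B$ are simultaneously satisfiable; everything else is a direct application of the tools at hand.
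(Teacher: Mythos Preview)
Your proof is correct and follows essentially the same approach as the paper: part (i) is identical (you invoke the behaviour of the local Hasse invariant under base change where the paper cites \cite[Corollary 4.5.9]{GS06}, but these amount to the same thing), and in part (ii) you simply spell out in detail the weak approximation step that the paper compresses into the single equality $\bigcap_{A \text{ not split}} \mathcal{O}_\mfQ + \bigcap_{B \text{ not split}} \mathcal{O}_\mfQ = \bigcap_{\text{both not split}} \mathcal{O}_\mfQ$. Your explicit verification that $A_L$ and $B_L$ split at all real places of $L$ (needed to invoke Proposition~\ref{prp:Philip.Tcomputation}) is a nice addition that the paper leaves implicit.
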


\begin{proof}
First, suppose that $E/K_{\mfp}$ is a finite extension of relative type at most $\tau$.
Thus $[E:K_{\mfp}]\leq ef<\ell$,
so since  $A$ and $B$ do not split over $K_\mfp$, 
they also do not split over $E$ by \cite[Corollary 4.5.9]{GS06}.
Therefore we may apply Proposition \ref{prp:Philip.3} to obtain
\begin{align*}
T_{A}(E)+T_{B}(E)&=\mathcal{O}_{E}+\mathcal{O}_{E}=\mathcal{O}_{E}.
\end{align*}

Next, let $L/K$ be any number field and let $\mfQ$ be a prime of $L$ which lies over a prime $\mfq$ of $K$.
If $\mfq \neq \mfp$, then at least one of $A$ and $B$ splits over $K_\mfq$ and therefore also over the completion $L_\mfQ$ by construction.
Hence
\begin{align*}
  T_A(L) + T_B(L) &= \bigcap_{\stackrel{\mfQ\in\mathcal{S}(L)}{A_{L_\mfQ} \text{ not split}}} \mathcal{O}_\mfQ + \bigcap_{\stackrel{\mfQ\in\mathcal{S}(L)}{B_{L_\mfQ} \text{ not split}}} \mathcal{O}_\mfQ 
  = \bigcap_{\stackrel{\mfQ\in\mathcal{S}(L)}{A_{L_\mfQ} \text{ and }B_{L_\mfQ} \text{ not split}}} \mathcal{O}_\mfQ 
  \supseteq \quad \bigcap_{\mfP\in\mathcal{S}_{\mfp}^{*}(L)} \mathcal{O}_\mfP,
\end{align*}
where the first equality is Proposition \ref{prp:Philip.Tcomputation} and the second equality follows from weak approximation (see e.g.~\cite[1.1.3]{EP05}).
\end{proof}

As before, fix a uniformizer $t_\mfp \in K$ of $\mfp$.
For central simple algebras $A,B$ over $K$ and
an extension $F/K$ we define $D_{\mfp,t_\mfp,A,B}^{\tau}(F)$ as
$$
    \bigg\{\frac{x}{1+t_{\mfp}w^{e+1}y}\;\bigg|\;x,y\in T_{A}(F)+T_{B}(F),w\in\gamma_{\mfp, t_\mfp}^\tau(F), 1+t_{\mfp}w^{e+1}y \neq 0 \bigg\}.
$$

\begin{lemma}\label{lem:D.dio}
$D_{\mfp,t_\mfp,A,B}^{\tau}$ is a $1$-dimensional diophantine family over $K$.
\end{lemma}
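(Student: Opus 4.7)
The plan is to build $D_{\mfp,t_\mfp,A,B}^{\tau}$ by iterating the closure properties of diophantine families established in Section \ref{section:diophantine}.

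First, I would observe that $T_A$ and $T_B$ are $1$-dimensional diophantine families over $K$ by Lemma \ref{lem:S.T.dio}. To obtain $T_A + T_B$ as a $1$-dimensional diophantine family, I would combine Lemma \ref{lem:dio.2} and Lemma \ref{lem:dio.rational}: the product $T_A\times T_B$ is a $2$-dimensional family over $K$, and applying Lemma \ref{lem:dio.rational} with the (coprime) rational function $(X_1,X_2)\mapsto X_1+X_2$ (whose denominator is $1$) yields a $1$-dimensional family whose value on each $F/K$ is exactly $T_A(F)+T_B(F)$.

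Next, I would assemble a $3$-dimensional diophantine family by two applications of Lemma \ref{lem:dio.2}, namely
\[
\mathcal{E} := (T_A+T_B)\times(T_A+T_B)\times \gamma_{\mfp,t_\mfp}^\tau,
\]
using Remark \ref{rmk:gamma.diophantine} for the third factor; so $\mathcal{E}(F)$ is the set of triples $(x,y,w)\in F^3$ with $x,y\in T_A(F)+T_B(F)$ and $w\in\gamma_{\mfp,t_\mfp}^\tau(F)$.

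Finally, since $t_\mfp\in K$, the single rational function
\[
f(X_1,X_2,W) := \frac{X_1}{1+t_\mfp W^{e+1}X_2}\in K(X_1,X_2,W)
\]
has numerator and denominator that are coprime in $K[X_1,X_2,W]$: the denominator $1+t_\mfp W^{e+1}X_2$ has nonzero constant term as a polynomial in $X_1$, hence is not divisible by the irreducible $X_1$. I would then apply Lemma \ref{lem:dio.rational} to $\mathcal{E}$ with the $1$-tuple $f$; the result is a $1$-dimensional diophantine family over $K$ whose value on $F$ coincides exactly with the set defining $D_{\mfp,t_\mfp,A,B}^{\tau}(F)$. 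The only step requiring even mild care is checking coprimality to satisfy the hypothesis of Lemma \ref{lem:dio.rational}; everything else is a direct citation of the closure lemmas.
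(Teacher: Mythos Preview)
Your proof is correct and follows essentially the same approach as the paper's: the paper applies Lemma \ref{lem:dio.rational} directly to the $5$-dimensional family $T_A\times T_B\times T_A\times T_B\times\gamma_{\mfp,t_\mfp}^\tau$ with the rational function $(X_1+X_2)(1+t_\mfp X_5^{e+1}(X_3+X_4))^{-1}$, whereas you first condense $T_A\times T_B$ to the $1$-dimensional family $T_A+T_B$ and then work with a $3$-dimensional product. This is a purely cosmetic reorganisation; your two-step version even makes the coprimality check in Lemma \ref{lem:dio.rational} marginally more transparent.
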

\begin{proof}
We have seen in Lemma \ref{lem:S.T.dio} that $T_{A}$ and $T_{B}$ are $1$-dimensional diophantine families over $K$.
The claim follows by applying 
Lemma \ref{lem:dio.rational} to
the 5-dimensional diophantine family $T_A\times T_B\times T_A\times T_B\times\gamma_{\mfp,t_\mfp}^\tau$ over $K$ (Lemma \ref{lem:dio.2})
and the rational function $(X_1+X_2)(1+t_\mfp X_5^{e+1}(X_3+X_4))^{-1}$.
\end{proof}

\begin{proposition}\label{prp:D.property.2}
If $A,B$ are $K$-algebras as in Proposition \ref{prp:AB.1}, then
\begin{align*}
D_{\mfp,t_\mfp,A,B}^{\tau}(E)&\subseteq \mathcal{O}_E
\end{align*}
for every finite extension $E/K_\mfp$
of relative type at most $\tau$.
\end{proposition}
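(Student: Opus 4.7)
The plan is to show that every element of $D_{\mfp,t_\mfp,A,B}^{\tau}(E)$ lies in $\mathcal{O}_E$ by computing $\mfP$-valuations, where $\mfP$ is the unique prime of the local field $E$. The first step is to apply Proposition \ref{prp:AB.2}(i), which gives $T_A(E) + T_B(E) = \mathcal{O}_E$. This places both the numerator $x$ and the ingredient $y$ in $\mathcal{O}_E$, so $v_\mfP(x), v_\mfP(y) \geq 0$.

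The heart of the argument is to show that $v_\mfP(t_\mfp w^{e+1}) > 0$ for every $w = \gamma_{\mfp,t_\mfp}^{\tau}(z)$ with $z \in E$ not a pole; granting this and combining with $v_\mfP(y) \geq 0$, we obtain $t_\mfp w^{e+1} y \in \mfP$, whence $v_\mfP(1 + t_\mfp w^{e+1} y) = 0$, and therefore $v_\mfP\bigl(\tfrac{x}{1+t_\mfp w^{e+1} y}\bigr) = v_\mfP(x) \geq 0$, as required.

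To establish $v_\mfP(t_\mfp w^{e+1}) > 0$, I would go through the four cases of Lemma \ref{lem:gamma.1}. The main trick is to eliminate case (iv): since $E/K_\mfp$ has relative type at most $\tau$, the residue degree $f' := f(\mfP|\mfp)$ divides $f$, so $\bar E_\mfP \cong \mathbb{F}_{q^{f'}} \subseteq \mathbb{F}_{q^f}$. Thus every residue $\bar z$ satisfies $\bar z^{q^f} = \bar z$, meaning that whenever $v_\mfP(z) = 0$ we automatically have $v_\mfP(z^{q^f} - z) > 0$, placing us in case (iii) rather than case (iv). Writing $e' := e(\mfP|\mfp) \leq e$, so that $v_\mfP(t_\mfp) = e'$, a direct computation in each of the remaining cases (using Lemma \ref{lem:gamma.1}) yields
\[
v_\mfP(t_\mfp w^{e+1}) = e' + (e+1)v_\mfP(w) \geq e(e+1-e') \geq e > 0,
\]
where the exponent $e+1$ is exactly what is needed to outweigh the $-e'$ appearing in $v_\mfP(w)$.

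The main obstacle is the case analysis itself, and specifically the observation that case (iv) of Lemma \ref{lem:gamma.1} never arises over $E$; this is where the hypothesis that $\mfP$ has relative type \emph{at most} $\tau = (e,f)$ gets used in an essential way, both through $f' \mid f$ (to exclude case (iv)) and through $e' \leq e$ (to make the inequalities in cases (i)--(iii) go through).
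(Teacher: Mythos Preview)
Your argument is correct and follows the same structure as the paper's: use Proposition~\ref{prp:AB.2}(i) to get $T_A(E)+T_B(E)=\mathcal{O}_E$, then check that the denominator is a $v_\mfP$-unit. The paper's version is more concise because it simply invokes $\gamma_{\mfp,t_\mfp}^\tau(E)\subseteq\mathcal{O}_E$ (a standard consequence of the Kochen-operator framework, cf.\ the discussion before Example~2.2) together with $1+t_\mfp\mathcal{O}_E\subseteq\mathcal{O}_E^\times$; your case analysis via Lemma~\ref{lem:gamma.1} is effectively a re-derivation of $v_\mfP(w)\geq 0$. One small correction: your closing remark that the exponent $e+1$ is ``exactly what is needed'' here is not right---since $v_\mfP(w)\geq 0$, any positive exponent would do in this proposition; the exponent $e{+}1$ is tuned instead for the companion Proposition~\ref{prp:D.property.1}, where Lemma~\ref{lem:gamma} is used at primes \emph{not} of type $\leq\tau$.
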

\begin{proof}
By Proposition \ref{prp:AB.2}(i), we have 
$T_{A}(E)+T_{B}(E)=\mathcal{O}_E$.
Since also
$\gamma_{\mfp,t_\mfp}^\tau(E)\subseteq\mathcal{O}_E$
and
$1+t_{\mfp}\mathcal{O}_E\subseteq\mathcal{O}_E^\times$, we have $D_{\mfp,t_\mfp,A,B}^{\tau}(E)\subseteq\mathcal{O}_E$, as required.
\end{proof}

\begin{proposition}\label{prp:D.property.1}
If $A,B$ are $K$-algebras as in Proposition \ref{prp:AB.1},
then
$$
    D_{\mfp,t_\mfp,A,B}^{\tau}(L)=R_{\mathfrak{p}}^{\tau}(L)
$$
for every number field $L$ containing $K$.
\end{proposition}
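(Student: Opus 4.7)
The plan is to prove both inclusions. For the easier direction $D_{\mfp, t_\mfp, A, B}^{\tau}(L) \subseteq R_{\mfp}^{\tau}(L)$, I would combine Proposition \ref{prp:D.property.2} with the fact that diophantine families respect field extensions. For any $\mfP \in \mathcal{S}_{\mfp}^{\tau}(L)$, the completion $L_{\mfP}$ is a finite extension of $K_{\mfp}$ of relative type at most $\tau$, so $D_{\mfp, t_\mfp, A, B}^{\tau}(L) \subseteq D_{\mfp, t_\mfp, A, B}^{\tau}(L_{\mfP}) \subseteq \mathcal{O}_{L_{\mfP}}$; intersecting with $L$ yields $\mathcal{O}_{\mfP}$, and intersecting over all such $\mfP$ gives $R_{\mfp}^{\tau}(L)$.

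The harder inclusion $R_{\mfp}^{\tau}(L) \subseteq D_{\mfp, t_\mfp, A, B}^{\tau}(L)$ requires, given $r \in R_{\mfp}^{\tau}(L)$, to explicitly exhibit $w \in \gamma_{\mfp, t_\mfp}^{\tau}(L)$ and $x, y \in T_{A}(L) + T_{B}(L)$ with $r = x/(1 + t_{\mfp} w^{e+1} y)$. Let $\mathcal{B} := \mathcal{S}_{\mfp}^{*}(L) \setminus \mathcal{S}_{\mfp}^{\tau}(L)$ be the finite set of bad primes above $\mfp$. My approach is to engineer $w$ and $y$ so that the denominator $1 + t_{\mfp} w^{e+1} y$ is a unit at good primes while absorbing the poles of $r$ at bad primes. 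For each bad prime $\mfP$, Lemma \ref{lem:gamma} furnishes an input $\xi_\mfP \in L$ with $v_{\mfP}(\gamma_{\mfp, t_\mfp}^{\tau}(\xi_{\mfP})) \leq -v_{\mfP}(t_{\mfp})/(e+1) < 0$. By Lemma \ref{lem:gamma.3} combined with weak approximation, I would produce a single $\xi \in L$ simultaneously close to each $\xi_{\mfP}$ (and with $v_\mfP(\xi)$ chosen large at those bad primes $\mfP$ where $r$ is already integral, to force $\gamma(\xi)$ to be integral there as well), and set $w := \gamma_{\mfp, t_\mfp}^{\tau}(\xi)$.

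For each very bad prime $\mfP \in \mathcal{B}$ with $v_{\mfP}(r) = -N_{\mfP} < 0$, the power $w^{e+1}$ makes $v_{\mfP}(t_{\mfp} w^{e+1}) \leq 0$; I would choose $y \in \mathcal{O}_L$ so that $v_{\mfP}(y) = -v_{\mfP}(t_{\mfp} w^{e+1})$ and the residue of the resulting unit $t_{\mfp} w^{e+1} y$ is congruent to $-1$ modulo $\mfP^{N_{\mfP}}$. Such $y$ exists by weak approximation in $\mathcal{O}_L$, since $\mathcal{O}_L$ surjects onto each residue ring. Setting $x := r(1 + t_{\mfp} w^{e+1} y)$, I would verify $v_{\mfP}(x) \geq 0$ at every $\mfP \in \mathcal{S}_{\mfp}^{*}(L)$: at good primes the denominator is a $1$-unit and $r$ is integral; at very bad primes the denominator's valuation $\geq N_{\mfP}$ compensates $v_{\mfP}(r) = -N_{\mfP}$; at bad primes with $v_{\mfP}(r) \geq 0$ the denominator is again a $1$-unit by the choice of $\xi$ above. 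Hence $x \in \bigcap_{\mfP \in \mathcal{S}_{\mfp}^{*}(L)} \mathcal{O}_{\mfP} \subseteq T_{A}(L) + T_{B}(L)$ by Proposition \ref{prp:AB.2}(ii), while $y \in \mathcal{O}_L$ lies in the same set, concluding $r \in D_{\mfp, t_\mfp, A, B}^{\tau}(L)$.

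The main obstacle is the simultaneous choreography of valuation and residue conditions at the very bad primes. The crucial structural fact making this work is the $(e+1)$-th power in the denominator, which precisely matches the bound in Lemma \ref{lem:gamma} and ensures that $v_{\mfP}(t_{\mfp} w^{e+1})$ can be made non-positive while $v_{\mfP}(y)$ can be kept non-negative; without this matching, one could not balance the valuations using an integral $y$. Once the valuations are aligned, the residue congruence $t_{\mfp} w^{e+1} y \equiv -1 \pmod{\mfP^{N_{\mfP}}}$ is a single weak approximation problem on $y$, solvable for arbitrarily large $N_\mfP$.
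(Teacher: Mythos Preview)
Your proof follows the paper's closely: both inclusions are handled the same way (the easy one via completions and Proposition~\ref{prp:D.property.2}; the hard one by producing $w=\gamma_{\mfp,t_\mfp}^\tau(\xi)$ from Lemma~\ref{lem:gamma} plus weak approximation, then $y$ by a second approximation, then checking $x=r(1+t_\mfp w^{e+1}y)\in\bigcap_{\mfP\in\mathcal{S}_\mfp^*(L)}\mathcal{O}_\mfP$ so that Proposition~\ref{prp:AB.2}(ii) applies). The paper treats all primes in $\mathcal{S}_\mfp^*(L)\setminus\mathcal{S}_\mfp^\tau(L)$ uniformly rather than splitting off those where $r$ is already integral, but your split is harmless.

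There is one genuine technical slip. When you pass from the local $\xi_\mfP$ to a single $\xi$ by invoking Lemma~\ref{lem:gamma.3}, that lemma \emph{as stated} only gives $v_\mfP(\gamma_{\mfp,t_\mfp}^\tau(\xi))<0$; it does not give the quantitative bound $v_\mfP(\gamma_{\mfp,t_\mfp}^\tau(\xi))\leq -\tfrac{1}{e+1}v_\mfP(t_\mfp)$ that you then use to conclude $v_\mfP(t_\mfp w^{e+1})\leq 0$. Since $\mfP$ is a prime \emph{not} of relative type at most $\tau$, the ramification $v_\mfP(t_\mfp)$ can be arbitrarily large, so $v_\mfP(w)\leq -1$ alone does not force $v_\mfP(t_\mfp w^{e+1})\leq 0$. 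The paper avoids this by appealing directly to continuity of the rational function $\gamma_{\mfp,t_\mfp}^\tau$ under weak approximation (so one may arrange $v_\mfP(\gamma_{\mfp,t_\mfp}^\tau(\xi))=v_\mfP(\gamma_{\mfp,t_\mfp}^\tau(\xi_\mfP))$). Alternatively, the \emph{proof} of Lemma~\ref{lem:gamma.3} actually shows that either $v_\mfP(\gamma(y))=v_\mfP(\gamma(x))$ or $v_\mfP(\gamma(y))\leq -v_\mfP(t_\mfp)$, and either of these is enough for your purposes. With this adjustment your argument is complete and equivalent to the paper's.
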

\begin{proof}
By Proposition \ref{prp:D.property.2}, $D_{\mfp,t_\mfp,A,B}^{\tau}(L_\mfP)\subseteq\mathcal{O}_{L_\mfP}$
for every $\mfP\in\mathcal{S}_\mfp^\tau(L)$, hence
$$
    D_{\mfp,t_\mfp,A,B}^\tau(L)\subseteq\bigcap_{\mfP\in\mathcal{S}_\mfp^\tau(L)}\mathcal{O}_{L_\mfP}\cap L=\bigcap_{\mfP\in\mathcal{S}_\mfp^\tau(L)}\mathcal{O}_\mfP=R_\mfp^\tau(L).
$$

To show the other inclusion,  let $r\in R_{\mathfrak{p}}^{\tau}(L)$.
Since $L/K$ is finite, 
the set $\mathcal{S}_\mfp^*(L)$
of primes of $L$ over $\mathfrak{p}$ is finite.
Write 
$\mfP_{1},...,\mfP_{k}\in\mathcal{S}_{\mathfrak{p}}^{\tau}(L)$
for the primes over $\mfp$ of relative type $\leq\tau$,
and
$\mfQ_1,\dots,\mfQ_l$
for the primes over $\mfp$ not of relative type $\leq\tau$.
For each $i\in\{1,...,l\}$,
by Lemma \ref{lem:gamma} there exists $z_{i}$ such that
\begin{align*}
v_{\mfQ_i}(\gamma_{\mfp, t_\mfp}^{\tau}(z_{i}))&\leq-\frac{1}{e+1}v_{\mfQ_i}(t_{\mathfrak{p}}),
\end{align*}
i.e.
$v_{\mfQ_i}((t_{\mathfrak{p}}\gamma_{\mfp, t_\mfp}^{\tau}(z_i)^{e+1})^{-1})\geq0$.
By weak approximation and continuity of rational functions, 
there exists $z\in L$ such that
$v_{\mfQ_i}((t_{\mathfrak{p}}\gamma_{\mfp, t_\mfp}^{\tau}(z)^{e+1})^{-1})\geq0$
for each $i\in\{1,...,l\}$.
By another application of weak approximation
there exists
$y\in L$
such that\\

\begin{tabular}{rrll}
$v_{\mfQ_i}\big((t_{\mathfrak{p}}\gamma_{\mfp, t_\mfp}^{\tau}(z)^{e+1})^{-1}+y\big)$&$\geq$&$\max\{0,-v_{\mfQ_i}(rt_{\mfp}\gamma_{\mfp, t_\mfp}^{\tau}(z)^{e+1})\}$,&\quad $i=1,\dots,l$,\\
$v_{\mfP_i}(y)$&$\geq$&$0$,&\quad $i=1,\dots,k$.\\
\end{tabular}

\vspace{0.2cm}
\noindent
In particular, $y\in\bigcap_{\mfP\in\mathcal{S}_{\mfp}^{*}(L)}\mathcal{O}_{\mfP}$ 
and $x:=r(1+t_{\mathfrak{p}}\gamma_{\mfp, t_\mfp}^{\tau}(z)^{e+1}y)$
satisfies
$v_{\mfQ_i}(x)\geq0$
for each $i\in\{1,...,l\}$.
As $\mfP_i\in\mathcal{S}_\mfp^\tau(L)$, we have
$r,t_\mfp,\gamma_{\mfp,t_\mfp}^\tau(z),y\in\mathcal{O}_{\mfP_i}$,
hence
$v_{\mfP_i}(x)\geq 0$
for all $i\in\{1,...,k\}$.
Thus 
$x\in\bigcap_{\mfP\in\mathcal{S}_{\mfp}^{*}(L)}\mathcal{O}_{\mfP}$.
As 
$$
 \bigcap_{\mfP\in\mathcal{S}_{\mfp}^{*}(L)}\mathcal{O}_{\mfP}\subseteq T_A(L)+T_B(L)
$$ 
by
Proposition \ref{prp:AB.2}(ii),
we get that 
$$
 r=x(1+t_\mfp\gamma_{\mfp,t_\mfp}^\tau(z)^{e+1}y)^{-1}\in D_{\mfp,t_\mfp,A,B}^{\tau}(L),
$$ 
as required.
\end{proof}

\begin{theorem}\label{thm:main.2}
For every finite place $\mfp$ of a number field $K$
and every $\tau\in\mathbb{N}^2$,
there exists $N\in\mathbb{N}$ such that
$\pi_{\mfp}^{\tau}(L)\leq N$
for every number field $L$ containing $K$.
\end{theorem}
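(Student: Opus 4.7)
The plan is simply to assemble the machinery built up in the preceding results: Propositions \ref{prp:AB.1}, \ref{prp:AB.2}, \ref{prp:D.property.1}, and \ref{prp:D.property.2} have been designed precisely so that the diophantine family $D_{\mfp,t_\mfp,A,B}^\tau$ satisfies the hypotheses of the compactness-style Proposition \ref{prop:compactness_bounded_pi}.

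Concretely, I would first fix a uniformizer $t_\mfp \in \mathcal{O}_K$ of $\mfp$ and choose any prime $\ell > ef$, where $\tau = (e,f)$. Applying Proposition \ref{prp:AB.1} to this $\ell$, I obtain central simple algebras $A$ and $B$ of degree $\ell$ over $K$ with the prescribed splitting behaviour at $\mfp$, at other finite places, and at infinite places. I then take $D := D_{\mfp,t_\mfp,A,B}^\tau$, which is a $1$-dimensional diophantine family over $K$ by Lemma \ref{lem:D.dio}.

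Next I would verify the two hypotheses of Proposition \ref{prop:compactness_bounded_pi} with $\mathcal{K}$ being the class of all number fields $L/K$. Hypothesis (i), namely that $D(L) = R_\mfp^\tau(L)$ for every $L \in \mathcal{K}$, is exactly the content of Proposition \ref{prp:D.property.1}. Hypothesis (ii), that $D(E) \subseteq \mathcal{O}_E$ for every finite extension $E/K_\mfp$ of relative type at most $\tau$, is Proposition \ref{prp:D.property.2}. (Both invocations require $\ell > ef$, which we arranged from the start so that the degree condition in Proposition \ref{prp:AB.2} is met.)

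Proposition \ref{prop:compactness_bounded_pi} then yields an $N \in \mathbb{N}$ with $\pi_\mfp^\tau(L) \leq N$ for all $L \in \mathcal{K}$, which is the statement of the theorem. There is no genuine obstacle remaining at this stage: the substantive work has already been done in constructing the algebras $A, B$ with the right Hasse invariants and in proving that $D_{\mfp,t_\mfp,A,B}^\tau$ recovers $R_\mfp^\tau$ on number fields while staying inside $\mathcal{O}_E$ on local fields of bounded relative type. The only minor thing to check is that $\mathcal{K}$, the class of number field extensions of $K$, is indeed a class of extensions of $K$ in the sense required by Proposition \ref{prop:compactness_bounded_pi}, which is immediate.
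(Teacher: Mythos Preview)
Your proposal is correct and follows essentially the same route as the paper's own proof: choose $A,B$ via Proposition~\ref{prp:AB.1}, set $D=D_{\mfp,t_\mfp,A,B}^\tau$, and feed Propositions~\ref{prp:D.property.1} and~\ref{prp:D.property.2} into Proposition~\ref{prop:compactness_bounded_pi} for the class of number field extensions of $K$. Your explicit insistence on $\ell>ef$ is a small clarification the paper leaves implicit, but otherwise the arguments coincide.
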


\begin{proof}
We choose algebras $A$ and $B$ over $K$ according to Proposition \ref{prp:AB.1},
and we apply Proposition \ref{prop:compactness_bounded_pi} to the class $\mathcal{K}$ of finite extensions $L/K$ and the diophantine family $D=D_{\mfp,t_\mfp,A,B}^\tau$,
where the two assumptions of Proposition \ref{prop:compactness_bounded_pi}
are verified in
Proposition~\ref{prp:D.property.1} and
Proposition~\ref{prp:D.property.2}, respectively.
\end{proof}

\section{The \texorpdfstring{$(\mfp,\tau)$}{(p,τ)}-Pythagoras number
in finite extensions}
The growth of the (usual) pythagoras number 
is bounded in finite extensions $E/F$ by
\begin{align*}
\pi(E)&\leq [E:F]\cdot\pi(F),
\end{align*}
see \cite[Ch.\ 7 Prop.\ 1.13]{Pfi95}.
We now combine ideas from the proof of Theorem~\ref{thm:main.2}
with techniques for $p$-valuations on general fields
to prove an (inexplicit) analogue of this for the
$(\mfp,\tau)$-Pythagoras number.

As before fix $K$, $\mfp$ and $\tau=(e,f)$ and let $F/K$ be an extension.
We equip $\mcS_{\mfp}^{\tau}(F)$ with the {\em constructible topology},
which by definition has
a basis consisting of the sets 
$$
    \mathcal{S}_\mfp^\tau(F;a) := \{\mfP\in\mathcal{S}_\mfp^\tau(F)\;|\;v_\mfP(a)\geq0\},\quad a\in F
$$
and their complements.
In \cite{ADF1}, we studied approximation theorems for spaces of localities, i.e.~valuations, orderings, and absolute values, on a given field.
We now deduce an approximation theorem in the setting of the space $\mcS_{\mfp}^{\tau}(F)$.

\begin{theorem}\label{thm:approximation}
Let $S_{1},\dots,S_{n}\subseteq\mathcal{S}_{\mfp}^{\tau}(F)$ be disjoint and closed,
let $x_{1},...,x_{n}\in F$,
and let $z_{1},...,z_{n}\in F^{\times}$.
Assume that, for any $\mfP_{i}\in S_{i}$ and $\mfP_{j}\in S_{j}$,
if the valuation $w$ is the finest common coarsening of $v_{\mfP_{i}}$ and $v_{\mfP_{j}}$,
then
$w(x_{i}-x_{j})\geq w(z_{i})=w(z_{j})$.
Then there exists $x\in F$ with
$$
	v_{\mfQ}(x-x_i) > v_{\mfQ}(z_i) \text{ for all }\mfQ \in S_i, \mbox{ for } i=1,\dots,n.
$$
\end{theorem}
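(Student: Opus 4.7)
The plan is to deduce this theorem directly from the abstract approximation theorem for spaces of localities proved in \cite{ADF1}. That theorem gives simultaneous approximation on finitely many disjoint closed subsets of the space $\mcS(F)$ of all primes of $F$, equipped with the constructible topology, under a Ribenboim-style compatibility condition on pairwise finest common coarsenings — precisely the hypothesis in the present statement.

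The key setup step is to recognize $\mcS_\mfp^\tau(F)$ as a closed subspace of $\mcS(F)$ in the constructible topology, so that each $S_i$ is closed in the ambient space as well and the abstract theorem becomes applicable. The defining conditions of $\mcS_\mfp^\tau(F)$ within $\mcS(F)$ — that $\mfP$ extends $\mfp$, that $e(\mfP|\mfp)\leq e$, and that $f(\mfP|\mfp)\mid f$ — can each be expressed as a Boolean combination of basic constructible sets of the form $\{\mfP : v_\mfP(a)\geq 0\}$ for suitable $a\in F$: the extension of $\mfp$ and the ramification bound follow from $v_\mfP(t_\mfp)$ lying in a prescribed finite range, while the residue-degree condition is captured via the Kochen operator, with Lemma \ref{lem:gamma} characterizing in terms of $v_\mfP(\gamma_{\mfp,t_\mfp}^\tau(x))$ exactly when the residue of $x$ has degree over $\mathbb{F}_q$ not dividing $f$.

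Granted this identification, the $S_i$ are disjoint closed subsets of $\mcS(F)$, the hypothesis on the finest common coarsenings matches the one required in \cite{ADF1} verbatim, and a direct application of that theorem produces the required $x\in F$. The main obstacle lies in the closedness verification: the ramification bound is straightforward, but the residue-degree condition is naturally universal in the elements of $\mathcal{O}_\mfP$ and must be packaged into a finite constructible description via the Kochen operator and Lemma \ref{lem:gamma}. Once this setup is in place, the substantive approximation content is handled entirely by the abstract theorem from \cite{ADF1}.
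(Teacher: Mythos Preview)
Your plan coincides with the paper's argument: reduce to the approximation theorem of \cite{ADF1} by exhibiting $\mcS_\mfp^\tau(F)$ as a closed subspace of an ambient space of localities, so that the $S_i$ remain closed there and the abstract result applies. Two execution details differ. The paper takes the ambient space to be ${\rm S}_{t_\mfp}^{e}(F)$ of \cite[Example~2.4]{ADF1} (equivalence classes of valuations with $\mathbb{Z}$ convex in the value group and $0<v(t_\mfp)\le e$), which is exactly the space for which \cite[Corollary~5.5]{ADF1} is stated; and for closedness it simply quotes \cite[Lemma~6.2]{PR84}, which says $\mcS_\mfp^\tau(F)$ is cut out inside ${\rm S}_{t_\mfp}^{e}(F)$ by the conditions $v(a)\ge 0$ for $a\in\mathcal{O}_\mfp\cup\gamma_{\mfp,t_\mfp}^\tau(F)$, with no separate case analysis.

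One point in your outline needs care: the condition ``$v_\mfP(t_\mfp)$ lies in a prescribed finite range'' is not, on its own, a Boolean combination of basic sets $\{v(a)\ge 0\}$, since such conditions only compare values of elements of $F$ and give no direct handle on the integer $v_\mfP(t_\mfp)$. The paper sidesteps this by building the bound $v(t_\mfp)\le e$ into the ambient space. Alternatively, your own toolkit already covers it: Lemma~\ref{lem:gamma}(i) shows that whenever $v_\mfP(t_\mfp)>e$ there is an $x$ (any local uniformizer for $\mfP$) with $v_\mfP(\gamma_{\mfp,t_\mfp}^\tau(x))<0$, so the ramification bound, like the residue-degree bound, is encoded by the single closed condition $\gamma_{\mfp,t_\mfp}^\tau(F)\subseteq\mathcal{O}_\mfP$ --- which is precisely the content of \cite[Lemma~6.2]{PR84}. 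With that adjustment your argument goes through and agrees with the paper's.
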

\begin{proof}
Corollary 5.5 of
\cite{ADF1} is a similar statement in which $\mcS_{\mfp}^{\tau}(F)$ is replaced by a space ${\rm S}_{\pi}^{e}(F)$,
for $\pi\in F^{\times}$ and $e\in\mathbb{N}$,
By definition (see \cite[Example 2.4]{ADF1}),
${\rm S}_{\pi}^{e}(F)$ is the space
of equivalence classes of valuations $v$ on $F$ with value group $\Gamma_{v}$, which has $\mathbb{Z}$ as a convex subgroup and $0<v(\pi)\leq e$.
We note that $\mathcal{S}_\mfp^\tau(F)\subseteq{\rm S}_{t_\mfp}^e(F)$,
and if we equip ${\rm S}_{t_\mfp}^e(F)$ with its own constructible topology (see \cite[Section 2]{ADF1}) then 
$\mathcal{S}_\mfp^\tau(F)$ is a closed subspace:
By \cite[Lemma 6.2]{PR84}, $\mathcal{S}_\mfp^\tau(F)$
is the intersection over all sets
$\{v\in{\rm S}_{t_\mfp}^{e}(F):v(a)\geq0\}$ for 
$a\in\mathcal{O}_\mfp\cup\gamma_{\mfp,t_\mfp}^\tau(F)$.
Therefore, each $S_{i}$ 
is also a closed subset of $\mathrm{S}_{t_\mfp}^{e}(F)$
and so we may obtain the required element $x\in F$ by an application of \cite[Corollary 5.5]{ADF1}.
\end{proof}

\begin{lemma}\label{lem:omega}
Let $\tau\leq\tau'\in\mathbb{N}^2$.
There is a rational function
$\omega_{\tau,\tau'}\in\mathbb{Q}(t_{\mathfrak{p}})(X)$
such that 
$v_{\mfP}(\omega_{\tau,\tau'}(x))>0$
for all $x\in F$ and $\mfP\in\mathcal{S}_\mfp^{\tau'}(F)$,
and moreover
$v_{\mfP}(\omega_{\tau,\tau'}(x))=1$
if $v_\mfP(x)=1$ and $\mfP$ is of exact relative type $\tau$ over $\mfp$.
\end{lemma}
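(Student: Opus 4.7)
My plan is to take the explicit rational function
\[
\omega_{\tau,\tau'}(X) := \frac{t_\mfp X}{t_\mfp + X^N},
\]
where $N = e'+1$, and verify both clauses directly by a case analysis on $v_\mfP(x)$ at an arbitrary $\mfP \in \mcS_\mfp^{\tau'}(F)$ of exact relative type $(e_\mfP, f_\mfP)$, so that $1 \le e_\mfP \le e'$.

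The essential preliminary step is a non-cancellation observation for the denominator: I would show that $Nv_\mfP(x) \neq e_\mfP$ for every $x \in F^\times$. Indeed, an equality $Nv_\mfP(x) = e_\mfP$ would force $v_\mfP(x) = e_\mfP/N$ to lie strictly between $0$ and $1$ in the value group of $v_\mfP$; but since $\mathbb{Z}$ is the smallest non-trivial convex subgroup, the only elements of that group in the closed interval $[0,1]$ are $0$ and $1$, so no such value exists. The ultrametric triangle inequality then upgrades to the equality $v_\mfP(t_\mfp + x^N) = \min(e_\mfP, Nv_\mfP(x))$, and in particular the denominator is nonzero.

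From here the three remaining cases are routine. If $v_\mfP(x) > 0$ (whether an integer or an ``infinite'' positive element of the value group), then $Nv_\mfP(x) > e_\mfP$, so the denominator has valuation $e_\mfP$ and $v_\mfP(\omega_{\tau,\tau'}(x)) = v_\mfP(x) > 0$; specialising to $v_\mfP(x) = 1$ and $(e_\mfP,f_\mfP) = \tau$ yields the ``moreover'' clause. If $v_\mfP(x) = 0$, then the residue of the denominator is $\bar{x}^N \neq 0$, so the denominator has valuation $0$ and $v_\mfP(\omega_{\tau,\tau'}(x)) = e_\mfP > 0$. Finally, if $v_\mfP(x) < 0$, then $Nv_\mfP(x) < 0 < e_\mfP$, the denominator has valuation $Nv_\mfP(x)$, and $v_\mfP(\omega_{\tau,\tau'}(x)) = e_\mfP + (1-N)v_\mfP(x) > e_\mfP > 0$.

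The main conceptual obstacle is really only the non-cancellation argument above: once the value group is permitted to properly contain $\mathbb{Z}$, one must exclude the possibility $v_\mfP(x) = e_\mfP/N$, and this is precisely where the choice $N > e'$ (which forces $0 < e_\mfP/N < 1$) is exploited. Beyond that, the construction is purely formal and in fact does not use $f$ or $f'$ at all; the ``moreover'' clause is satisfied in the slightly stronger form that $v_\mfP(\omega_{\tau,\tau'}(x)) = 1$ at every $\mfP \in \mcS_\mfp^{\tau'}(F)$ with $v_\mfP(x) = 1$, which is compatible with the stated condition.
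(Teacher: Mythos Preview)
Your proof is correct and takes a genuinely different route from the paper. The paper's argument invokes Dirichlet's theorem on primes in arithmetic progressions to find a prime $\ell>e'$ with $\ell\equiv 1\pmod{e}$, sets $\beta(X)=t_\mfp^{-k}X^{\ell}$ with $k=(\ell-1)/e$, and takes $\omega_{\tau,\tau'}=(\beta+\beta^{-1})^{-1}$; the primality of $\ell$ together with $\ell>k$ and $\ell>e'$ is what rules out $v_\mfP(\beta(x))=0$. Your construction $\omega_{\tau,\tau'}(X)=t_\mfp X/(t_\mfp+X^{e'+1})$ is more elementary: it dispenses with Dirichlet entirely, depends only on $e'$ and not on $\tau$, and in the range $v_\mfP(x)>0$ yields the clean identity $v_\mfP(\omega_{\tau,\tau'}(x))=v_\mfP(x)$, from which the ``moreover'' clause is immediate for every $\mfP\in\mcS_\mfp^{\tau'}(F)$ rather than only those of exact relative type $\tau$. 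The only place where your write-up is slightly informal is the phrase ``$v_\mfP(x)=e_\mfP/N$'': the rational number $e_\mfP/N$ need not lie in the value group, but your actual argument---that $0<Nv_\mfP(x)=e_\mfP<N$ forces $0<v_\mfP(x)<1$, which convexity of $\mathbb{Z}$ forbids---is exactly right.
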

\begin{proof}
Write $\tau'=(e',f')$.
By Dirichlet's theorem on primes in arithmetic progressions
there exists $k\in\mathbb{N}$ such that $\ell:=1+ke$
is a prime number and $\ell>e'$. Let $\beta(X)=t_\mfp^{-k}X^\ell$.
For every $\mfP\in\mathcal{S}_\mfp^{\tau'}(F)$ and $x\in F$ we have
$v_\mfP(\beta(x))=\ell v_\mfP(x)-kv_\mfP(t_\mfp)$,
which is non-zero (since $\ell>k$ and $\ell>e'\geq v_\mfP(t_\mfp)$
imply $\ell\nmid kv_\mfP(t_\mfp)$),
and equals $1$ if $v_\mfP(x)=1$ and $v_\mfP(t_\mfp)=e$.
Thus $\omega_{\tau,\tau'}(X)=(\beta(X)+\beta(X)^{-1})^{-1}$ satisfies the claim.
\end{proof}

\begin{lemma}\label{lem:rho}
There is a rational function $\rho_{\tau}\in\mathbb{Q}(X)$ such that
for all $\mfP\in\mcS_{\mfp}^{\tau}(F)$
and all $x\in F$
we have
$$
v_{\mfP}(\rho_{\tau}(x))\;
\left\{
\begin{array}{ll}
=0,&\mbox{if }v_{\mfP}(x)=0,\\
>0,&\mbox{if }v_{\mfP}(x)\neq0,
\end{array}\right.
$$
and if $v_{\mfP}(x)=0$
then
$\res_{\mfP}(\rho_{\tau}(x))=\res_{\mfP}(x)$.
\end{lemma}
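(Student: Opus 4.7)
The plan is to exhibit $\rho_\tau$ explicitly and then verify the three cases directly. The natural candidate is
\[
 \rho_\tau(X) \;=\; \frac{X}{1 + X^{q^f} - X}.
\]
The key observation driving this choice is that for $\mfP\in\mcS_\mfp^\tau(F)$, the residue field $\bar{F}_{v_\mfP}$ has degree over $\bar{K}_{v_\mfp}$ dividing $f$, so $\bar{F}_{v_\mfP}\subseteq\mathbb{F}_{q^f}$; consequently $a^{q^f}=a$ for every $a\in\bar{F}_{v_\mfP}$. This is exactly what is needed to make the denominator behave like a $1$-unit on residues while still growing fast enough at infinity.

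I would then verify the three cases. First, if $v_\mfP(x)=0$, then using $\res_\mfP(x)^{q^f}=\res_\mfP(x)$ one computes $\res_\mfP(1+x^{q^f}-x)=1$, so the denominator is a unit; hence $v_\mfP(\rho_\tau(x))=0$ and $\res_\mfP(\rho_\tau(x))=\res_\mfP(x)$. Second, if $v_\mfP(x)>0$, the denominator is a $1$-unit, so $v_\mfP(\rho_\tau(x))=v_\mfP(x)>0$. Third, if $v_\mfP(x)<0$, the three terms of the denominator have valuations $0$, $q^fv_\mfP(x)$, $v_\mfP(x)$, the middle one being strictly smallest; by the ultrametric inequality $v_\mfP(1+x^{q^f}-x)=q^fv_\mfP(x)$, and therefore
\[
  v_\mfP(\rho_\tau(x)) \;=\; v_\mfP(x)-q^fv_\mfP(x) \;=\; (1-q^f)v_\mfP(x) \;>\;0,
\]
since $q\ge 2$.

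There is no real obstacle: the only thing to check carefully is that the choice of exponent $q^f$ both kills the residue (thanks to $\bar{F}_{v_\mfP}\subseteq\mathbb{F}_{q^f}$) and dominates in the large-$x$ regime. The hypothesis $\tau=(e,f)$ on the relative type is used solely through the divisibility $f(\mfP|\mfp)\mid f$; the ramification component $e$ plays no role here. Note also that $\rho_\tau\in\mathbb{Q}(X)$ as required, and (in contrast to Lemma \ref{lem:omega}) no dependence on $t_\mfp$ is needed.
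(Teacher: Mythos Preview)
Your proof is correct and essentially identical to the paper's: you use the same rational function $\rho_\tau(X)=X/(X^{q^f}-X+1)$ (your denominator $1+X^{q^f}-X$ is the same polynomial) and the same three-case verification. The only difference is cosmetic ordering of terms and your added remark that the ramification component $e$ plays no role.
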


\begin{proof}
Write $\rho_{\tau}(X)=X(X^{q^f}-X+1)^{-1}$.
Let $\mfP\in\mcS_{\mfp}^{\tau}(F)$
and let $x\in F$.
If $v_{\mfP}(x)<0$ then $v_{\mfP}(x^{q^{f}}-x+1)=q^{f}v_{\mfP}(x)<0$,
and so $v_{\mfP}(\rho_{\tau}(x))=(1-q^{f})v_{\mfP}(x)>0$.
On the other hand, if $v_{\mfP}(x)>0$ then $v_{\mfP}(x^{q^{f}}-x+1)=0$,
so $v_{\mfP}(\rho_{\tau}(x))=v_{\mfP}(x)>0$.
Finally, if $v_{\mfP}(x)=0$ then
\begin{align*}
    \res_{\mfP}(x^{q^{f}}-x+1)&=\res_{\mfP}(x)^{q^{f}}-\res_{\mfP}(x)+1
=1
\neq0,
\end{align*}
and in particular $v_{\mfP}(x^{q^{f}}-x+1)=0$.
Therefore $v_{\mfP}(\rho_{\tau}(x))=0$ and
$\res_{\mfP}(\rho_{\tau}(x))=\res_{\mfP}(x)$.
\end{proof}

\begin{proposition}\label{prp:finding.y}
Let $\tau\leq\tau'=(e',f')$
and
let $S_{0}$ denote an open-closed subset
of $\mcS_{\mfp}^{\tau'}(F)$
such that
$\mcS_{\mfp}^{\tau}(F)\subseteq S_{0}$.
There exists $y\in F$ such that
\begin{align*}
v_{\mfP}(\gamma_{\mfp, t_\mfp}^{\tau}(y))
\left\{
\begin{array}{ll}
\in[0,e'eq^{f}],&\mbox{if }\mfP\in S_{0},\\
<0,&\mbox{if }\mfP\in\mcS_{\mfp}^{\tau'}(F)\setminus S_{0}.
\end{array}\right.
\end{align*}
\end{proposition}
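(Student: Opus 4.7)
The plan is to construct the required $y$ via the approximation theorem (Theorem \ref{thm:approximation}), after partitioning $\mathcal{S}_\mfp^{\tau'}(F)$ into finitely many clopen pieces and specifying a suitable target on each.

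First, for each prime $\mfP$ in the closed set $S_1 := \mathcal{S}_\mfp^{\tau'}(F) \setminus S_0$, the relative type of $\mfP$ over $\mfp$ is $\leq \tau'$ but not $\leq \tau$, so either $e(\mfP|\mfp) > e$ or $f(\mfP|\mfp) \nmid f$; in either case Lemma \ref{lem:gamma} yields $z_\mfP \in \mathcal{O}_\mfP$ with $v_\mfP(\gamma_{\mfp, t_\mfp}^\tau(z_\mfP)) < 0$ (take $v_\mfP(z_\mfP)=1$ in the first case, and in the second a $v_\mfP$-unit whose residue generates $\mathbb{F}_{q^{f(\mfP|\mfp)}}$ over $\mathbb{F}_q$). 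The set
\[
W_\mfP := \{\mfQ \in \mathcal{S}_\mfp^{\tau'}(F) \;|\; v_\mfQ(z_\mfP) \geq 0 \text{ and } v_\mfQ(\gamma_{\mfp, t_\mfp}^\tau(z_\mfP)) < 0\}
\]
is clopen in the constructible topology and contains $\mfP$. By compactness of $\mathcal{S}_\mfp^{\tau'}(F)$, finitely many $W_{\mfP_1}, \dots, W_{\mfP_n}$ cover $S_1$, and a Boolean refinement produces a disjoint clopen partition $S_1 = T_1 \sqcup \cdots \sqcup T_n$ with $T_i \subseteq W_{\mfP_i}$; set $z_i := z_{\mfP_i}$.

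Next, I apply Theorem \ref{thm:approximation} to the disjoint closed sets $S_0, T_1, \dots, T_n$ with targets $x_0 := t_\mfp$ and $x_i := z_i$ (for $i\geq1$), and the uniform error element $t_\mfp$ throughout, producing $y \in F$ with $v_\mfQ(y - x_i) > v_\mfQ(t_\mfp)$ in the corresponding piece. On $S_0$ this yields $v_\mfP(y) = v_\mfP(t_\mfp) > 0$, so Lemma \ref{lem:gamma.1} gives $v_\mfP(\gamma_{\mfp, t_\mfp}^\tau(y)) = (e-1) v_\mfP(t_\mfp) \in [0, (e-1)e'] \subseteq [0, e'eq^f]$; on each $T_i$, the bound $v_\mfP(y - z_i) \geq v_\mfP(t_\mfp)$ combined with $v_\mfP(\gamma_{\mfp, t_\mfp}^\tau(z_i)) < 0$ yields $v_\mfP(\gamma_{\mfp, t_\mfp}^\tau(y)) < 0$ by Lemma \ref{lem:gamma.3}.

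The main technical step is verifying the compatibility hypothesis of Theorem \ref{thm:approximation}: for $\mfQ \in S_i$ and $\mfQ' \in S_j$ with $\mfQ \neq \mfQ'$, the finest common coarsening $w$ of $v_\mfQ$ and $v_{\mfQ'}$ must satisfy $w(x_i - x_j) \geq w(t_\mfp)$. The key observation is the isolation property of $p$-valuations: since $\mathbb{Z}$ is the minimal non-trivial convex subgroup of each value group, any proper coarsening corresponds to a convex subgroup $\Delta \supseteq \mathbb{Z} \ni v(t_\mfp)$ and hence sees $t_\mfp$ as a unit. Consequently, neither $v_\mfQ$ nor $v_{\mfQ'}$ can be a proper coarsening of the other---such a coarsening would force $v(t_\mfp)=0$, contradicting $v_\mfQ(t_\mfp), v_{\mfQ'}(t_\mfp) \geq 1$---so $w$ is a proper coarsening of each, giving $w(t_\mfp) = 0$. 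Moreover, the condition $v_\mfQ(z_\mfP) \geq 0$ built into $W_\mfP$ ensures each $z_i$ is integral at $v_\mfQ$ for every $\mfQ \in T_i$, so every target $x_i$ lies in $\mathcal{O}_w$, and the compatibility $w(x_i - x_j) \geq 0 = w(t_\mfp)$ then follows automatically.
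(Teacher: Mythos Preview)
Your argument is correct, and it follows the same overall architecture as the paper's proof---choose local witnesses on $S_1$, cover by clopen sets, refine to a partition, and apply Theorem~\ref{thm:approximation}---but it streamlines two of the paper's steps.

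First, the paper invokes the auxiliary rational functions $\omega_{\tau'',\tau'}$ and $\rho_{\tau'}$ (Lemmas~\ref{lem:omega} and~\ref{lem:rho}) precisely in order to force each local witness $y_\mfP$ to lie in $R_\mfp^{\tau'}(F)$, i.e.\ to be integral at \emph{every} prime of $\mathcal{S}_\mfp^{\tau'}(F)$. You bypass this globalisation by building the condition $v_\mfQ(z_\mfP)\geq 0$ into the clopen neighbourhood $W_\mfP$ itself: since the finest common coarsening $w$ of $v_\mfQ$ and $v_{\mfQ'}$ coarsens \emph{both} valuations, it suffices that $z_i\in\mathcal{O}_\mfQ$ for $\mfQ\in T_i$ and $z_j\in\mathcal{O}_{\mfQ'}$ for $\mfQ'\in T_j$, and these are guaranteed by the definition of the $W_{\mfP_k}$. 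This observation renders Lemmas~\ref{lem:omega} and~\ref{lem:rho} unnecessary for this proposition.

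Second, you take $x_0=t_\mfp$ rather than the paper's $x_0=t_\mfp^{-1}$, landing on $S_0$ in the case $v_\mfP(y)>0$ of Lemma~\ref{lem:gamma.1} instead of $v_\mfP(y)<0$; this gives $v_\mfP(\gamma_{\mfp,t_\mfp}^\tau(y))=(e-1)v_\mfP(t_\mfp)$, a sharper bound than the paper's $(eq^f-1)v_\mfP(t_\mfp)$, though either lies in $[0,e'eq^f]$.
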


\begin{proof}
For each $\mfP\in\mcS_{\mfp}^{\tau'}(F)\setminus S_{0}$,
we choose $y_{\mfP}\in F$ as follows.
First, if the relative type of $\mfP$ is exactly $\tau''=(e'',f'')$ with $e''>e$, 
then let $t_{\mfP}$ be a uniformizer of $v_{\mfP}$ and set
$y_{\mfP}=\omega_{\tau'',\tau'}(t_{\mfP})$.
By Lemma \ref{lem:omega}, $v_{\mfP}(y_{\mfP})=1$;
and
by Lemma \ref{lem:gamma},
$v_{\mfP}(\gamma_{\mfp, t_\mfp}^{\tau}(y_{\mfP}))<0$.
Also, for all $\mfQ\in\mathcal{S}_{\mfp}^{\tau'}(F)$ we have $v_{\mfQ}(y_{\mfP})>0$.
In particular, $y_{\mfP}\in R^{\tau'}_{\mfp}(F)$.

On the other hand, if the relative type of $\mfP$ is exactly $\tau''=(e'',f'')$ with $f''\nmid f$,
then let $a_{\mfP}$ with $v_{\mfP}(a_{\mfP})=0$ and $\res_\mfP(a_{\mfP})$ a generator of $Fv_{\mfP}$,
and set $y_{\mfP}=\rho_{\tau'}(a_{\mfP})$.
By Lemma \ref{lem:rho}, $v_{\mfP}(y_{\mfP})=0$ and $\res_\mfP(y_{\mfP})$ is a generator of $Fv_{\mfP}$.
By Lemma \ref{lem:gamma},
we have 
$v_{\mfP}(\gamma_{\mfp, t_\mfp}^{\tau}(y_{\mfP}))<0$.
Also, for all $\mfQ\in\mathcal{S}_{\mfp}^{\tau'}(F)$ we have $v_{\mfQ}(y_{\mfP})\geq0$,
i.e.~$y_{\mfP}\in R^{\tau'}_{\mfp}(F)$.

In either case, we have chosen $y_{\mfP}\in R_{\mfp}^{\tau'}(F)$ such that
$v_{\mfP}(\gamma_{\mfp, t_\mfp}^\tau(y_{\mfP}))<0$.
Next we make use of the compactness of $\mcS_{\mfp}^{\tau'}(F)$.
For $y\in F$, we let
$$
	S_{y}=\{\mfP\in\mcS_\mfp^{\tau'}(F)\;|\;v_{\mfP}(\gamma_{\mfp, t_\mfp}^\tau(y))<0\}.
$$
Each $S_{y}$ is an open-closed subset of $\mathcal{S}_{\mfp}^{\tau'}(F)$.
By our choice of the elements $y_{\mfP}$, the family
$$
	\left\{S_{y_{\mfP}}\setminus S_{0}\;:\;\mfP\in\mcS_{\mfp}^{\tau'}(F)\setminus S_{0}\right\}
$$
is an open covering of $\mcS_{\mfp}^{\tau'}(F)\setminus S_{0}$.
So by compactness there exist
$\mfP_{1},\dots,\mfP_{n}\in\mcS_{\mfp}^{\tau'}(F)\setminus S_{0}$
such that with $S_{i}':=S_{y_{\mfP_{i}}}$,
we have
$$
    \mcS_\mfp^{\tau'}(F)=S_0\cup S_{1}'\cup\dots\cup S_{n}'.
$$
Choose open-closed sets
$S_{1}\subseteq S_{1}',\dots,S_{n}\subseteq S_{n}'$
such that
$$
    \mcS_{\mfp}^{\tau'}(F) = S_{0}\sqcup S_{1}\sqcup\dots\sqcup S_{n}
$$
is a partition.
We seek to apply Theorem \ref{thm:approximation}
to the sets $S_0,S_1,\dots,S_n$,
the elements
$x_0=t_\mfp^{-1}$,
$x_1=y_{\mfP_1},\dots,x_n=y_{\mfP_n}$
and $z_0=t_\mfp,\dots,z_n=t_\mfp$.
To verify that the hypothesis of the theorem holds,
we argue as follows:
let $w$ be any valuation on $F$ that is a common coarsening of valuations
$v_{\mfP}$ and $v_{\mfQ}$
corresponding to primes $\mfP\in S_{i}$ and $\mfQ\in S_{j}$,
for $i\neq j$.
Note that $w$ is a proper coarsening of these
valuations since $S_{i}$ and $S_{j}$ are disjoint
and $v_\mfP$, $v_\mfQ$ are incomparable.
Then $w(z_{i})=w(z_{j})=0$ and $w(x_{i}-x_{j})\geq0$.
Therefore, by Theorem \ref{thm:approximation}, there exists
$y\in F$ such that
$$
    v_{\mfP}(y-x_i)>v_{\mfP}(t_\mfp),
$$
for each $\mfP\in S_i$ and each $i$.
In particular, for $\mfP\in S_0$ we have that $v_{\mfP}(y)=-v_{\mfP}(t_\mfp)<0$, hence
$$
    v_{\mfP}(\gamma_{\mfp, t_\mfp}^\tau(y))=eq^{f}v_{\mfP}(t_{\mfp})-v_{\mfP}(t_{\mfp})=(eq^{f}-1)v_{\mfP}(t_\mfp)\in\{0,\dots,e'eq^{f}\},
$$
cf.~Lemma \ref{lem:gamma.1}.
On the other hand,
for $\mfQ\in S_{i}$,
with $i>0$,
we get that $v_{\mfQ}(y-y_{\mfP_i})>v_{\mfQ}(t_\mfp)$.
Since we have
$v_{\mfQ}(\gamma_{\mfp, t_\mfp}^\tau(y_{\mfP_i}))<0$,
then $v_{\mfQ}(\gamma_{\mfp, t_\mfp}^\tau(y))<0$ by
Lemma \ref{lem:gamma.3}.
\end{proof}

Fix $n, m \in \mathbb{N}$
and let
$\tau'=(e',f')$,
where
$e'=me$
and
$f'=m!f$.
Let $\mathcal{E}$ be the class of fields $E$ which contain some $F/K$ with $[E:F]=m$ and $\pi_\mfp^\tau(F)=n$.
We adapt the arguments of Section \ref{section:number.fields}
in order to show that $\pi_{\mfp}^{\tau}(E)$ is bounded by a function of $m,n$.
We let
\begin{align*}
    D_{\mfp,m,n}^{\tau,(1)}(F)&:=\big\{x\in F\;\big|\;\exists a_{0},\ldots,a_{m-1}\in R_{\mfp,n}^{\tau}(F)\;:\;x^{m}+a_{m-1}x^{m-1}+\ldots+a_{0}=0\big\},
\end{align*}
and
\begin{align*}
    D_{\mfp,m,n}^{\tau,(2)}(F)&:=\bigg\{\frac{a}{1+t_{\mfp}\gamma_{\mfp, t_\mfp}^{\tau}(y)^{e'}b}\;\bigg|\;a,b\in D_{\mfp,m,n}^{\tau,(1)}(F),y\in F,
    \gamma_{\mfp, t_\mfp}^\tau(y) \neq \infty, 1+t_\mfp \gamma_{\mfp, t_\mfp}^\tau(y)^{e'} b \neq 0 \bigg\}.
\end{align*}

\begin{lemma}\label{lem:D.2.dio}
Both
$D_{\mfp,m,n}^{\tau,(1)}$ and $D_{\mfp,m,n}^{\tau,(2)}$
are $1$-dimensional diophantine families over $K$.
\end{lemma}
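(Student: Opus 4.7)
The plan is to deduce both statements mechanically from the closure properties of diophantine families established in Section~\ref{section:diophantine}. The two main inputs I would use are Example~\ref{ex:Gamma.n.diophantine}, which says that $R_{\mfp,n}^\tau$ is a $1$-dimensional diophantine family over $K$, and Remark~\ref{rmk:gamma.diophantine}, which says the same for the Kochen operator $\gamma_{\mfp,t_\mfp}^\tau$.

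For $D_{\mfp,m,n}^{\tau,(1)}$, I would first iterate Lemma~\ref{lem:dio.2} to obtain the $m$-dimensional diophantine family $(R_{\mfp,n}^\tau)^m$ over $K$, say defined by polynomials $h_1,\dots,h_s \in K[A_0,\dots,A_{m-1},Y_1,\dots,Y_t]$ (with $A_0, \dots, A_{m-1}$ as free variables and the $Y_i$ existentially quantified). Unwinding the definition of $D_{\mfp,m,n}^{\tau,(1)}$, this family is defined by the $s+1$ polynomials $h_1,\dots,h_s$ together with $X^m + A_{m-1}X^{m-1} + \ldots + A_0$ in $K[X,A_0,\dots,A_{m-1},Y_1,\dots,Y_t]$, with $X$ as the (one) free variable and all of $A_0,\dots,A_{m-1},Y_1,\dots,Y_t$ existentially quantified.

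For $D_{\mfp,m,n}^{\tau,(2)}$, I would then apply Lemma~\ref{lem:dio.2} to form the $3$-dimensional diophantine family $D_{\mfp,m,n}^{\tau,(1)} \times D_{\mfp,m,n}^{\tau,(1)} \times \gamma_{\mfp,t_\mfp}^\tau$ over $K$. Applying Lemma~\ref{lem:dio.rational} with the single rational function $(A,B,W) \mapsto A \cdot (1 + t_\mfp W^{e'} B)^{-1}$ then produces a $1$-dimensional diophantine family whose value at each $F/K$ coincides exactly with $D_{\mfp,m,n}^{\tau,(2)}(F)$, since Lemma~\ref{lem:dio.rational} automatically excludes the tuples on which the denominator vanishes.

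There is no substantial obstacle here: both statements reduce to a routine assembly of lemmas from Section~\ref{section:diophantine}. The only mild point to verify is the coprimality hypothesis of Lemma~\ref{lem:dio.rational}, but the numerator $A$ is irreducible in $K[A,B,W]$ and does not divide $1 + t_\mfp W^{e'} B$ (whose constant term viewed as a polynomial in $A$ equals $1$), so coprimality is automatic.
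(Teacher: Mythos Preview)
Your proposal is correct and follows essentially the same approach as the paper: for $D_{\mfp,m,n}^{\tau,(1)}$ the paper simply invokes Example~\ref{ex:Gamma.n.diophantine} and says the claim ``immediately follows'' (your argument just unpacks this), and for $D_{\mfp,m,n}^{\tau,(2)}$ the paper applies Lemma~\ref{lem:dio.rational} to the $3$-dimensional family $D_{\mfp,m,n}^{\tau,(1)}\times D_{\mfp,m,n}^{\tau,(1)}\times\gamma_{\mfp,t_\mfp}^\tau$ and the rational function $X_1(1+t_\mfp X_3^{e'}X_2)^{-1}$, exactly as you propose. Your explicit verification of the coprimality hypothesis is a nice touch that the paper leaves implicit.
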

\begin{proof}
This is very similar to
Lemma \ref{lem:D.dio}.
This time we use the fact that
$R_{\mfp,n}^{\tau}$ is a $1$-dimensional diophantine family over $K$,
as seen in Example \ref{ex:Gamma.n.diophantine}.
From this is immediately follows that $D_{\mfp,m,n}^{\tau,(1)}$ is a $1$-dimensional diophantine family over $K$.
To see that $D_{\mfp,m,n}^{\tau,(2)}$ is a $1$-dimensional diophantine family over $K$ we now apply Lemma \ref{lem:dio.rational}
to the $3$-dimensional diophantine family $D_{\mfp,m,n}^{\tau,(1)}\times D_{\mfp,m,n}^{\tau,(1)}\times\gamma_{\mfp,t_\mfp}^\tau$ and the rational function
$X_1(1+t_\mfp X_3^{e'}X_2)^{-1}$.
\end{proof}

\begin{proposition}\label{prp:D.2.property.2}
For every $E\supseteq K$ we have
$D_{\mfp,m,n}^{\tau,(2)}(E)\subseteq R_{\mfp}^{\tau}(E)$.
\end{proposition}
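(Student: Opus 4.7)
The strategy is to verify the inclusion pointwise over the primes: fix an arbitrary prime $\mfP\in\mcS_{\mfp}^{\tau}(E)$, take a typical element
$$
 \frac{a}{1+t_{\mfp}\gamma_{\mfp, t_\mfp}^{\tau}(y)^{e'}b}\in D_{\mfp,m,n}^{\tau,(2)}(E)
$$
with $a,b\in D_{\mfp,m,n}^{\tau,(1)}(E)$ and $y\in E$, and show it lies in $\mathcal{O}_{\mfP}$. Since $\mfP$ was arbitrary, this gives membership in $R_{\mfp}^{\tau}(E)=\bigcap_{\mfP\in\mcS_{\mfp}^{\tau}(E)}\mathcal{O}_{\mfP}$.

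The first step is to show that $D_{\mfp,m,n}^{\tau,(1)}(E)\subseteq\mathcal{O}_{\mfP}$. By definition, any $a\in D_{\mfp,m,n}^{\tau,(1)}(E)$ satisfies a monic polynomial equation of degree at most $m$ whose coefficients lie in $R_{\mfp,n}^{\tau}(E)\subseteq R_{\mfp}^{\tau}(E)\subseteq\mathcal{O}_{\mfP}$. As $\mathcal{O}_{\mfP}$ is a valuation ring of $E$, it is integrally closed in $E$, so $a\in\mathcal{O}_{\mfP}$; likewise for $b$. The second step uses that the Kochen operator already lands in the holomorphy ring: by the very definition of the Kochen ring and the containment $\Gamma_{\mfp}^{\tau}(E)\subseteq R_{\mfp}^{\tau}(E)\subseteq\mathcal{O}_{\mfP}$ recalled in Section~\ref{subsection:p-valuations}, we have $\gamma_{\mfp, t_\mfp}^{\tau}(y)\in\mathcal{O}_{\mfP}$ whenever $y$ is not a pole of $\gamma_{\mfp,t_\mfp}^\tau$.

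Combining these observations, $v_{\mfP}\bigl(t_{\mfp}\gamma_{\mfp, t_\mfp}^{\tau}(y)^{e'}b\bigr)\geq v_{\mfP}(t_{\mfp})\geq 1$, so the denominator $1+t_{\mfp}\gamma_{\mfp, t_\mfp}^{\tau}(y)^{e'}b$ reduces to $1$ modulo the maximal ideal of $\mathcal{O}_{\mfP}$ and is in particular a unit in $\mathcal{O}_{\mfP}$. Dividing $a\in\mathcal{O}_{\mfP}$ by this unit keeps the quotient in $\mathcal{O}_{\mfP}$, which is what was required.

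The argument is essentially routine given the groundwork: no genuine obstacle presents itself, because all three ingredients --- the integral closedness of $\mathcal{O}_{\mfP}$ in $E$, the containment $\gamma_{\mfp, t_\mfp}^{\tau}(E)\subseteq R_{\mfp}^{\tau}(E)$, and the invertibility of elements of $1+t_{\mfp}\mathcal{O}_{\mfP}$ --- are already available. The only aspect worth flagging is that nothing in the proof uses the specific exponent $e'=me$ or the specific value $f'=m!f$; these choices are only needed later, to ensure that a companion inclusion $D_{\mfp,m,n}^{\tau,(2)}(E)\supseteq R_{\mfp}^{\tau}(E)$ (to be proved separately) goes through.
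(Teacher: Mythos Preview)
Your proof is correct and follows essentially the same approach as the paper's: both show $D_{\mfp,m,n}^{\tau,(1)}(E)\subseteq R_{\mfp}^{\tau}(E)$ via integral closedness, then observe that the Kochen operator takes values in $\mathcal{O}_\mfP$ for $\mfP\in\mcS_\mfp^\tau(E)$ so that the denominator $1+t_\mfp\gamma_{\mfp,t_\mfp}^\tau(y)^{e'}b$ is a $\mfP$-unit. The only cosmetic difference is that the paper invokes integral closedness of $R_{\mfp}^{\tau}(E)$ itself rather than of each $\mathcal{O}_\mfP$ separately.
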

\begin{proof}
Since $R_{\mfp}^{\tau}(E)$ is integrally closed in $E$
and $R_{\mfp,n}^\tau(E)\subseteq R_\mfp^\tau(E)$,
we have
$D_{\mfp,m,n}^{\tau,(1)}(E)\subseteq R_{\mfp}^{\tau}(E)$.
Let
$\mfP\in\mcS_{\mfp}^{\tau}(E)$.
Then $v_{\mfP}(t_{\mfp})>0$.
Furthermore,
for $y\in E$
and $b\in R_{\mfp}^{\tau}(E)$,
we have
$v_{\mathfrak{P}}(\gamma_{\mfp, t_\mfp}^{\tau}(y)^{e'}b)\geq0$,
hence
$v_{\mfP}(1+t_{\mfp}\gamma_{\mfp, t_\mfp}^{\tau}(y)^{e'}b)=0$.
Therefore
elements of the form
$a(1+t_{\mfp}\gamma_{\mfp, t_\mfp}^{\tau}(y)^{e'}b)^{-1}$
are contained in $R_{\mathfrak{p}}^{\tau}(E)$,
where $a,b\in D_{\mfp,m,n}^{\tau,(1)}(E)$ and $y\in E$.
This establishes $D_{\mfp,m,n}^{\tau,(2)}(E)\subseteq R_{\mfp}^{\tau}(E)$.
\end{proof}

\begin{lemma}\label{lem:D.2.i}
For every $E\in\mathcal{E}$ we have
$R_{\mfp}^{\tau'}(E)\subseteq D_{\mfp,m,n}^{\tau,(1)}(E)$.
\end{lemma}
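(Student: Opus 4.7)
The plan is to realize each $r \in R_\mfp^{\tau'}(E)$ as a root of some monic polynomial of degree exactly $m$ with coefficients in $R_{\mfp,n}^\tau(E)$, which by definition places it in $D_{\mfp,m,n}^{\tau,(1)}(E)$. I pick $F \subseteq E$ witnessing $E \in \mathcal{E}$, so $[E:F]=m$ and $\pi_\mfp^\tau(F)=n$, and let $g(X)=X^d+c_{d-1}X^{d-1}+\dots+c_0 \in F[X]$ be the minimal polynomial of $r$ over $F$, where $d:=[F(r):F]$ divides $m$. If I can show that all $c_i$ lie in $R_\mfp^\tau(F)$, then by the hypothesis $\pi_\mfp^\tau(F)=n$ they lie in $R_{\mfp,n}^\tau(F)$, and hence in $R_{\mfp,n}^\tau(E)$ by the syntactic inclusion $R_{\mfp,n}^\tau(F) \subseteq R_{\mfp,n}^\tau(E)$ (immediate from $\gamma_{\mfp,t_\mfp}^\tau(F) \subseteq \gamma_{\mfp,t_\mfp}^\tau(E)$ for each choice of uniformizer $t_\mfp$). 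The monic degree-$m$ polynomial $X^{m-d}g(X)$ then has $r$ as a root and coefficients that are either some $c_i$ or $0$, all lying in $R_{\mfp,n}^\tau(E)$ as required.

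To establish $c_i \in R_\mfp^\tau(F)$, I fix $\mfP \in \mcS_\mfp^\tau(F)$ and argue that $r$ is integral over $\mathcal{O}_\mfP$; since $\mathcal{O}_\mfP$ is a valuation ring, hence integrally closed in $F$, the coefficients of the minimal polynomial of $r$ over $F$ will then automatically lie in $\mathcal{O}_\mfP$. By the standard description of the integral closure of a valuation ring in an algebraic extension as the intersection of the valuation rings of its extensions, it suffices to verify that $v_{\mfP'}(r) \geq 0$ for every extension $\mfP'$ of $\mfP$ to $F(r)$.

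For such a $\mfP'$, I pick any extension $\mfQ$ of $\mfP'$ further up to $E$. The key claim is $\mfQ \in \mcS_\mfp^{\tau'}(E)$, which immediately yields $v_\mfQ(r) \geq 0$ and hence $v_{\mfP'}(r) \geq 0$. By the fundamental inequality of valuation theory, $e(\mfQ|\mfP) f(\mfQ|\mfP) \leq [E:F] = m$, so in particular $e(\mfQ|\mfP), f(\mfQ|\mfP) \leq m$. Combining with multiplicativity of ramification indices and residue degrees in towers gives
\[
 e(\mfQ|\mfp) \;=\; e(\mfQ|\mfP)\, e(\mfP|\mfp) \;\leq\; m \cdot e \;=\; e',
\]
and $f(\mfQ|\mfp) = f(\mfQ|\mfP)\, f(\mfP|\mfp)$ divides $m!\cdot f = f'$, since any positive integer $\leq m$ divides $m!$ and $f(\mfP|\mfp) \mid f$ by the hypothesis $\mfP \in \mcS_\mfp^\tau(F)$.

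The main conceptual obstacle is precisely the relative-type bookkeeping just executed: the very choice $f' = m!\, f$ in the setup (rather than the naive $m\, f$) is tailored to absorb the fact that $f(\mfQ|\mfP)$ need only be bounded by $[E:F]$ and need not divide it. Once this bound is absorbed into $\tau'$, everything else — the minimal-polynomial coefficient argument, the fundamental inequality, and the monotonicity of the families $R_{\mfp,n}^\tau$ under field extensions — is standard.
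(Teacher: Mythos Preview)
Your proof is correct and follows essentially the same approach as the paper: both arguments show that any prime of $E$ extending a prime in $\mathcal{S}_\mfp^\tau(F)$ lies in $\mathcal{S}_\mfp^{\tau'}(E)$, deduce that $R_\mfp^{\tau'}(E)$ sits inside the integral closure of $R_\mfp^\tau(F)=R_{\mfp,n}^\tau(F)$ in $E$, and then observe that degree-$m$ integrality over $R_{\mfp,n}^\tau(F)\subseteq R_{\mfp,n}^\tau(E)$ gives membership in $D_{\mfp,m,n}^{\tau,(1)}(E)$. Your version simply unpacks the paper's phrase ``by our choice of $\tau'$'' into the explicit $e,f$ bookkeeping and works element-by-element rather than with the integral closure $A$ as a whole.
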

\begin{proof}
Choose $F$ such that 
$[E:F]=m$ and $\pi_\mfp^\tau(F)=n$,
although the choice of $F$ will not matter.
Let $S$ be the set of primes of $E$
(of arbitrary type) lying over elements of
$\mathcal{S}_{\mathfrak{p}}^{\tau}(F)$.
By our choice of $\tau'$,
we have
$S\subseteq\mcS_{\mfp}^{\tau'}(E)$.
If we denote by $A$ the integral closure of
$R_{\mathfrak{p}}^{\tau}(F)$ in $E$,
then
$A$ is the holomorphy ring corresponding to $S$ and we have
$$
    R_{\mathfrak{p}}^{\tau'}(E)\subseteq A\subseteq R_{\mathfrak{p}}^{\tau}(E).
$$
Since $\pi_{\mfp}^{\tau}(F)=n$, we have
$R_{\mfp}^{\tau}(F)=R_{\mfp,n}^{\tau}(F)$;
and trivially
$R_{\mfp,n}^{\tau}(F)\subseteq R_{\mfp,n}^{\tau}(E)$.
As the degree of the extension $E/F$ is $m$,
$D_{\mfp,m,n}^{\tau,(1)}(E)$
contains the integral closure of $R_{\mfp}^{\tau}(F)$ in $E$, which is $A$.
In particular $R_{\mfp}^{\tau'}(E)\subseteq D_{\mfp,m,n}^{\tau,(1)}(E)$.
\end{proof}

\begin{proposition}\label{prp:D.2.property.1}
For every $E\in\mathcal{E}$ we have
$D_{\mfp,m,n}^{\tau,(2)}(E)=R_{\mfp}^{\tau}(E)$.
\end{proposition}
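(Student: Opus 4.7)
The inclusion $D_{\mfp,m,n}^{\tau,(2)}(E)\subseteq R_\mfp^\tau(E)$ is Proposition~\ref{prp:D.2.property.2}. For the reverse inclusion $R_\mfp^\tau(E)\subseteq D_{\mfp,m,n}^{\tau,(2)}(E)$, the plan is to mimic the proof of Proposition~\ref{prp:D.property.1} with two substitutions. In place of the containment $T_A(L)+T_B(L)\supseteq\bigcap_{\mfP\in\mcS_\mfp^*(L)}\mathcal{O}_\mfP$ from Proposition~\ref{prp:AB.2}(ii), I will use Lemma~\ref{lem:D.2.i}, which gives $R_\mfp^{\tau'}(E)\subseteq D_{\mfp,m,n}^{\tau,(1)}(E)$; and since $E$ need not be a number field, weak approximation is no longer available and must be replaced by Theorem~\ref{thm:approximation} together with the compactness of $\mcS_\mfp^{\tau'}(E)$, in the style of the proof of Proposition~\ref{prp:finding.y}. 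The case $m=1$ is immediate, since then $\tau'=\tau$ and I may take $b=0$, so I assume $m\geq 2$.

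Given $r\in R_\mfp^\tau(E)$, first apply Proposition~\ref{prp:finding.y} to the open-closed set $S_0:=\mcS_\mfp^\tau(E)\subseteq\mcS_\mfp^{\tau'}(E)$ to obtain $y\in E$ with $v_\mfP(\gamma_{\mfp,t_\mfp}^\tau(y))\geq 0$ on $S_0$ and $v_\mfP(\gamma_{\mfp,t_\mfp}^\tau(y))<0$ on $\mcS_\mfp^{\tau'}(E)\setminus S_0$. Inspecting the construction of $y$ in that proof (via Lemmas~\ref{lem:gamma.1}--\ref{lem:gamma.3}) together with the inequality $e'=me\geq e+1$ that holds for $m\geq 2$, I can arrange that in fact $v_\mfP(t_\mfp\gamma_{\mfp,t_\mfp}^\tau(y)^{e'})\leq 0$ on $\mcS_\mfp^{\tau'}(E)\setminus S_0$, refining the partition used in the proof of Proposition~\ref{prp:finding.y} if necessary.

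The crucial step is then to construct $b\in E$ satisfying (a) $v_\mfP(b)\geq 0$ for $\mfP\in S_0$ and (b) $v_\mfP\bigl(b+(t_\mfp\gamma_{\mfp,t_\mfp}^\tau(y)^{e'})^{-1}\bigr)\geq\max\bigl\{0,\,-v_\mfP(rt_\mfp\gamma_{\mfp,t_\mfp}^\tau(y)^{e'})\bigr\}$ for $\mfP\in\mcS_\mfp^{\tau'}(E)\setminus S_0$. Following the covering argument in the proof of Proposition~\ref{prp:finding.y}, the integer-valued quantities appearing in (b) are locally constant, so the compactness of $\mcS_\mfp^{\tau'}(E)$ yields a finite open-closed partition $\mcS_\mfp^{\tau'}(E)=S_0\sqcup S_1\sqcup\dots\sqcup S_k$ on which the approximation target for $b$ and its required precision are uniform; Theorem~\ref{thm:approximation}, applied with target $0$ on $S_0$ and target $-(t_\mfp\gamma_{\mfp,t_\mfp}^\tau(y)^{e'})^{-1}$ on each $S_i$ ($i\geq 1$), then produces $b$.

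Finally, setting $a:=r(1+t_\mfp\gamma_{\mfp,t_\mfp}^\tau(y)^{e'}b)$ and using the identity $1+t_\mfp\gamma^{e'}b=t_\mfp\gamma^{e'}\bigl(b+(t_\mfp\gamma^{e'})^{-1}\bigr)$ on the complement of $S_0$, the valuation computation from the proof of Proposition~\ref{prp:D.property.1}---now fed by (a), (b), and the bound $v_\mfP(t_\mfp\gamma_{\mfp,t_\mfp}^\tau(y)^{e'})\leq 0$---gives $v_\mfP(a),v_\mfP(b)\geq 0$ at every $\mfP\in\mcS_\mfp^{\tau'}(E)$. Hence $a,b\in R_\mfp^{\tau'}(E)\subseteq D_{\mfp,m,n}^{\tau,(1)}(E)$ by Lemma~\ref{lem:D.2.i}, and therefore $r=a/(1+t_\mfp\gamma_{\mfp,t_\mfp}^\tau(y)^{e'}b)\in D_{\mfp,m,n}^{\tau,(2)}(E)$. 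The main obstacle will be the approximation step: verifying that the targets prescribed in (b), whose precisions depend on $\mfP$, satisfy the compatibility hypothesis of Theorem~\ref{thm:approximation} on all common coarsenings of the chosen valuations, in parallel with (but more delicate than) the corresponding verification in the proof of Proposition~\ref{prp:finding.y}.
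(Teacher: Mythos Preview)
Your plan follows the same overall strategy as the paper --- obtain $y$ from Proposition~\ref{prp:finding.y}, then obtain $b$ from Theorem~\ref{thm:approximation}, and conclude via Lemma~\ref{lem:D.2.i} --- but you make one choice that diverges from the paper and creates the very obstacle you flag at the end. You take $S_0=\mcS_\mfp^\tau(E)$; the paper instead takes
\[
  S_0=\{\mfP\in\mcS_\mfp^{\tau'}(E):v_\mfP(x)\geq0\},
\]
which depends on the element $x\in R_\mfp^\tau(E)$ and contains $\mcS_\mfp^\tau(E)$. With this choice the complement $S_1$ is a single closed set on which $v_\mfP(x)<0$ automatically. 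Hence for a common proper coarsening $w$ of $v_\mfP$ ($\mfP\in S_0$) and $v_\mfQ$ ($\mfQ\in S_1$) one has $w(x)=0$ from the sign change, and $w(t_\mfp)=w(\gamma_{\mfp,t_\mfp}^\tau(y))=0$ from the bound $v_\mfP(\gamma_{\mfp,t_\mfp}^\tau(y))\in[0,e'eq^f]$ on $S_0$. The compatibility hypothesis of Theorem~\ref{thm:approximation} is then a one-line check, with a single precision $z=(xt_\mfp\gamma_{\mfp,t_\mfp}^\tau(y)^{e'})^{-1}$ on $S_1$; no compactness argument or further partition is needed for $b$.

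With your $S_0$, primes $\mfQ$ in the complement may have $v_\mfQ(r)$ of either sign and, crucially, lying outside the convex subgroup $\mathbb{Z}$ of the value group. Your assertion that the quantities in (b) are ``integer-valued'' and ``locally constant'' is therefore not justified: neither $v_\mfQ(r)$ nor $v_\mfQ(\gamma_{\mfp,t_\mfp}^\tau(y))$ need lie in $\mathbb{Z}$, and the map $\mfQ\mapsto v_\mfQ(r)$ is not locally constant in any useful sense. Consequently the precisions $z_i$ you would assign to different pieces need not satisfy $w(z_i)=w(z_j)$ at common coarsenings, and this is exactly the step you leave open. The paper's $x$-dependent choice of $S_0$ is the missing idea that dissolves the difficulty. (Incidentally, your extra arrangement $v_\mfP(t_\mfp\gamma_{\mfp,t_\mfp}^\tau(y)^{e'})\leq0$ on the complement is already automatic from $v_\mfP(\gamma_{\mfp,t_\mfp}^\tau(y))<0$ and $v_\mfP(t_\mfp)\leq e'$, by convexity of $\mathbb{Z}$ in the value group; no refinement of Proposition~\ref{prp:finding.y} is needed.)
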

\begin{proof}
In view of 
Proposition \ref{prp:D.2.property.2},
it only remains to show that
$R_{\mfp}^{\tau}(E)\subseteq D_{\mfp,m,n}^{\tau,(2)}(E)$.
Let $x\in R_{\mfp}^{\tau}(E)$.
In fact, we aim to find
$b\in R_{\mfp}^{\tau'}(E)$ and $y\in E$ with
$$
    x(1+t_{\mfp}\gamma_{\mfp, t_\mfp}^{\tau}(y)^{e'}b)\in R_{\mfp}^{\tau'}(E),
$$
which we will do by applying Theorem \ref{thm:approximation}.
As $R_\mfp^{\tau'}(E)\subseteq D_{\mfp,m,n}^{\tau,(1)}$ by 
Lemma \ref{lem:D.2.i},
this will show that $x\in D_{\mfp,m,n}^{\tau,(2)}(E)$.
We define the sets
\begin{flalign*}
&&S_{0}&:=\{\mathfrak{P}\in\mathcal{S}_{\mathfrak{p}}^{\tau'}(E)\;|\;v_{\mathfrak{P}}(x)\geq0\}&\\
\text{and}&& S_{1}&:=\mathcal{S}_{\mathfrak{p}}^{\tau'}(E)\setminus S_{0}.&
\end{flalign*}
Note that $S_{0}$ and $S_{1}$ are
open-closed in $\mcS_{\mfp}^{\tau'}(E)$
and
$S_{1}\cap\mathcal{S}_{\mathfrak{p}}^{\tau}(E)=\emptyset$.
We find a suitable element $y\in E$
by a direct application of
Proposition \ref{prp:finding.y}:
we obtain $y\in E$ such that
\begin{align*}
v_{\mfP}(\gamma_{\mfp, t_\mfp}^{\tau}(y))
\left\{
\begin{array}{ll}
\in[0,e'eq^{f}],&\mbox{if }\mfP\in S_{0},\\
<0,&\mbox{if }\mfP\in S_{1}.
\end{array}\right.
\end{align*}
We obtain a suitable $b\in E$ by solving a more straightforward approximation problem: By Theorem \ref{thm:approximation},
there exists $b\in R_{\mfp}^{\tau'}(E)$ such that
\begin{align*}
    v_{\mfP}(b)&\geq0,&
    \mbox{if }\mfP\in S_{0},\\
    \text{and}\hspace{8mm}
    v_{\mfP}(b+t_{\mfp}^{-1}\gamma_{\mfp, t_\mfp}^{\tau}(y)^{-e'})&\geq v_{\mfP}(x^{-1}t_{\mfp}^{-1}\gamma_{\mfp, t_\mfp}^{\tau}(y)^{-e'}),
    &\mbox{if }\mfP\in S_{1}.
\end{align*}
Indeed,
if a valuation $w$ on $E$ coarsens $v_\mfP$ and $v_\mfQ$ for $\mfP\in S_{0}$ and $\mfQ\in S_{1}$, 
$v_\mfP(x)\geq0$ and $v_\mfQ(x)<0$ imply that $w(x)=0$,
and $v_\mfP(\gamma_{\mfp,t_\mfp}^\tau(y))\in[0,e'eq^f]$
implies that $w(\gamma_{\mfp, t_\mfp}^{\tau}(y))=0$.
Therefore also
$w(t_{\mfp}\gamma_{\mfp, t_\mfp}^{\tau}(y)^{e'})=0$
and
$w(xt_{\mfp}\gamma_{\mfp, t_\mfp}^{\tau}(y)^{e'})=0$.
In particular, the hypothesis of the theorem
is satisfied,
and the $b\in E$ so obtained 
lies in $R_\mfp^{\tau'}(E)$.

For $\mfP\in S_{0}$, we have $v_{\mfP}(t_{\mfp}^{-1}\gamma_{\mfp, t_\mfp}^{\tau}(y)^{-e'})<0$,
hence
\begin{align*}
v_{\mfP}(b+t_{\mfp}^{-1}\gamma_{\mfp, t_\mfp}^{\tau}(y)^{-e'})
\left\{
\begin{array}{ll}
=v_{\mfP}(t_{\mfp}^{-1}\gamma_{\mfp, t_\mfp}^{\tau}(y)^{-e'}),&\mbox{if }\mfP\in S_{0},\\
\geq v_{\mfP}(x^{-1}t_{\mfp}^{-1}\gamma_{\mfp, t_\mfp}^{\tau}(y)^{-e'}),&\mbox{if }\mfP\in S_{1},
\end{array}\right.
\end{align*}
i.e.~
\begin{align*}
\begin{array}{rlr}
v_{\mathfrak{P}}(1+t_{\mfp}\gamma_{\mfp, t_\mfp}^{\tau}(y)^{e'}b)&=0,
&\mbox{if }
\mathfrak{P}\in S_{0},\\
v_{\mathfrak{P}}(x(1+t_{\mfp}\gamma_{\mfp, t_\mfp}^{\tau}(y)^{e'}b))
&\geq0,
&\mbox{if }
\mathfrak{P}\in S_{1}.
\end{array}
\end{align*}
Since $v_\mfP(x)\geq0$ for $\mfP\in S_0$, we obtain that
$x(1+t_{\mfp}\gamma_{\mfp, t_\mfp}^{\tau}(y)^{e'}b)\in R_{\mathfrak{p}}^{\tau'}(E)$.
\end{proof}

\begin{theorem}\label{thm:p_pythagoras_in_fin_extn}
There is a function
$\alpha_{\mathfrak{p}}^{\tau}:\mathbb{N}\times\mathbb{N}\longrightarrow\mathbb{N}$
such that 
\begin{align*}
\pi_{\mathfrak{p}}^{\tau}(E)&\leq\alpha_{\mathfrak{p}}^{\tau}(\pi_{\mathfrak{p}}^{\tau}(F),[E:F]),
\end{align*}
for every field extension $E/F$ with $\pi_\mfp^\tau(F)<\infty$.
\end{theorem}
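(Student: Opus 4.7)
The plan is to assemble the machinery built up in this section into a single compactness argument, in direct analogy with the proof of Theorem~\ref{thm:main.2}. Fix $n,m\in\mathbb{N}$ and let $\mathcal{E}=\mathcal{E}_{m,n}$ denote the class of extensions $E/K$ for which there exists an intermediate field $F$, $K\subseteq F\subseteq E$, with $[E:F]=m$ and $\pi_\mfp^\tau(F)=n$. The diophantine family $D_{\mfp,m,n}^{\tau,(2)}$ of Lemma~\ref{lem:D.2.dio} was engineered precisely so that, by Proposition~\ref{prp:D.2.property.1}, it computes $R_\mfp^\tau$ on every member of $\mathcal{E}$, while by Proposition~\ref{prp:D.2.property.2} it is always contained in $R_\mfp^\tau$, regardless of whether $E$ lies in $\mathcal{E}$.

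The key step is then an application of compactness. Since $R_\mfp^\tau(E)=\bigcup_{k\in\mathbb{N}}R_{\mfp,k}^\tau(E)$ for every extension $E/K$ and each $R_{\mfp,k}^\tau$ is a $1$-dimensional diophantine family over $K$ (Example~\ref{ex:Gamma.n.diophantine}), Proposition~\ref{prp:D.2.property.2} tells us that
\[
 D_{\mfp,m,n}^{\tau,(2)}(E)\subseteq\bigcup_{k\in\mathbb{N}}R_{\mfp,k}^\tau(E)
\]
for every extension $E/K$. By Proposition~\ref{prop:compactness}, and because the chain $(R_{\mfp,k}^\tau(E))_{k\in\mathbb{N}}$ is increasing, there exists $N=N(m,n)\in\mathbb{N}$, depending only on $m$ and $n$, such that
\[
 D_{\mfp,m,n}^{\tau,(2)}(E)\subseteq R_{\mfp,N}^\tau(E)
\]
for every extension $E/K$.

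To conclude, suppose $E/F$ is any extension with $\pi_\mfp^\tau(F)=n<\infty$ and $[E:F]=m$. Since $F\supseteq K$, the field $E$ belongs to $\mathcal{E}_{m,n}$, so Proposition~\ref{prp:D.2.property.1} gives $R_\mfp^\tau(E)=D_{\mfp,m,n}^{\tau,(2)}(E)\subseteq R_{\mfp,N}^\tau(E)\subseteq R_\mfp^\tau(E)$. Hence $R_\mfp^\tau(E)=R_{\mfp,N}^\tau(E)$, and therefore $\pi_\mfp^\tau(E)\leq N$. Setting $\alpha_\mfp^\tau(n,m):=N(m,n)$ yields the desired bound.

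I do not expect real difficulty in this final assembly step: the substantive work has already been done in Proposition~\ref{prp:finding.y} and Proposition~\ref{prp:D.2.property.1}, where the approximation theorem was used to show that $R_\mfp^\tau(E)$ is captured by $D_{\mfp,m,n}^{\tau,(2)}(E)$ for $E\in\mathcal{E}$. The only point requiring slight care is to keep $N$ uniform in $E$ while letting it depend on $m,n$: this is exactly why we chose to apply compactness to the single diophantine family $D_{\mfp,m,n}^{\tau,(2)}$, with $m,n$ baked into its construction via $\tau'=(me,m!f)$ and the relative Pythagoras exponent $n$.
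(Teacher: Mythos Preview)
Your proposal is correct and follows essentially the same route as the paper's proof. The only cosmetic difference is that the paper packages the compactness step by invoking Proposition~\ref{prop:compactness_bounded_pi} directly (with hypotheses (i) and (ii) verified by Propositions~\ref{prp:D.2.property.1} and~\ref{prp:D.2.property.2} respectively), whereas you unpack that proposition and apply Proposition~\ref{prop:compactness} by hand; since Proposition~\ref{prp:D.2.property.2} already gives the containment $D_{\mfp,m,n}^{\tau,(2)}(E)\subseteq R_\mfp^\tau(E)$ for all $E/K$, your unpacking even bypasses the $p$-adic Lefschetz step inside Proposition~\ref{prop:compactness_bounded_pi}.
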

\begin{proof}
Let $m,n\in\mathbb{N}$.
We apply Proposition \ref{prop:compactness_bounded_pi}
to the class $\mathcal{E}$
and the diophantine family
$D_{\mfp,m,n,}^{\tau,(2)}$,
where the two assumptions of Proposition \ref{prop:compactness_bounded_pi}
are verified in Proposition
\ref{prp:D.2.property.1}
and Proposition
\ref{prp:D.2.property.2},
respectively.
Thus there exists $N$ such that $\pi_{\mfp}^{\tau}(E)\leq N$
for every $E\in\mathcal{E}$,
so we can choose $\alpha_{\mfp}^{\tau}(n,m)=N$.
\end{proof}

\section{Diophantine holomorphy rings of $p$-valuations}

By definition, in any field $F$ with finite $(\mfp, \tau)$-Pythagoras number the holomorphy ring $R_\mfp^\tau(F)$ is a diophantine subset. 
In this section we generalize this observation, by showing in Corollary~\ref{cor:holomorphy.diophantine} that the same applies to the holomorphy rings associated to arbitrary open-closed subsets of $\mathcal{S}_\mfp^\tau(F)$. Theorem \ref{prp:2.dimensional.diophantine} is a uniform version of this fact.

As a technical tool, it turns out to be useful to extend some of the ideas from diophantine families over fields to commutative algebras which are finite-dimensional vector spaces over fields.
To this end, we introduce a small piece of notation.
Write $X=(X_{1},...,X_{n})$ and $Y=(Y_{1},...,Y_{m})$.
For $f_{1},..,f_{r}\in K[X,Y]$ and for any commutative (unital, associative) $F$-algebra $B$, we write
\begin{align*}
P_{f_{1},...,f_{r}}(B):=\left\{x\in B^{n}\;|\;\exists y\in B^{m}:f_{1}(x,y)=\dots=f_{r}(x,y)=0\right\}.
\end{align*}
The following lemma is straightforward, but we include it for lack of a suitable reference.

\begin{lemma}\label{lem:algebra.quotient.fields}
Let $f_{1},...,f_{r}\in K[X,Y]$ and let $l\in\mathbb{N}$.
Then
\begin{align*}
F^{n}\cap P_{f^{l}_{1},...,f^{l}_{r}}(B)&=\bigcap_{\mathfrak{m}\in\mathrm{MaxSpec}(B)} (F^n \cap P_{f_{1},...,f_{r}}(B/\mathfrak{m})),
\end{align*}
for all extensions $F/K$, and all commutative $F$-algebras $B$ of dimension at most $l$.
Here $F$ is identified with its image in $B$ and $B/\mathfrak{m}$.
\end{lemma}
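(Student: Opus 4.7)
The plan is to prove the two inclusions of the stated equality separately, writing LHS for $F^n \cap P_{f_1^l, \dots, f_r^l}(B)$.

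For LHS $\subseteq$ RHS, the argument is formal: if $x \in F^n$ and $y \in B^m$ satisfy $f_i(x,y)^l = 0$ in $B$ for all $i$, then reducing modulo any maximal ideal $\mathfrak{m}$ gives $f_i(x, \bar y)^l = 0$ in the field $B/\mathfrak{m}$, whence $f_i(x,\bar y) = 0$ and so $x \in P_{f_1,\dots,f_r}(B/\mathfrak{m})$.

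For the reverse inclusion I would exploit the fact that $B$, being a finite-dimensional $F$-algebra, is Artinian, and so decomposes as a finite product $B \cong B_1 \times \cdots \times B_k$ of local Artinian rings $B_i$ with maximal ideals $\mathfrak{m}_i$, with the maximal ideals of $B$ being precisely the preimages of the $\mathfrak{m}_i$ under the projections. Given $x$ in the right-hand side, the hypothesis produces $\bar y^{(i)} \in (B_i/\mathfrak{m}_i)^m$ with $f_j(x, \bar y^{(i)}) = 0$ for each $i,j$. Lifting each $\bar y^{(i)}$ arbitrarily to $y^{(i)} \in B_i^m$ gives $f_j(x, y^{(i)}) \in \mathfrak{m}_i$, and setting $y = (y^{(1)}, \dots, y^{(k)}) \in B^m$ reduces matters to checking that $\mathfrak{m}_i^l = 0$ in each $B_i$.

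This last bound, which is the only substantive step, follows from the strict decrease of the chain $B_i \supsetneq \mathfrak{m}_i \supsetneq \mathfrak{m}_i^2 \supsetneq \cdots$ (until it reaches zero, by Nakayama applied to the Noetherian local ring $B_i$), combined with the fact that each nonzero successive quotient $\mathfrak{m}_i^j/\mathfrak{m}_i^{j+1}$ contributes at least $1$ to $\dim_F B_i$; hence the chain has length at most $\dim_F B_i \leq \dim_F B \leq l$, giving $\mathfrak{m}_i^l = 0$ as required. This nilpotency bound, which links the exponent $l$ in the statement to the dimension hypothesis on $B$, is the only place the assumption $\dim_F B \leq l$ enters and is the principal (if mild) obstacle in the argument.
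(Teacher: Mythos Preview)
Your proof is correct and follows essentially the same route as the paper's: both exploit that $B$ is Artinian with Jacobson radical nilpotent of exponent at most $l$, and both pass between $B$ and the product of its residue fields (you via the explicit Artinian decomposition $B \cong \prod B_i$, the paper via the Chinese Remainder Theorem for $B/\mathfrak{j}$). Your treatment of the bound $\mathfrak{m}_i^l = 0$ is in fact more detailed than the paper's, which simply asserts $\mathfrak{j}^l = 0$ after citing that the Jacobson radical of an Artinian ring is nilpotent.
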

\begin{proof}
Let $B$ be a commutative $F$-algebra which has dimension at most $l$ as an $F$-vector space. 
As $B$ is finite dimensional, it is Artinian,
hence the Jacobson radical $\mathfrak{j}$ of $B$ is nilpotent
(\cite[Prop.~8.4]{AM}), and therefore more precisely
$\mathfrak{j}^{l}=0$.
Then for all $s\in\{1,\dots,r\}$, all extensions $F/K$, all $a\in F$,  $x\in F^{n}$, and $y\in B^{m}$,
we have
\begin{align*}
f_{s}(x,y)^{l}=0&\Longleftrightarrow f_{s}(x,y+\mathfrak{j})=0\\
&\Longleftrightarrow f_{s}(x,y+\mathfrak{m})=0,\;\forall\mathfrak{m}\in\mathrm{MaxSpec}(B).
\end{align*}
The result now follows from the Chinese Remainder Theorem.
\end{proof}

\begin{lemma}\label{lem:interpret.algebra}
Let $f_{1},...,f_{r}\in K[X,Y]$ and let $k\in\mathbb{N}$.
There exists an $(n+k)$-dimensional diophantine family $D$ over $K$
such that
\begin{align*}
D(F)&=\Big\{(x,z)\in F^n\times F^k\;\Big|\;x\in P_{f_{1},...,f_{r}}(B_{z})\Big\},
\end{align*}
for all extensions $F/K$, and where $B_{z}$ denotes the commutative $F$-algebra
$$F[T]\Big/\Big(T^{k}+\sum_{i=0}^{k-1}z_{i}T^{i}\Big).$$
\end{lemma}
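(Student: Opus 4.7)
The plan is to realize $B_z$ as a free $F$-module of rank $k$ with basis $1, T, T^2, \dots, T^{k-1}$ (where $T$ denotes the image of the indeterminate), so that an element of $B_z^m$ corresponds to $mk$ coordinates in $F$. Under this identification, each equation $f_s(x,y) = 0$ in $B_z$ becomes a system of $k$ polynomial equations in $F$ over the coordinates, and the whole family will be defined by $rk$ such polynomial equations.

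Concretely, I would introduce auxiliary variables $Y_j^{(i)}$ for $j = 1, \dots, m$ and $i = 0, \dots, k-1$, thought of as the coordinates of $y_j \in B_z$ in the basis $1, T, \dots, T^{k-1}$. The constants $x_1, \dots, x_n \in F$ are identified with $x_i \cdot 1 \in B_z$. The key technical observation is that, for every integer $j \geq k$, the power $T^j$ reduces modulo $T^k + \sum_{i=0}^{k-1} z_i T^i$ to a polynomial in $T$ of degree less than $k$ whose coefficients are polynomials in $z_0, \dots, z_{k-1}$ with integer coefficients. This is verified by induction on $j$, using the single-step rule $T^k = -\sum_{i=0}^{k-1} z_i T^i$.

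Applying this reduction to $f_s\bigl(X, \sum_i Y_j^{(i)} T^i\bigr)$ expanded out as a polynomial in $T$, one obtains an expression of the form
\[
 f_s\Bigl(X,\, \sum_i Y_1^{(i)} T^i, \dots, \sum_i Y_m^{(i)} T^i \Bigr) \equiv \sum_{i=0}^{k-1} g_{s,i}(X, Z, Y^{(0)}, \dots, Y^{(k-1)}) \, T^i \pmod{T^k + \sum_i z_i T^i},
\]
for explicit polynomials $g_{s,i} \in K[X_1, \dots, X_n, Z_0, \dots, Z_{k-1}, Y_1^{(0)}, \dots, Y_m^{(k-1)}]$. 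The condition $f_s(x, y) = 0$ in $B_z$ is then equivalent to the simultaneous vanishing of $g_{s,0}, \dots, g_{s,k-1}$ at the given coordinates.

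Taking $D$ to be the $(n+k)$-dimensional diophantine family over $K$ defined by the collection $\{g_{s,i}\}_{s,i}$ with free variables $X_1, \dots, X_n, Z_0, \dots, Z_{k-1}$ and existentially quantified auxiliary variables $Y_j^{(i)}$ delivers the claim. There is no real obstacle here — the only step requiring care is the inductive construction of the reduction map and the verification that the resulting $g_{s,i}$ lie in $K[X, Z, Y^{(\cdot)}]$; once this is in place, the equality $D(F) = \{(x,z) \in F^n \times F^k \mid x \in P_{f_1, \dots, f_r}(B_z)\}$ is immediate from unwinding the definitions.
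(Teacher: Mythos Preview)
Your proof is correct, but it follows a different elementary route than the paper's. You compute the reduction of each power $T^j$ modulo $g(Z,T)=T^k+\sum_i Z_iT^i$ inductively, obtaining explicit polynomials $g_{s,i}\in K[X,Z,Y^{(\cdot)}]$ such that the vanishing of $f_s$ in $B_z$ is equivalent to the simultaneous vanishing of the $g_{s,i}$. The paper instead avoids this induction: it forms $\hat f_s(X,U,T)=f_s(X,\sum_i U_{i,1}T^i,\dots,\sum_i U_{i,m}T^i)$ and encodes the condition ``$\hat f_s(x,u,T)\equiv 0$ in $B_z$'' as the divisibility of $\hat f_s(x,u,T)$ by $g(z,T)$ in $F[T]$, introducing further existential variables $W_l$ for the coefficients of the quotient and then taking $T$-coefficients of $\hat f_s - g\cdot\sum_l W_lT^l$. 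Your approach uses fewer auxiliary variables and yields more explicit defining polynomials; the paper's approach trades the inductive computation for a few extra existential quantifiers and a cleaner one-line description of the divisibility. The paper in fact alludes to your method when it mentions Weil restriction and quantifier-free interpretation as alternative viewpoints, before opting for its divisibility trick.
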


\begin{proof}
In a more advanced way, this construction can be described through the
Weil restriction of the affine variety cut out by the polynomials $f_{1},\dots,f_{r}$,
along the family of schemes
described by the $B_{z}$,
fibred over the parameter space $\mathbb{A}^{k}$.
Alternatively, from a model-theoretic standpoint, one can prove the statement by a quantifier-free interpretation of $B_{z}$ in $F$, uniformly in the parameter tuple $z$.
We give an elementary description instead.

We introduce two new tuples of variables $Z=(Z_i)_{0\leq i<k}$ and
$U=(U_{i,j})_{0\leq i<k,1\leq j\leq m}$.
We write $g(Z,T):=T^{k}+\sum_{i=0}^{k-1}Z_iT^{i}\in K[Z,T]$ and,
for each $s\in\{1,\dots,r\}$,
we let
\begin{align*}
\hat{f}_{s}(X,U,T)&:=f_{s}\Big(X,\sum_{i=0}^{k-1}U_{i,1}T^{i},...,\sum_{i=0}^{k-1}U_{i,m}T^{i}\Big).
\end{align*}
Choose $d\in\mathbb{N}$ to be the maximum of the degrees of the polynomials $\hat{f}_{s}$ in the variable $T$,
and introduce a new tuple of variables
$W=(W_{l})_{0\leq l \leq d}$.
Then, for each $s$, we consider the polynomial
\begin{align*}
\tilde{f}_{s}(X,Z,U,W,T)&:=\hat{f}_{s}(X,U,T)-g(Z,T)\sum_{l=0}^dW_{l}T^{l}.
\end{align*}
Note that $\tilde{f}_{s}(x,z,u,w,T)=0$ for some $w$ if and only if $g(z,T)$ divides $\hat{f}_{s}(x,u,T)$ in $F[T]$.
By taking coefficients with respect to the variable $T$,
we obtain a family of polynomials $h_{s,l}\in K[X,Z,U,W]$,
for $1\leq s\leq r$ and $0\leq l \leq d+k$,
such that
\begin{align*}
\tilde{f}_{s}(X,Z,U,W,T)&=\sum_{l=0}^{d+k}h_{s,l}(X,Z,U,W)T^{l}.
\end{align*}
We may define 
the required $(n+k)$-dimensional diophantine family $D$ over $K$ by writing
\begin{align*}
    D(F)&=\big\{(x,z)\in F^{n} \times F^{k}\;\big|\;\exists u\in F^{km},w\in F^{d+1}:h_{s,l}(x,z,u,w)=0\mbox{ for all }s,l\big\},
\end{align*}
for $F/K$.
\end{proof}

\begin{lemma}\label{lem:kill}
For every field extension $F/K$
and every $a\in F$, we have
\[
 \mathcal{S}_\mfp^\tau(F;a) = \bigcup_{\mathfrak{m}\in{\rm MaxSpec}(B_{a})}{\rm res}_{(B_{a}/\mathfrak{m})/F}(\mathcal{S}_\mfp^\tau(B_{a}/\mathfrak{m})),
\]
where 
${\rm res}_{E/F}$ denotes restriction of primes from $E$ to $F$, and
$B_a$ is the commutative $F$-algebra
$$
 F[T]\Big/\Big(t_\mfp a^e ((T^{q^f}-T)^2-1) - (T^{q^f}-T)\Big).
$$ 
\end{lemma}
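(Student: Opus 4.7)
The plan is to prove the two inclusions separately, viewing the statement as saying that a prime $\mfP \in \mathcal{S}_\mfp^\tau(F)$ satisfies $v_\mfP(a) \geq 0$ if and only if $\mfP$ is the restriction to $F$ of some prime in $\mathcal{S}_\mfp^\tau(B_a/\mathfrak{m})$ for some $\mathfrak{m} \in \mathrm{MaxSpec}(B_a)$.

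For the inclusion $(\supseteq)$, I would take $\mfP' \in \mathcal{S}_\mfp^\tau(B_a/\mathfrak{m})$ and let $\mfP$ denote its restriction to $F$. Multiplicativity of relative initial ramification and residue degree in the tower $F \subseteq B_a/\mathfrak{m}$ immediately puts $\mfP$ in $\mathcal{S}_\mfp^\tau(F)$. The key step is then to show $v_\mfP(a) \geq 0$. The case $a = 0$ is trivial, so assume $a \neq 0$. Writing $T$ for the image of the indeterminate, the defining relation of $B_a$ yields $(T^{q^f}-T)/((T^{q^f}-T)^2-1) = t_\mfp a^e$ in $B_a/\mathfrak{m}$ (neither factor can vanish without forcing $t_\mfp a^e = 0$), and raising to the $e$-th power gives $\gamma_{\mfp,t_\mfp}^\tau(T) = t_\mfp^{e-1} a^{e^2}$. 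Since $\mfP'$ has relative type at most $\tau$, a case analysis with Lemma \ref{lem:gamma.1} shows $\gamma_{\mfp,t_\mfp}^\tau(B_a/\mathfrak{m}) \subseteq \mathcal{O}_{\mfP'}$, so $e^2 v_{\mfP'}(a) \geq -(e-1)v_{\mfP'}(t_\mfp) \geq -(e-1)e$. Convexity of $\mathbb{Z}$ in the value group of $\mfP'$ then forces $v_{\mfP'}(a) \geq 0$, whence $v_\mfP(a) \geq 0$.

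For the reverse inclusion, I would start from $\mfP \in \mathcal{S}_\mfp^\tau(F;a)$ and build an extending prime via a Hensel lift. Since $a \in \mathcal{O}_\mfP$ and $v_\mfP(t_\mfp) \geq 1$, the defining polynomial $f(T) = t_\mfp a^e((T^{q^f}-T)^2-1) - (T^{q^f}-T)$ of $B_a$ lies in $\mathcal{O}_\mfP[T]$ and reduces modulo $\mfP$ to $\bar f(T) = T - T^{q^f} \in \bar F_\mfP[T]$. Because $f(\mfP|\mfp) \mid f$, the residue field $\bar F_\mfP$ embeds into $\mathbb{F}_{q^f}$, so every element of $\bar F_\mfP$ is a root of $\bar f$, and each such root $\alpha$ is simple since $\bar f'(\alpha) = 1 - q^f \alpha^{q^f-1} \equiv 1$ (using $p \mid q$). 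Hensel's lemma applied in the Henselization of $(F,\mfP)$ then produces a root $T_0$ of $f$, necessarily algebraic over $F$. The subfield $F(T_0)$ of the Henselization is a finite extension of $F$ isomorphic to $B_a/\mathfrak{m}$, where $\mathfrak{m}$ corresponds to the minimal polynomial of $T_0$; restricting the Henselian valuation to $F(T_0)$ yields a prime $\mfP'$ with $e(\mfP'|\mfP) = f(\mfP'|\mfP) = 1$ (the Henselization preserves value group and residue field), so $\mfP'$ has the same relative type over $\mfp$ as $\mfP$, placing it in $\mathcal{S}_\mfp^\tau(B_a/\mathfrak{m})$.

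The main technical point I anticipate is the Hensel-lifting step: $f$ is not monic, and its reduction has strictly smaller degree, but the standard form of Hensel's lemma for simple roots of the reduction still applies here, and the derivative condition $\bar f'(\alpha) \not\equiv 0$ reduces to the identity $q \equiv 0$ in characteristic $p$. The algebraic identity $\gamma_{\mfp,t_\mfp}^\tau(T) = t_\mfp^{e-1} a^{e^2}$ is the essential bridge linking the defining polynomial of $B_a$ to the integrality condition $v_\mfP(a) \geq 0$; everything else is bookkeeping with relative types and standard properties of the Henselization.
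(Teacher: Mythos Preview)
Your proof is correct and follows essentially the same approach as the paper: both directions use the same key ideas---integrality of the Kochen operator $\gamma_{\mfp,t_\mfp}^\tau$ on $\mathcal{S}_\mfp^\tau$-primes for $(\supseteq)$, and Hensel's lemma in the henselization applied to a simple root of $\bar g_a$ for $(\subseteq)$. The only cosmetic differences are that the paper argues $v_\mfP(t_\mfp a^e)>0$ directly from the valuation inequality $v_\mfP(\theta^{q^f}-\theta)>v_\mfP((\theta^{q^f}-\theta)^2-1)$ rather than via your explicit formula $\gamma_{\mfp,t_\mfp}^\tau(T)=t_\mfp^{e-1}a^{e^2}$, and it uses only the single simple root $T=0$ rather than observing that every element of $\bar F_\mfP$ is one.
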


\begin{proof}
Denote
 ${\rm MaxSpec}(B_{a})=\{\mathfrak{m}_1,\dots,\mathfrak{m}_r\}$ and
$E_i=B_{a}/\mathfrak{m}_i$.
Let 
$g_a = t_\mfp a^e ((T^{q^f}-T)^2-1) - (T^{q^f}-T) \in F[T]$
and note that $g_a$ is closely related to $\gamma_{\mfp,t_\mfp}^\tau$.

First let $\mfP\in\mathcal{S}_{\mfp}^{\tau}(E_{i})$ for some $i$.
If $\theta$ denotes the residue of $T$ in $E_i$, we have 
$\gamma_{\mfp,t_\mfp}^\tau(\theta)\in\mathcal{O}_\mfP$
 and therefore 
$v_\mfP({\theta^{q^f}-\theta})>v_\mfP({(\theta^{q^f}-\theta)^2-1})$, 
so since $g_a(\theta) = 0$ we necessarily have 
$v_\mfP(t_\mfp a^e) > 0$ and therefore $v_\mfP(a) \geq0$.

Conversely, let $\mfP \in \mathcal{S}_\mfp^\tau(F; a)$.
Then $g_a\in\mathcal{O}_\mfP[T]$ has a simple zero $T=0$ modulo the maximal ideal of $\mathcal{O}_\mfP$,
which implies that there exists some $i$ and $\mfQ\in\mathcal{S}_\mfp^\tau(E_i)$
with $\mfP={\rm res}_{E_i/F}(\mfQ)$: Indeed,
if $(F',v')$ is a henselization of $(F,v_\mfP)$,
then $v'=v_{\mfP'}$ for a prime $\mfP'$ of $F'$,
and Hensel's lemma in the form \cite[Theorem 4.1.3(4)]{EP05}
shows that $g_a$ has a zero in $F'$,
which induces an $F$-embedding $E_i\rightarrow F'$,
and one can take $\mfQ={\rm res}_{F'/E_i}(\mfP')$.
\end{proof}

\begin{theorem}\label{prp:2.dimensional.diophantine}
For every $N\in\mathbb{N}$ there exists a $2$-dimensional diophantine family $D_{\mfp,N}^{\tau}$ over $K$ such that
$$
    D_{\mfp,N}^{\tau}(F) = \left\{(x,a)\in F^2 \;|\; v_\mfP(x)\geq0\mbox{ for every }\mfP\in\mathcal{S}_\mfp^\tau(F;a) \right\}.
$$
for every extension $F/K$ with $\pi_\mfp^\tau(F)\leq N$.
\end{theorem}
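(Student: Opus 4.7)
The plan is to combine three previously-developed ingredients: Lemma \ref{lem:kill}, which reduces the set $\mathcal{S}_\mfp^\tau(F;a)$ to primes of a finite $F$-algebra; Theorem \ref{thm:p_pythagoras_in_fin_extn}, which bounds $\pi_\mfp^\tau$ in finite extensions; and Lemmas \ref{lem:algebra.quotient.fields} and \ref{lem:interpret.algebra}, which interpret conditions over finite-dimensional $F$-algebras as diophantine conditions in $F$.

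Set $k := 2q^f$. The polynomial
$$
h_a(T) := t_\mfp a^e((T^{q^f}-T)^2-1) - (T^{q^f}-T)
$$
has degree at most $k$ in $T$, so $B_a = F[T]/h_a(T)$ is an $F$-algebra of dimension at most $k$; hence every quotient of $B_a$ by a maximal ideal $\mathfrak{m}$ is a finite extension of $F$ of degree at most $k$. By Theorem \ref{thm:p_pythagoras_in_fin_extn}, setting $M := \max\{\alpha_\mfp^\tau(n,m) : n \leq N,\, m \leq k\}$, we have $\pi_\mfp^\tau(B_a/\mathfrak{m}) \leq M$ whenever $\pi_\mfp^\tau(F) \leq N$, and hence $R_\mfp^\tau(B_a/\mathfrak{m}) = R_{\mfp,M}^\tau(B_a/\mathfrak{m})$. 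Combining this with Lemma \ref{lem:kill}, the condition "$v_\mfP(x) \geq 0$ for every $\mfP \in \mathcal{S}_\mfp^\tau(F;a)$" becomes equivalent to "$x \in R_{\mfp,M}^\tau(B_a/\mathfrak{m})$ for every maximal ideal $\mathfrak{m}$ of $B_a$". Choosing polynomials $f_1,\dots,f_r \in K[X,Y_1,\dots,Y_m]$ that define the $1$-dimensional diophantine family $R_{\mfp,M}^\tau$ (Example \ref{ex:Gamma.n.diophantine}) and applying Lemma \ref{lem:algebra.quotient.fields} with $l = k$, this is in turn equivalent to the existence of $y \in B_a^m$ with $f_s(x,y)^k = 0$ in $B_a$ for each $s$.

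What remains is to express this last condition as a diophantine condition on $(x,a) \in F^2$; this is essentially Lemma \ref{lem:interpret.algebra}, except that the defining polynomial of $B_a$ is $h_a(T)$, which is parameterized by $a$ through coefficients in $K[a]$, rather than by an independent tuple of coefficients of a monic polynomial. The adaptation is routine: represent each $y_j = \sum_{i<k} u_{i,j}T^i$ using new variables $U_{i,j}$, introduce for each $s$ a tuple $W_s$ of new variables representing the coefficients of a polynomial $w_s(T) \in F[T]$ of sufficiently high degree, and impose the identity $f_s(x,y_1,\dots,y_m)^k = h_a(T)\cdot w_s(T)$ in $F[T]$. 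Equating coefficients of powers of $T$ produces a finite system of polynomial equations over $K$ in the variables $X, A, U, W_1, \dots, W_r$, whose existential closure over $U, W_1, \dots, W_r$ defines the required 2-dimensional diophantine family $D_{\mfp,N}^\tau$.

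The main technical issue is that $h_a$ is not monic of a fixed degree: at $a = 0$ its leading coefficient $t_\mfp a^e$ vanishes and the degree drops from $k$ to $q^f$. This is harmless: Lemma \ref{lem:algebra.quotient.fields} requires only a uniform upper bound on $\dim_F B_a$, which holds via $\dim_F B_a \leq k$ in all cases; and the polynomial identity above faithfully encodes divisibility by $h_a$ in $F[T]$ irrespective of the leading coefficient, since when $a \neq 0$ the redundant high-degree coefficients of $w_s$ are forced to zero by the equations, while when $a = 0$ they correspond to legitimate degrees of freedom in $w_s$.
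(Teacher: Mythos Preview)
Your proof is correct and follows essentially the same approach as the paper's: both combine Lemma~\ref{lem:kill}, Theorem~\ref{thm:p_pythagoras_in_fin_extn}, Lemma~\ref{lem:algebra.quotient.fields}, and the interpretation machinery of Lemma~\ref{lem:interpret.algebra} in the same order. Your treatment is in fact slightly more careful than the paper's on one point: the paper cites Lemma~\ref{lem:interpret.algebra} directly, although that lemma is stated only for algebras $B_z$ defined by a \emph{monic} polynomial with free coefficient tuple $z$, whereas $B_a$ is defined by the non-monic $h_a$ whose degree drops at $a=0$; you correctly spell out the straightforward adaptation (encoding divisibility by $h_a$ in $F[T]$ directly rather than reducing to the monic case) and verify that it behaves uniformly across the degenerate case $a=0$.
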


\begin{proof}
Let $l = 2q^f$.
By Theorem \ref{thm:p_pythagoras_in_fin_extn} there exists $N'$ such that
for all $E/F/K$ with $[E:F]\leq l$ and $\pi_\mfp^\tau(F)\leq N$, we have
$\pi_\mfp^\tau(E)\leq N'$,
and so
\begin{align}\label{eq:1}
R_{\mfp}^{\tau}(E)&=R_{\mfp,N'}^{\tau}(E).
\end{align}
By Example \ref{ex:Gamma.n.diophantine}, $R_{\mfp,N'}^{\tau}$ is a $1$-dimensional diophantine family over $K$,
and so we may choose polynomials
$f_{1},...,f_{r}\in K[X,Y_1,\dots,Y_m]$ such that
\begin{align}\label{eq:2}
R_{\mfp,N'}^{\tau}(F)&=\{x\in F\;|\;\exists y\in F^{m}:f_{1}(x,y)=...=f_{r}(x,y)=0\}
\end{align}
for all $F/K$.
For each $F/K$ with $\pi_{\mfp}^{\tau}(F)\leq N$, and each $a\in F$, we have
\begin{equation}
    \begin{aligned}
        &&F\cap P_{f_{1}^{l},...,f_{r}^{l}}(B_{a})&=\bigcap_{\mathfrak{m}\in\mathrm{MaxSpec}(B_{a})} (F\cap P_{f_{1},...,f_{r}}(B_{a}/\mathfrak{m}))&&\text{by Lemma \ref{lem:algebra.quotient.fields},}\\
        &&&=\bigcap_{\mathfrak{m}\in\mathrm{MaxSpec}(B_{a})} (F \cap R_{\mfp}^{\tau}(B_{a}/\mathfrak{m}))&&\text{by \eqref{eq:1} and \eqref{eq:2},}\\
        &&&=\bigcap_{\mfP\in\mathcal{S}_{\mfp}^{\tau}(F;a)}\mathcal{O}_{\mfP}&&\text{by Lemma \ref{lem:kill}},
    \end{aligned}
\label{eq:3}
\end{equation}
where $B_{a}$ is the $l$-dimensional algebra from Lemma \ref{lem:kill}.

By Lemma \ref{lem:interpret.algebra}, we may define a $2$-dimensional diophantine family $D$ over $K$ satisfying
\[ D(F) = \{ (x,a) \in F^2 \;|\ x \in P_{f^l_1, \dotsc, f^l_r}(B_a) \} \]
for every extension $F/K$.
By \eqref{eq:3}, for every $F/K$ with $\pi_\mfp^\tau(F) \leq N$ we in fact have
$$ D(F) = \big\{ (x,a) \in F^2 \;\big|\ x \in \bigcap_{\mfP\in\mathcal{S}_{\mfp}^{\tau}(F;a)}\mathcal{O}_{\mfP} \big\}, $$
proving the claim.
\end{proof}

\begin{corollary}\label{cor:holomorphy.diophantine}
 If $\pi_{\mfp}^{\tau}(F) < \infty$, then for every open-closed set $S \subseteq \mathcal{S}_\mfp^\tau(F)$, the holomorphy ring $\bigcap_{\mfP \in S} \mathcal{O}_\mfP$ is diophantine in $F$.
\end{corollary}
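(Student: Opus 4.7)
Suppose $\pi_\mfp^\tau(F)\leq N$. The strategy is to reduce $S$ to a finite Boolean combination of positive basic sets $\mathcal{S}_\mfp^\tau(F;a)$, and then apply an $s$-parameter generalization of Theorem \ref{prp:2.dimensional.diophantine} to each piece. Underlying the reduction is the compactness of $\mathcal{S}_\mfp^\tau(F)$ in the constructible topology, which holds because it is a closed subspace of the compact space $\mathrm{S}_{t_\mfp}^e(F)$ (as used in the proof of Theorem \ref{thm:approximation}).

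First, I would show that complements of positive basic sets are themselves positive basic sets: for every $a\in F^\times$,
$$\mathcal{S}_\mfp^\tau(F;a)^c=\bigcup_{n\geq 1}\mathcal{S}_\mfp^\tau\!\left(F;(a^n t_\mfp)^{-1}\right).$$
Indeed, $v_\mfP(a)<0$ implies $nv_\mfP(a)\leq -v_\mfP(t_\mfp)$ for some $n\in\mathbb{N}$: if $v_\mfP(a)\in\mathbb{Z}$ take $n=v_\mfP(t_\mfp)$, while if $v_\mfP(a)\notin\mathbb{Z}$ then $v_\mfP(a)$ lies below all of $\mathbb{Z}$ in the ordered value group (since $\mathbb{Z}$ is its smallest nontrivial convex subgroup), so $n=1$ suffices. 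These positive basic sets are nested increasing in $n$, so by compactness of the clopen $\mathcal{S}_\mfp^\tau(F;a)^c$ a finite subcover exists and hence the union equals $\mathcal{S}_\mfp^\tau(F;(a^N t_\mfp)^{-1})$ for some $N$.

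Since the Boolean algebra of clopens is generated by the subbasic sets, every clopen $S$ is a finite Boolean combination of positive basics and their complements, and by the previous paragraph every such combination simplifies to one involving only positive basics. Putting $S$ into disjunctive normal form yields
$$S=\bigcup_{i=1}^{m}\bigcap_{j=1}^{s_i}\mathcal{S}_\mfp^\tau(F;c_{i,j}),$$
and correspondingly $R_S=\bigcap_{i=1}^{m}R_{S_i}$, where $S_i=\bigcap_j\mathcal{S}_\mfp^\tau(F;c_{i,j})$. Since a finite intersection of diophantine sets is diophantine, it suffices to show that each $R_{S_i}$ is diophantine in $F$.

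For this, I would prove the following $s$-parameter analogue of Theorem \ref{prp:2.dimensional.diophantine}: for every $s\geq 1$, there exists an $(s+1)$-dimensional diophantine family $D^{(s)}_{\mfp,N}$ over $K$ whose value at $F$ (with $\pi_\mfp^\tau(F)\leq N$) equals $\{(x,a_1,\ldots,a_s)\in F^{s+1}:x\in\bigcap_{\mfP\in\bigcap_j\mathcal{S}_\mfp^\tau(F;a_j)}\mathcal{O}_\mfP\}$. The natural choice of algebra is $B_{a_1,\ldots,a_s}:=F[T_1,\ldots,T_s]/(g_{a_1}(T_1),\ldots,g_{a_s}(T_s))$, of $F$-dimension at most $(2q^f)^s$, where $g_a$ is as in Lemma \ref{lem:kill}. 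The main technical step, and the main obstacle, is the multi-parameter version of Lemma \ref{lem:kill}:
$$\bigcap_{j=1}^{s}\mathcal{S}_\mfp^\tau(F;a_j)=\bigcup_{\mathfrak{m}\in\mathrm{MaxSpec}(B_{a_1,\ldots,a_s})}\mathrm{res}_{(B_{a_1,\ldots,a_s}/\mathfrak{m})/F}\bigl(\mathcal{S}_\mfp^\tau(B_{a_1,\ldots,a_s}/\mathfrak{m})\bigr),$$
which follows by iterating the Hensel's lemma argument from Lemma \ref{lem:kill} in a henselization of $(F,\mfP)$, producing compatible roots of each $g_{a_j}$ simultaneously. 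The remaining steps mirror the proof of Theorem \ref{prp:2.dimensional.diophantine}: bound $\pi_\mfp^\tau$ on the quotient fields $B_{a_1,\ldots,a_s}/\mathfrak{m}$ (which have degree at most $(2q^f)^s$ over $F$) by Theorem \ref{thm:p_pythagoras_in_fin_extn}, raise the defining polynomials to the power $(2q^f)^s$ in the spirit of Lemma \ref{lem:algebra.quotient.fields}, and apply a routine multi-variable extension of Lemma \ref{lem:interpret.algebra} to encode $B_{a_1,\ldots,a_s}$ diophantinely.
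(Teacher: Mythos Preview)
Your argument is correct, but the paper's proof is dramatically shorter: it simply cites \cite[Lemmas 10.4, 10.5]{Feh13} for the fact that \emph{every} clopen subset of $\mcS_\mfp^\tau(F)$ is already of the form $\mcS_\mfp^\tau(F;a)$ for a single $a\in F$, and then applies Theorem~\ref{prp:2.dimensional.diophantine} together with Lemma~\ref{lem:dioph_section} directly. In other words, the positive basic sets already form a Boolean algebra, not merely a generating set for one.

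You essentially rediscover half of this structural fact (closure under complements) but not the other half (closure under finite intersections), and you compensate by building an $s$-parameter extension of the whole machinery of Section~6. That extension is sound---the tensor-product algebra $B_{a_1,\dots,a_s}$ behaves as you say, and the iterated Hensel argument goes through---so your route does reach the goal. It even yields a little more than the paper states, namely a uniform $(s+1)$-dimensional diophantine family in the parameters $a_1,\dots,a_s$. But for the corollary as written, invoking the single-parameter result from \cite{Feh13} avoids all of the multi-variable generalizations of Lemmas~\ref{lem:algebra.quotient.fields}, \ref{lem:interpret.algebra} and~\ref{lem:kill}.
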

\begin{proof}
  As $S$ is open-closed, it is of the form $\mathcal{S}_\mfp^\tau(F; a)$ for some $a \in F$, see \cite[Lemmas 10.4, 10.5]{Feh13}.
  Hence the claim follows from Theorem \ref{prp:2.dimensional.diophantine} and Lemma \ref{lem:dioph_section}.
\end{proof}

By Example \ref{Ex:PpC} this applies in particular to pseudo p-adically closed fields like $\mathbb{Q}^{{\rm t}p}$, although for such fields there are in fact simpler ways of establishing Theorem \ref{thm:p_pythagoras_in_fin_extn}.

\section*{Acknowledgements}

Some of this work was completed while the authors were participating in the
{\em 
Model Theory, Combinatorics and Valued fields}
trimester at the Institut Henri Poincar\'{e},
and they would like to extend their thanks to the organisers.
They would like to thank Florian Pop for discussions on the $p$-Pythagoras number.
The results of Section 4 
were in this generality first obtained, in a different formulation, in P.~D.'s doctoral 
thesis \cite{thesis}, during the research for which he was supported by Merton 
College Oxford and the University of Oxford Clarendon Fund.
S.~A.\ was also supported by The Leverhulme Trust
under grant RPG-2017-179.
A.~F.~was funded by the Deutsche Forschungsgemeinschaft (DFG) - 404427454.

\bibliographystyle{plain}

\end{document}